\documentclass[11pt]{article}
\usepackage{amsmath,amsthm,amsfonts,amssymb,amscd, amsxtra, mathrsfs,textcomp,verbatim,inputenc}
\usepackage{url}
\usepackage{xcolor}
\usepackage{mathtools}
\usepackage{todonotes}
\usepackage{cancel}
\usepackage{booktabs}
\usepackage[shortlabels]{enumitem}
\usepackage{hyperref}

\usepackage{soul} 

\usepackage[margin=2.5 cm,nohead]{geometry}
\usepackage{algorithm}
\theoremstyle{plain}
\theoremstyle{definition}
\newtheorem{theorem}{Theorem}
\newtheorem{lemma}[theorem]{Lemma}
\newtheorem{definition}[theorem]{Definition}
\newtheorem{corollary}[theorem]{Corollary}
\newtheorem{proposition}[theorem]{Proposition}

\newtheorem{remark}{Remark}
\newtheorem{example}{Example}

\newtheorem{appendixthm}{Appendix}

\newtheorem*{properties*}{Properties of subdifferential}

\begin{document}

\title{A subdifferential characterization via Busemann functions and applications to DC optimization on Hadamard manifolds}
\author{
 O. P. Ferreira\thanks{Institute of Mathematics and Statistics, Federal University of Goias, Avenida Esperan\c{c}a, s/n, Campus II,  Goi\^ania, GO - 74690-900, Brazil (E-mail: {\tt orizon@ufg.br}, {\tt mauriciolouzeiro@ufg.br}).}
 \and
D. S. Gonçalves \thanks{Departamento de Matemática, Universidade Federal de Santa Catarina, Florianópolis, SC 88040-900, Brazil.
(E-mail:  {\tt douglas.goncalves@ufsc.br})}
\and
M. S. Louzeiro  \footnotemark[1]
\and
S. Z. N\'emeth \thanks{School of Mathematics, University of Birmingham, Watson Building, Edgbaston, Birmingham - B15 2TT, United Kingdom
(E-mails: {\tt s.nemeth@bham.ac.uk}, {\tt jxz755@bham.ac.uk}).}
\and
J. Zhu \footnotemark[3]
}

\maketitle
\begin{abstract}
This paper investigates the properties of Busemann functions on Hadamard manifolds and their use in optimization algorithms in Riemannian settings. We present a new Busemann-based characterization of the subdifferential, which is particularly well suited to Riemannian optimization. In the classical Hadamard manifold framework, a subgradient provides a global lower model of a convex function expressed through the inverse exponential map. However, this model may fail to exhibit a useful convexity or concavity structure. By contrast, our characterization yields a concave bounding function by exploiting key properties of Busemann functions. We  use  this concavity to design and analyze difference-of-convex (DC) optimization methods on Hadamard manifolds. In particular, we reformulate the classical DC algorithm (DCA)  for Riemannian contexts and study its  convergence properties. We also report preliminary numerical experiments comparing the proposed Busemann DCA, which leads to geodesically convex subproblems, with the classical Riemannian DCA.

\noindent
{\bf Keywords:}  Hadamard manifolds, Busemann functions, subgradients, difference of convex algorithm.
\medskip

\noindent{\bf AMS subject classification:} \,90A30\,$\cdot$\, 90A26
\end{abstract}

\section{Introduction}
The Busemann functions plays a crucial role in geometric topology, particularly in the analysis of manifolds like Hadamard manifolds. Introduced by H. Busemann in \cite{Busemann1955}, this concept captures the essence of the parallel axiom, facilitating the examination of geodesic geometry and offering insights into the behavior as they extend towards infinity. Its significance lies in its ability to reveal fundamental aspects of the overall structure of space. As an essential element in the study of Hadamard manifolds, the Busemann functions is expected to provide deeper insights as research progresses. To corroborate this, the discussion presented here demonstrates how this function can aid in the design and analysis of optimization algorithms for solving problems in Riemannian settings. For further exploration of the Busemann functions's role in geometry, refer to works such as \cite{Ballmann1985, BridsonHaefliger1999, Sakai1996}.

Given the well-established connections between Riemannian geometry and optimization, as evidenced by books  such as \cite{Absil2008, Boumal2020,
Rapcsak1997, Udriste1994}, and acknowledging the  fundamental  role of the Busemann functions in Riemannian geometry, it is natural to anticipate
its significance extending into the field of optimization. Indeed, the Busemann function have begun to attract attention from the optimization research community operating within the Hadamard manifolds  context. This attention has sparked efforts to delve deeper into Busemann functions, exploring their potential applications and theoretical implications across continuous optimization and machine learning domains. For instance, in \cite{Bento2022, BentoNeto2024}, the concept of a resolvent for equilibrium problems is introduced in terms of Busemann functions, alongside an investigation into elements of convex analysis in Hadamard manifold settings. Furthermore, \cite{Bento2023} presents a Fenchel-type conjugate utilizing Busemann functions, accompanied by an exploration of several properties. Additionally, \cite{hirai2023} discusses further developments in a
convex analysis foundation for  convex optimization on Hadamard spaces, incorporating the concept of the Legendre-Fenchel conjugate. In  \cite{lewis2024} a subgradient-type algorithm is proposed for exploring convex optimization on Hadamard spaces with the iteration based on the convexity of the sublevel sets of the Busemann functions.   Although immediate practical applications are not our primary focus, we note that \cite{bonet2023} introduces hyperbolic sliced-Wasserstein discrepancies using machine-learning tools. These discrepancies are constructed by projecting onto geodesics, for instance through projections defined by level sets of Busemann functions. This context also encourages further exploration of the Busemann functions, as discussed in another related work \cite{bonet2023-2}. Additionally, \cite{ghadimi2021} proposes a method termed hyperbolic Busemann learning for a classification problem, which aims to apply hierarchical relations among class labels to strategically position hyperbolic prototypes.

 In this paper, we introduce a novel characterization of the subdifferential of a convex function, a fundamental concept in nonsmooth optimization theory, based on Busemann functions. This characterization yields structural properties that are more suitable for optimization on Hadamard manifolds than those provided by the classical definition. Traditionally, in the Riemannian setting, the classical subgradient inequality guarantees that a geodesically convex function admits a global lower bound expressed through a subgradient and the inverse exponential map. In the Euclidean case, the corresponding lower model is affine and therefore is both convex and concave. However, on a general Hadamard manifold, the lower model induced by this classical definition may fail to be geodesically convex and may also fail to be geodesically concave. This notion, introduced in \cite[Definition~4.3, p.~73]{Udriste1994} and widely used in works such as \cite{FerreiraLouzeiroPrudente2018subgrad,LiMordukhovich2011,WangLiWangYao2015}, motivates the search for support constructions with a more favorable geometric structure. In contrast, our Busemann-based characterization yields a concave bounding function obtained from the negative of a Busemann function. This choice provides a convenient geometric structure for algorithmic design on Hadamard manifolds and suggests further directions for investigation beyond the scope of the present work. In particular, we use these Busemann-based bounds to address difference-of-convex (DC) optimization problems on Hadamard manifolds and to develop optimization methods suited to the Hadamard manifold framework. For instance, we explore our Busemann-based characterization to address the DC optimization problem on Hadamard manifolds. We examine the potential of the Busemann functions in optimization, highlighting its role in developing optimization methods designed to the Hadamard manifold framework. Specifically, we revisit the classic difference of convex algorithm (DCA) for Hadamard manifolds, initially introduced and analyzed in \cite{Bergmann2024} as the Hadamard manifold  counterpart to the celebrated Euclidean difference of convex algorithm (EDCA) introduced in \cite{Pham1986}, recently studied in \cite{Aragon2020}. However,  it is important to note  that the function involved in the subproblem of the classic DCA on Hadamard manifolds is generally \emph{not} convex, presenting significant solution-seeking challenges.  To overcome this limitation, we use the geometric structure provided by Busemann functions. Consequently, we propose a reformulation of the classical DCA in which the subproblem becomes geodesically convex, enabling a more effective treatment within the Riemannian setting.  
  
The paper is structured as follows. In Section~\ref{sec:int.01}, we establish some notations.  Section~\ref{sec:int.r} serves as a review of essential concepts, notations, and foundational results concerning Hadamard manifolds. In Section~\ref{sec:bfNewDef}, we delve into the Busemann functions on Hadamard manifolds, elucidating crucial properties, providing necessary notations, and offering illustrative examples.  Following this, in Section~\ref{sec:NewDefSub}, we introduce a novel characterization for subdifferential  based on the Busemann functions.  In Section~\ref{sec:OptProhm}, we present a Busemann-based algorithm for DC optimization on Hadamard manifolds, motivated by the use of Busemann supports as geometric  model  for linearization and we begin by revisiting the classical DCA in this setting.  In Section~\ref{sec:Numerics}, we present numerical experiments comparing the practical performance of the Busemann DCA with the classic Riemannian DCA. Finally, Section~\ref{sec:Conclusions} presents our conclusions.

\subsection{Notation} \label{sec:int.01}
 Let ${\mathbb R}^{m}$ be   the $m$-dimensional Euclidean space.  The set of all $m
 \times n$ matrices with real entries is denoted by ${\mathbb R}^{m \times n}$
 and ${\mathbb R}^m\equiv {\mathbb R}^{m\times 1}$.   For   $M \in {\mathbb
 R}^{m\times n}$ the matrix $M^{{T}}  \in {\mathbb R}^{n\times m}$  denotes  the
 {\it transpose} of $M$.     
 The matrix ${\rm I}$ denotes the $n\times n$ identity matrix. 
 Given $v \in \mathbb{R}^n$, ${\rm Diag}(v)$ denotes the $n \times n$ diagonal matrix with the entries of $v$ in its diagonal. 
 Denote by $\mathbb{R}^n_{++}$ the positive  orthant. 
 Let $\overline{\mathbb{R}}\coloneqq  \mathbb{R}\cup\{+\infty\}$ be the extended real line.   
 In line with  \cite{Aragon2020}, we will adopt the following conventions
\begin{align}
    \label{eq:conventions}
    (+\infty)-(+\infty) = +\infty,\quad
    (+\infty)-\lambda = +\infty, \text{ and }
    \lambda- (+\infty) = -\infty,
\end{align}
for all $\lambda \in \mathbb{R}$. The \emph{domain} of  $f:{\mathbb M} \to \overline{\mathbb{R}}$ is denoted  by ${\rm dom}\, f  \coloneqq \{ p\in {\mathbb M}:~f(p) < +\infty\}$.   Throughout this paper, we assume that ${\rm dom }\, f \neq \varnothing$, indicating  that  {\it $f$ is proper}.
\section{Basics results about Hadamard  manifolds} \label{sec:int.r}
In this section, we recall some concepts, notations, and basics results about Hadamard manifolds.
For more details see  for example, \cite{Manfredo1992, Sakai1996}.
{\it Throughout  this paper  ${\mathbb M}$ represents a finite dimensional Hadamard manifold} and
 $T_p{\mathbb M}$ the \emph{tangent space} of ${\mathbb M}$ at $p$.
The corresponding norm associated to the Riemannian metric $\langle \cdot , \cdot \rangle$
is {represented} by $\lVert\cdot\rVert$. We use $\ell(\gamma)$ to {express} the length of a piecewise smooth curve
$\gamma\colon [a,b] \rightarrow {\mathbb M}$. The Riemannian distance between $p$
and $q$ in ${\mathbb M}$ is  denoted by $d(p,q)$,
which induces the original topology on ${\mathbb M}$, namely, $({\mathbb M}, d)$,
which is a complete metric space.  The {\it exponential mapping}
$\exp_{p}:T_{p}{\mathbb M} \rightarrow  {\mathbb M} $ is defined  by $\exp_{p}v\,=\, \gamma _{p,v}(1)$, where $\gamma _{p,v}$ is the geodesic defined by its  initial position $p$, with velocity $v$ at $p$. Hence, we have $\gamma _{p,v}(t)=\exp_{p}(tv)$. Thus, {\it we will also use the expression $\exp_{p}(tv)$ for denoting  the geodesic   $\gamma_{{p}, v}$ starting  at $p\in {\mathbb M}$ with velocity $v\in T_p{\mathbb M}$ at $p$}.  For a $p\in{\mathbb M}$, the exponential map $\exp_p$ is a diffeomorphism and $\log_p\colon{\mathbb M}\to T_p{\mathbb M}$ {indicates} its inverse.  In this case, $d(p,q) = \|\log_pq\|$ holds,  $d(\cdot, q)\colon{\mathbb M}\backslash \{q\}\to\mathbb{R}$  is $C^{\infty}$  for all $q\in {\mathbb M}$ and  its gradient is given by ${\rm grad}_{1} d(p, q) = (-\log_pq)/d(q, p)$, for all $q\neq p$, where ${\rm grad}_{1}$ denotes the gradient with respect to first coordinate. In addition,  $d^2(\cdot, q)\colon{\mathbb M}\to\mathbb{R}$  is $C^{\infty}$  for all $q\in {\mathbb M}$, and  ${\rm grad}_{1} d^2(p, q) = -2\log_pq$.   Let $\bar{p},\bar{q}\in {\mathbb M}$ and $(p_{k})_{k\in \mathbb{N}}, (q_{k})_{k\in \mathbb{N}}\subset {\mathbb M}$ be sequences  such that $\lim_{k\to +\infty} p_k=\bar{p}$ and $\lim_{k\to +\infty}q_k=\bar{q}$. Then, for any $q\in \mathbb{M}$,  $\lim_{k\to +\infty} \log_{p_k}q=  \log_{\bar{p}}q$ and $\lim_{k\to +\infty} \log_qp_k= \log_q\bar{p}$ and   $\lim_{k\to +\infty}  \log_{p_k}q_k=\log_{\bar{p}}\bar{q}$.  Given $p,q\in{\mathbb M}$,  {the symbol $\gamma_{pq}$ indicates} the geodesic segment  joining  $p$ to $q$, i.e., $\gamma_{pq}\colon[0,1]\rightarrow{\mathbb M}$ with $\gamma_{pq}(0)=p$ and $\gamma_{pq}(1)=q$.    In the following, we will recall the well-known ``comparison theorem" for triangles in Hadamard manifolds, as stated in \cite[Proposition 4.5]{Sakai1996}. 
 \begin{lemma}  \label{le:CosLawF}
Let ${\mathbb M}$ be a  Hadamard  manifold. The following inequality  holds:
 \begin{equation} \label{eq:coslaw2}
d^2({x}, {y})+d^2({x},{z})-2\left\langle  \log_{{x}}{y}, \log_{{x}}{z}\right\rangle\leq d^2({y},{z}),  \qquad   \forall {x}, {y}, {z} \in {\mathbb M}. 
\end{equation}
As a consequence,
\begin{equation} \label{eq:log-norm-ineq}
\left\| \log_{x} y - \log_{x} z \right\| \leq d(y, z), \qquad \forall x, y, z \in {\mathbb M}.
\end{equation}
Moreover, if the sectional curvature of ${\mathbb M}$ is identically zero, then both inequalities \eqref{eq:coslaw2} and \eqref{eq:log-norm-ineq} hold as equalities.
\end{lemma}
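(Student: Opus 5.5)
The plan is to obtain \eqref{eq:coslaw2} from the Rauch/Toponogov-type fact that $\exp_x$ does not decrease distances on a manifold of nonpositive curvature, and then deduce \eqref{eq:log-norm-ineq} by algebra. The steps are:
\begin{enumerate}
\item Prove the expansion property for $\exp_x$.
\item Apply it to the pair $u=\log_x y$, $v=\log_x z$ to get \eqref{eq:coslaw2}.
\item Rewrite \eqref{eq:coslaw2} as \eqref{eq:log-norm-ineq}.
\item Treat the flat case separately to obtain equalities.
\end{enumerate}

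\textbf{Step 1: expansion of $\exp_x$.} We show that for all $u,v\in T_x{\mathbb M}$,
\[
\|u-v\|\le d(\exp_x u,\exp_x v).
\]
Take the geodesic $\gamma$ joining $\exp_x u$ to $\exp_x v$. Since $\exp_x$ is a diffeomorphism, $\gamma=\exp_x\circ c$ for a unique smooth curve $c$ in $T_x{\mathbb M}$, and $c$ joins $u$ to $v$. It then suffices to show $\ell(c)\le \ell(\gamma)$, because $\|u-v\|\le \ell(c)$. This reduces to the pointwise estimate
\[
\|d(\exp_x)_w\,\xi\|\ge \|\xi\|, \qquad w,\xi\in T_x{\mathbb M}.
\]
To prove it, split $\xi$ into its component along $w$ and its component orthogonal to $w$.
\begin{itemize}
\item The radial component is preserved, by the Gauss lemma.
\item The orthogonal component is the value at $t=1$ of a Jacobi field $J$ along $t\mapsto\exp_x(tw)$ with $J(0)=0$, $J'(0)=\xi^\perp$.
\end{itemize}
When the sectional curvature is $\le 0$, the function $\|J\|^2$ is convex, since $(\|J\|^2)''=2\|J'\|^2-2\langle R(J,\dot\gamma)\dot\gamma,J\rangle\ge 0$. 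A finer Rauch comparison against the flat Jacobi field $t\,\xi^\perp$ gives $\|J(1)\|\ge\|\xi^\perp\|$. By the Gauss lemma the two images remain orthogonal, so the Pythagorean theorem yields the pointwise estimate. This is the main obstacle. The cleanest route is simply to cite the Rauch comparison theorem in \cite{Manfredo1992} rather than redo the Jacobi field argument.

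\textbf{Steps 2 and 3: the inequalities.} With $u=\log_x y$ and $v=\log_x z$ we have $\|u\|=d(x,y)$ and $\|v\|=d(x,z)$. Expanding the square gives
\[
\|u-v\|^2=d^2(x,y)+d^2(x,z)-2\langle \log_x y,\log_x z\rangle.
\]
Step 1 bounds the left side by $d^2(y,z)$, which is exactly \eqref{eq:coslaw2}. The same identity read in reverse says that \eqref{eq:coslaw2} is equivalent to $\|\log_x y-\log_x z\|^2\le d^2(y,z)$. Taking square roots gives \eqref{eq:log-norm-ineq}.

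\textbf{Step 4: the flat case.} If the sectional curvature vanishes identically, Jacobi fields along geodesics satisfy $J''=0$, so $J(t)=t\,J'(0)$. Hence $\|d(\exp_x)_w\xi\|=\|\xi\|$ for all $w,\xi$, and $\exp_x$ is a local isometry. Since it is also a diffeomorphism, it is a global isometry from the Euclidean space $T_x{\mathbb M}$ onto ${\mathbb M}$, so $d(\exp_x u,\exp_x v)=\|u-v\|$. All the inequalities above then become equalities.
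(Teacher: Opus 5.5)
Your proof is correct. The paper does not prove this lemma itself. It quotes \eqref{eq:coslaw2} from the triangle comparison theorem \cite[Proposition 4.5]{Sakai1996}, obtains \eqref{eq:log-norm-ineq} by expanding $\|\log_x y-\log_x z\|^2$, and leaves the flat case unjustified.

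You run the logic in the opposite direction. You first show that $\exp_x$ does not decrease distances, using the Gauss lemma and Rauch comparison against Euclidean space. That is \eqref{eq:log-norm-ineq} itself, and you then get \eqref{eq:coslaw2} from the same expansion of the square. This is essentially the standard proof behind the cited comparison theorem, so your version is self-contained where the paper's is by reference. It also supplies the flat-case argument the paper omits: vanishing curvature makes $\exp_x$ a bijective Riemannian isometry, hence distance-preserving.

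One caution on the Jacobi-field step. Convexity of $\|J\|^2$ alone yields nothing, because $\|J\|^2$ and its derivative both vanish at $t=0$, so convexity only gives $\|J(t)\|^2\ge 0$. What works is convexity of $\|J\|$ itself on $(0,1]$, where $J\neq 0$ since there are no conjugate points. Combined with $\|J(t)\|/t\to\|J'(0)\|$ as $t\to 0^+$, this gives $\|J(1)\|\ge\|J'(0)\|$. Alternatively, cite the Rauch theorem directly, as you propose. Also, in the flat case, $J(t)=tJ'(0)$ should be read as $t$ times the parallel transport of $J'(0)$ along the geodesic.
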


\begin{definition} \label{def:Lip}
Let $ \mathbb{M} $ be a Hadamard manifold, and let $ f \colon \mathbb{M} \rightarrow \overline{\mathbb{R}} $ be a function. 
We say that $ f $ is \emph{$ L $-Lipschitz} on a subset $ \Omega \subset \mathbb{M} $, for some constant $ L \geq 0 $, if $|f(p) - f(q)| \leq L\, d(p,q), $
for all  $p, q \in \Omega \cap \operatorname{dom}\, f$, where $ d $ denotes the Riemannian distance on $ \mathbb{M} $.
\end{definition}
The following definition plays an important role in the paper (see \cite[p. 363]{Bourbaki1995}).

\begin{definition} \label{eq:lscf}
A function $f\colon {\mathbb M} \to \overline{\mathbb{R}}$ is said to be
\emph{lower semi-continuous} (\emph{lsc}) at a point $p \in {\mathbb M}$ if $\liminf_{q \to p} f(q) =f(p)$. If $f$ is lower semi-continuous at every point in ${\mathbb M}$, we simply say that $f$ is \emph{lower semi-continuous}.
\end{definition} 
We conclude this section with a useful property of lower semi-continuous functions on Hadamard manifolds, as stated in the next proposition. Its proof is similar to that of the Euclidean space and will be omitted here.
\begin{proposition}\label{prop:CoeFunc} 
	Let ${\mathbb M}$ be a Hadamard manifold  and
	$f\colon{\mathbb M}\to \overline{\mathbb{R}}$ be a lower semi-continuous function.  If $f$ is coercive, i.e.,  $ \lim_{d({p},{\bar p})\to+\infty } {f(p)} = +\infty$, for some fixed ${\bar p}\in {\mathbb M}$, then  $f$ has a global minimizer in ${\mathbb M}$.
\end{proposition}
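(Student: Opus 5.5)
The plan is the standard direct-method argument, adapted to the metric structure of ${\mathbb M}$. The two ingredients are that closed bounded sets in the finite-dimensional complete manifold ${\mathbb M}$ are compact (Hopf--Rinow), and that lower semi-continuity, in the sense of the definition above, makes sublevel sets closed. Since $f$ is proper, I fix $p_0\in{\rm dom}\, f$ and set $\alpha\coloneqq f(p_0)<+\infty$. Coercivity with respect to $\bar p$ then supplies a radius $R>0$ such that $f(p)>\alpha$ whenever $d(p,\bar p)>R$. Consequently the sublevel set $S\coloneqq\{p\in{\mathbb M}: f(p)\le\alpha\}$ is nonempty, since it contains $p_0$, and it lies in the closed ball $B[\bar p,R]$.

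Next I show that $S$ is closed. Take $q_k\in S$ with $q_k\to q$. By lower semi-continuity, $f(q)=\liminf_{p\to q}f(p)\le\liminf_{k}f(q_k)\le\alpha$, so $q\in S$. Here I use only the inequality $f(q)\le\liminf_{p\to q} f(p)$ that the definition contains. I also take the liminf to include the value at $q$ itself; even if it does not, the sequence inequality still holds, either directly when $q_k=q$ infinitely often, or via the punctured liminf otherwise. Since $({\mathbb M},d)$ is a complete, finite-dimensional Riemannian manifold, Hopf--Rinow gives that the closed bounded set $S\subset B[\bar p,R]$ is compact.

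To conclude, I set $m\coloneqq\inf_{S}f\in[-\infty,\alpha]$ and choose a minimizing sequence $(p_k)\subset S$ with $f(p_k)\to m$. By compactness, a subsequence converges to some $p^*\in S$. Lower semi-continuity then gives $f(p^*)\le\liminf f(p_k)=m$. Since $f(p^*)$ is a value of a function into $\overline{\mathbb{R}}$, it is greater than $-\infty$, so $m$ is finite and $f(p^*)=m$. Finally, for $p\notin S$ we have $f(p)>\alpha\ge m$, so $p^*$ is a global minimizer of $f$ on all of ${\mathbb M}$.

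The only step needing real care is the compactness of closed balls. This is where the finite dimensionality and completeness of ${\mathbb M}$ enter, through Hopf--Rinow. Alternatively, it follows from the fact that $\exp_{\bar p}$ is a diffeomorphism mapping the compact Euclidean ball of radius $R$ in $T_{\bar p}{\mathbb M}$ onto $B[\bar p,R]$, because $d(\bar p,\exp_{\bar p}v)=\|v\|$. I would use this second, self-contained argument.
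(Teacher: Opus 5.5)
Your proposal is correct and matches what the paper has in mind: the paper omits the proof, saying only that it is similar to the Euclidean one. Your direct-method argument is exactly that adaptation. The sublevel set $\{f\le f(p_0)\}$ is bounded by coercivity and closed by lower semi-continuity, hence compact, which you get via $\exp_{\bar p}$ or Hopf--Rinow. A minimizing sequence in it then has a cluster point, which is a global minimizer by lower semi-continuity. Your handling of the punctured versus non-punctured $\liminf$ in the paper's equality-form definition of lsc is a nice extra touch.
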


Perhaps the two most important examples of Hadamard manifolds in optimization applications, apart from the Euclidean space $\mathbb{R}^n$, are the $\kappa$-hyperbolic space form ${\mathbb H}^{n}_{\kappa}$ and the space of symmetric positive definite matrices $\mathcal{P}(n)$. In the next two sections, we provide a brief review of their key properties, which will serve as the foundation for the examples and numerical experiments developed in this work.
\subsection{Basic results on the $\kappa$-hyperbolic space form} \label{sec:int.1}

In this section, we provide a review of the  basic results related to the geometry of the ${\kappa}$-hyperbolic space forms. References to this section include  \cite{BenedettiPetronio1992, Boumal2020, Ratcliffe2019}. For a given $\kappa>0$,  the {\it $n$-dimensional  ${\kappa}$-hyperbolic space form} and its {\it tangent hyperplane at a point $p$} are denoted  by
\begin{equation*}
{\mathbb H}^{n}_{\kappa}:=\Big\{ p\in {{\mathbb R}^{n+1}}:~\langle p, p\rangle=-\frac{1}{\kappa}, ~p_{n+1}>0\Big\}, \qquad T_{p}{{\mathbb H}^n_{\kappa}}:=\left\{v\in {{\mathbb R}^{n+1}}\, :\, 
\left\langle p, v \right\rangle=0\right\},
\end{equation*}
where,  $\left\langle \cdot , \cdot \right\rangle$ is the   {\it Lorentzian inner product}   $\left\langle x, y\right\rangle:= x^{{T}}{\rm J}y$ and ${\rm J}:={\rm Diag}(1, \ldots,1, -1) \in {\mathbb R}^{(n+1)\times (n+1)}.$    The  {\it  Lorentzian projection into} $T_p{{\mathbb H}^n_{\kappa}}$ is the linear mapping ${\rm Proj}^{\kappa}_p:{{\mathbb R}^{n+1}} \to T_p{{\mathbb H}^n_{\kappa}}$  defined by
$
 {\rm Proj}^{\kappa}_p x:=x+ \kappa \langle p, x \rangle p,
$
 i.e., $ {\rm Proj}^{\kappa}_p:={\rm I} +\kappa pp^{{T}}{\rm J} $, where ${\rm I}  \in
 {\mathbb R}^{(n+1)\times (n+1)}$ is the identity matrix. The {\it intrinsic distance on the  $\kappa$-hyperbolic space form} between two  points $p, q \in {{\mathbb H}^n_{\kappa}}$  is  given~by
\begin{equation} \label{eq:Intdist}
d_{\kappa}(p, q):=\frac{1}{\sqrt{\kappa}}{\rm arcosh} (-\kappa \left\langle p , q\right\rangle).
\end{equation}
The {\it exponential mapping} $\exp^{\kappa}_{q}:T_{q}{{\mathbb H}^n_{\kappa}} \rightarrow {{\mathbb H}^n_{\kappa}}$ is given by  $\exp^{\kappa}_{q}v=q$ for  $v=0$,  and 
\begin{equation*} 
\exp^{\kappa}_{q}v:= \displaystyle \cosh({\sqrt{\kappa}}\|v\|) \,q+ \sinh(\sqrt{\kappa}\|v\|)\, \frac{v}{{\sqrt{\kappa}}\|v\|},
\qquad  \forall v\in T_q{{\mathbb H}^n_{\kappa}}\setminus\{0\}.
\end{equation*} 
The {\it inverse of the exponential mapping}  $\log^{\kappa}_{q}:{{\mathbb H}^n_{\kappa}} \to T_{q}{{\mathbb H}^n_{\kappa}}$ at ${q\in {\mathbb H}^n_{\kappa}} $ is given by $\log^{\kappa}_{q}p=0$, for $p=q$, and 
\begin{equation*} 
\log^{\kappa}_{q}p:=  \displaystyle \frac{\sqrt{\kappa}d_{\kappa}(q, p)}{\sqrt{{\kappa}^2\left\langle q, p\right\rangle^2-1}} 
{\rm Proj}^{\kappa}_qp= \displaystyle  d_{\kappa}(q, p)\frac{{\rm Proj}^{\kappa}_qp}{\|{\rm Proj}^{\kappa}_qp\|},  \qquad  p\neq q.
\end{equation*}
Let  $\Omega \subset {\mathbb H}^{n}_{\kappa}$ be an open set.  The {\it Riemannian gradient on the ${\kappa}$-hyperbolic space form} of a differentiable function   $f: \Omega \to  {\mathbb R}$  is the unique vector field $\Omega \ni p\mapsto {\rm grad} f(p)\in T_{p}{\mathbb M}$ such that $df(p)v=\left\langle {\rm grad} f(p), v\right\rangle$, see \cite[Proposition~7-5, p.162]{Boumal2020}.  Therefore, we have
\begin{equation} \label{eq:grad}
{\rm grad} f(p):= {\rm Proj}^{\kappa}_p {\rm J}f'(p)=  {\rm J}f'(p)+ \kappa \left\langle {\rm J}f'(p), p\right\rangle \,p,
\end{equation}
where $f'(p) \in {\mathbb R}^{n+1}$ is the usual gradient of  $ f$ at $p$.

\subsection{Basic results on the manifold of symmetric positive definite matrices} \label{sec:int.2}
In this section, we provide a review of the  basic results related to the geometry of the manifold of symmetric positive definite matrices. References to this section include  \cite{Boumal2020, BridsonHaefliger1999}. Let $ \mathbb{R}^{n\times m} $ denote the space of real matrices of size $ n \times m $, $ \mathcal{S}(n) \subset \mathbb{R}^{n\times n} $ the set of symmetric matrices, and $ \mathcal{P}(n) \subset \mathbb{R}^{n\times n} $ the set of symmetric positive definite matrices. We equip $\mathcal{P}(n)$ with the \emph{affine-invariant Riemannian metric}, which is defined by
\begin{equation} \label{eq:AffineInvariantMetric}
    \langle U, V \rangle := {\rm tr}\left(Y^{-1} U Y^{-1} V\right), \quad \forall Y \in \mathcal{P}(n), \quad U, V \in T_Y \mathcal{P}(n) \equiv \mathcal{S}(n),
\end{equation}
where $ {\rm tr}(\cdot) $ denotes the trace operator, and $T_Y \mathcal{P}(n)$ is the tangent space at $Y$, identified with $ \mathcal{S}(n) $. This metric endows $ \mathcal{P}(n) $ with the structure of a Hadamard manifold.   The {\it affine-invariant Riemannian distance} between two points $X, Y \in \mathcal{P}(n)$ is defined by
    \begin{equation} \label{eq:RiemannianDistance}
        d(X, Y) =  {\rm tr}^{1/2}\left(  {\rm Log}^2\left(Y^{-1/2} X Y^{-1/2}\right)  \right)  = {\rm tr}^{1/2}\left(  {\rm Log}^2\left(Y^{-1} X \right)  \right) ,
    \end{equation}
    where ${\rm Log}$ denotes the usual matrix logarithm. The  exponential map at $Y \in \mathcal{P}(n)$ with respect to the affine-invariant metric given by
    \begin{equation} \label{eq:ExpSPD}
        \exp_Y(V) := Y^{1/2} {\rm Exp}\left(Y^{-1/2} V Y^{-1/2}\right) Y^{1/2},
    \end{equation}
    for all $V \in T_Y \mathcal{P}(n)$, where  ${\rm Exp}$ denotes the  usual matrix exponential. The inverse of the exponential map, is denoted by 
  \begin{equation} \label{eq:LogSPD}
\log_XY :=X^{1/2}{\rm Log}(X^{-1/2} Y X^{-1/2}) X^{1/2}.
    \end{equation}
 The {\it gradient} on the manifold of symmetric positive definite matrices  $\mathcal{P}(n)$  of a differentiable function   $f: \mathcal{P}(n) \to  {\mathbb R}$  is the unique vector field $\mathcal{P}(n) \ni X\mapsto {\rm grad} f(X)\in \mathcal{S}(n)$  given by 
      \begin{equation} \label{eq:gradSPD}
    \mbox{grad} f(X)=Xf'(X)X, 
   \end{equation}
  where $f'(X) \in \mathcal{S}(n)$ denotes the Euclidean gradient of  $ f$ at $X$.

We conclude this section with a useful property of the Riemannian distance that is instrumental in computing Busemann functions on the manifold of symmetric positive definite matrices. Its proof follows directly from \eqref{eq:RiemannianDistance} and is therefore omitted.
  \begin{lemma}\label{lem:Log.property}
 If $ X \in \mathcal{P}(n) $  and $ V $ is a non-singular matrix, then ${\rm Log}(V X V^{-1}) = V ({\rm Log} X) V^{-1}.$ As a consequence,  for given  $ X, Y \in \mathcal{P}(n)$ and  non-singular matrices  $Z$ and $ V $ such that   $ ZXV $ and $ Z^{-1} Y V^{-1} $ both belong to $ \mathcal{P}(n) $, there holds 
$
d\left( ZXV, Y \right) = d\left( X, Z^{-1} Y V^{-1} \right).
$
\end{lemma}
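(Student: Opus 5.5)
The first claim, ${\rm Log}(VXV^{-1}) = V({\rm Log}\,X)V^{-1}$, comes from spectral calculus for the matrix logarithm. Since $X\in\mathcal{P}(n)$, write $X = Q\Lambda Q^{T}$ with $Q$ orthogonal and $\Lambda$ diagonal with positive entries. Then $VXV^{-1} = (VQ)\Lambda (VQ)^{-1}$ is diagonalizable with the same positive spectrum. The principal logarithm of a matrix with positive real eigenvalues is the unique real logarithm whose eigenvalues are real. It can also be written as a polynomial $p(X)$ interpolating $\log$ on the spectrum of $X$, and this representation is the cleanest to use. Indeed $p(VXV^{-1}) = Vp(X)V^{-1}$ holds trivially for any polynomial, and the same $p$ interpolates $\log$ on the spectrum of $VXV^{-1}$. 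Hence ${\rm Log}(VXV^{-1}) = V\,{\rm Log}(X)\,V^{-1}$. This interpolation argument is the only point requiring care: $VXV^{-1}$ is in general not symmetric, so the definition of ${\rm Log}$ must be taken for diagonalizable matrices with positive spectrum rather than via an orthogonal eigendecomposition.

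For the distance identity, use the second form in \eqref{eq:RiemannianDistance}, namely $d(A,B)={\rm tr}^{1/2}({\rm Log}^2(B^{-1}A))$. This form is itself justified by the first claim, since $Y^{-1}X = Y^{-1/2}(Y^{-1/2}XY^{-1/2})Y^{1/2}$ is a similarity. Then compute
\[
d(ZXV,Y)^2={\rm tr}\,{\rm Log}^2\!\left(Y^{-1}ZXV\right),\qquad d(X,Z^{-1}YV^{-1})^2={\rm tr}\,{\rm Log}^2\!\left(VY^{-1}ZX\right).
\]
The second expression uses $(Z^{-1}YV^{-1})^{-1} = VY^{-1}Z$.

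The two arguments of ${\rm Log}^2$ are similar, because
\[
VY^{-1}ZX = V\,(Y^{-1}ZXV)\,V^{-1}.
\]
Apply the first claim to the matrix $W := Y^{-1}ZXV$. Here $W = Y^{-1}(ZXV)$ is the product of two SPD matrices, so it is similar to an SPD matrix and has positive spectrum; this is exactly where the hypothesis $ZXV\in\mathcal{P}(n)$ is used. The first claim gives ${\rm Log}(VWV^{-1}) = V\,{\rm Log}(W)\,V^{-1}$. Squaring, we get ${\rm Log}^2(VWV^{-1}) = V\,{\rm Log}^2(W)\,V^{-1}$. Finally, the cyclic invariance of the trace gives equality of the two traces, which proves $d(ZXV,Y) = d(X,Z^{-1}YV^{-1})$.

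The hypothesis $Z^{-1}YV^{-1}\in\mathcal{P}(n)$ is needed only so that the right-hand side is a distance between points of $\mathcal{P}(n)$.
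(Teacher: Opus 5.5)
Your proof is correct and follows essentially the route the paper intends; the paper omits the proof, saying it follows directly from \eqref{eq:RiemannianDistance}, and you use the same ingredients: similarity invariance of ${\rm Log}$, the $Y^{-1}X$ form of the distance, and cyclicity of the trace. Note that you apply the first claim to $W=Y^{-1}ZXV\notin\mathcal{P}(n)$, which is not covered by the lemma as stated; it is legitimate here because your interpolation argument covers every diagonalizable matrix with positive spectrum, and alternatively you can apply the stated claim twice via $W=Y^{-1/2}\bigl(Y^{-1/2}ZXVY^{-1/2}\bigr)Y^{1/2}$.
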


\subsection{Subdifferential of a convex function   on Hadamard manifolds}

In this section, we recall the subdifferential of a convex function on a Hadamard manifold and its main properties, as presented in \cite{Udriste1994} and further developed in \cite{Ferreira2002, LiMordukhovich2011}. These classical notions are reviewed to prepare the discussion on the characterization of the subdifferential via Busemann function  in the subsequent sections. We begin with the definitions of convex sets and convex functions.

A set $\Omega \subset {\mathbb M}$ is said to be convex, if for all $p,q\in \Omega$ we have $\gamma_{pq}(t)\in \Omega$, for all $t\in [0,1]$.  A function $f\colon{\mathbb M} \to \overline{\mathbb{R}}$ is \emph{$\sigma$-strongly convex} for $\sigma \geq 0$ if $(f\circ\gamma_{pq})(t)\le (1-t)f(p)+tf(q)-\frac{\sigma}{2}t(1-t)d^2(p,q)$,  for all $p,q\in{\mathbb M}$ and $t\in[0,1]$.  In particular, $f$ is \emph{convex} when $\sigma=0$. For $\sigma=0$, $f$ is \emph{strictly convex} if the inequality is strict for all $p\neq q$ in ${\rm dom}\, f$ and all $t\in(0,1)$.

\begin{definition} \label{def:Sugd}	
Let  $f\colon \mathbb M \rightarrow \overline{\mathbb{R}}$  be a convex function and $q\in {\rm dom}\, f$.  A vector $ s\in T_{{q}}{\mathbb M}$ is said to be subgradient  of $f$ at ${{q}}$  if
\begin{equation} \label{eq:Sugd}
f({p}) \geq f({{q}})+\left\langle  s , \log_{ {q} } p\right\rangle, \qquad \forall p\in {\mathbb M}.
\end{equation}
The set of all subgradients of  $f$ at the point ${q}$ is called the subdifferential and is denoted by $\partial f({q})$.
\end{definition}
 It is a well-established fact that the subdifferential set $\partial f({p})$ is nonempty for every ${p}\in {\rm int} \, {\rm dom} \, f$. For an analytic proof, refer to \cite[Theorem~4.5]{Udriste1994}, and for a geometric proof, see \cite{Ferreira2002}. Moreover, $\partial f({p})$ is recognized as a convex and compact set, as demonstrated in \cite[Theorem 4.6]{Udriste1994}. To explore further useful properties of $\partial f(p)$, we denote by $f'(p, v)$ the {\it directional derivative of $f$ at $p$ in the direction of $v \in T_p \mathbb{M}$}, as defined in \cite[Definition 4.1]{Udriste1994}.  Recall that, for a  given $p\in \mathbb M$,  we have 
 $
{\rm dom}\, f'(p, \cdot):=\bigl\{\,v\in T_p\mathbb{M}:~\exists\ {\hat t}>0; \exp_p(tv)\in {\rm dom }\, f\ \ \forall\,t\in[0,  {\hat t})\bigr\}.
$
The proof of the first part of  the next  result can be found  in \cite[Theorem 4.8]{Udriste1994}; for additional details, see \cite{NetoFerreiraLucambio2000} and \cite[Proposition 3.8(ii)]{LiMordukhovich2011}. The second part is addressed in \cite[Proposition 4.3]{LiMordukhovich2011}.
\begin{proposition}\label{pr:sfsd}
Let  $f\colon \mathbb M \rightarrow \overline{\mathbb{R}}$ be a convex function. Then,    for each fixed $p\in {\rm dom } \, f$,  there holds  $\partial f(p)=\left\{s \in T_{p}{\mathbb M} :~ f'(p,v) \geq \langle s,v \rangle,  v \in T_{p}{\mathbb M}\right\}$. In addition, if $g\colon \mathbb M \rightarrow \overline{\mathbb{R}}$ is   a convex function such that ${\rm dom } \, f\cap {\rm dom }\, g$ is convex, then  $\partial (f+g)(p)=\partial f(p)+ \partial g(p)$, for  each $p\in ({\rm int} \,{\rm dom }\, f)\cap {\rm dom } \, g$.
\end{proposition}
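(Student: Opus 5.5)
The plan is to prove the two parts of Proposition~\ref{pr:sfsd} separately. Both reduce, via the exponential map at $p$, to convex-analytic statements on the linear space $T_p\mathbb M$.

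\emph{First part.} I would first show that for $v\in T_p\mathbb M$ the function $t\mapsto f(\exp_p(tv))$ is convex on $[0,\infty)$, because $t\mapsto \exp_p(tv)$ is a geodesic. Consequently the difference quotient $t\mapsto \bigl(f(\exp_p(tv))-f(p)\bigr)/t$ is nondecreasing in $t>0$. So the directional derivative $f'(p,v)=\lim_{t\downarrow0}$ of this quotient exists in $[-\infty,+\infty]$ and is bounded above by each quotient. I would also note that $f'(p,\cdot)$ is positively homogeneous. Then:
\begin{itemize}
\item[($\subseteq$)] If $s\in\partial f(p)$, put $q=\exp_p(tv)$ in \eqref{eq:Sugd}. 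Since $\log_p\exp_p(tv)=tv$, this gives $f(\exp_p(tv))-f(p)\ge t\langle s,v\rangle$. Dividing by $t$ and letting $t\downarrow0$ yields $f'(p,v)\ge\langle s,v\rangle$.
\item[($\supseteq$)] Suppose $f'(p,v)\ge\langle s,v\rangle$ for all $v$. Given $q\in\mathbb M$, take $v=\log_pq$. Monotonicity of the quotient at $t=1$ gives $f(q)-f(p)\ge f'(p,\log_pq)\ge\langle s,\log_pq\rangle$, which is \eqref{eq:Sugd}.
\end{itemize}
The case $f(q)=+\infty$ is trivial, so only the ordering of the quotient matters here.

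\emph{Second part.} The inclusion $\partial f(p)+\partial g(p)\subseteq\partial(f+g)(p)$ follows by adding the two subgradient inequalities. For the reverse inclusion, take $s\in\partial(f+g)(p)$. By the first part (applied to $f+g$, which is convex) we have $(f+g)'(p,v)\ge\langle s,v\rangle$. Additivity of directional derivatives gives $(f+g)'(p,v)=f'(p,v)+g'(p,v)$; this holds because $p\in\operatorname{int}\operatorname{dom} f$ makes $f'(p,\cdot)$ finite, so no $\infty-\infty$ arises.

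Next I would use $p\in\operatorname{int}\operatorname{dom} f$ to show that $\varphi:=f'(p,\cdot)$ is a finite sublinear function on $T_p\mathbb M$, hence continuous. Sublinearity should follow from convexity of $f$ together with a comparison between $\exp_p(t(u+w))$ and the midpoint of the geodesic joining $\exp_p(2tu)$ and $\exp_p(2tw)$, up to $o(t)$. To avoid this comparison I would instead cite \cite[Proposition 3.8]{LiMordukhovich2011} or \cite{Udriste1994}, which give sublinearity of $f'(p,\cdot)$ for convex $f$. The function $\psi:=g'(p,\cdot)$ is sublinear with values in $(-\infty,+\infty]$, and its domain is ${\rm dom}\, g'(p,\cdot)$; convexity of this domain uses convexity of ${\rm dom} f\cap{\rm dom} g$.

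We then have $\varphi+\psi\ge\langle s,\cdot\rangle$ on $T_p\mathbb M$, with $\varphi$ continuous everywhere. The Euclidean Moreau--Rockafellar sum rule for sublinear functions gives
\[
\partial(\varphi+\psi)(0)=\partial\varphi(0)+\partial\psi(0).
\]
Hence $s=s_1+s_2$ with $\varphi\ge\langle s_1,\cdot\rangle$ and $\psi\ge\langle s_2,\cdot\rangle$. Applying the first part to $f$ and to $g$ gives $s_1\in\partial f(p)$ and $s_2\in\partial g(p)$.

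\emph{Main obstacle.} The hardest step is establishing sublinearity (specifically subadditivity) of $f'(p,\cdot)$ on a curved manifold, since $\exp_p$ is not affine. My first attempt would be a first-order argument: $d\bigl(\exp_p(t(u+w)),\,m_t\bigr)=o(t)$, where $m_t$ is the geodesic midpoint of $\exp_p(2tu)$ and $\exp_p(2tw)$. Combined with local Lipschitz continuity of $f$ on $\operatorname{int}\operatorname{dom} f$, this would give subadditivity. For $g$ I would fall back on the cited results.
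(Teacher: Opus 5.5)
\textbf{Overall assessment.} Your first part is complete and correct. The architecture of the second part is also sound: the directional-derivative characterization followed by the Euclidean Moreau--Rockafellar rule on $T_p\mathbb M$ is the standard argument behind the results the paper cites. The paper gives no proof of its own, only citations.

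\textbf{The step that fails as written.} The sublinearity of $\psi=g'(p,\cdot)$ is not established. The point $p$ may lie on the boundary of ${\rm dom}\,g$, so no Lipschitz bound is available. Your midpoint comparison therefore only produces points $m_t\in{\rm dom}\,g$ with $\log_p m_t/t\to u+w$. It does not produce the points $\exp_p(t(u+w))$ themselves. On a curved manifold this shows only that $u+w$ lies in the \emph{closure} of ${\rm dom}\,\psi=\{v:\exp_p(tv)\in{\rm dom}\,g \text{ for small } t>0\}$. Convexity of this feasible-direction set is not evident. The hypothesis that ${\rm dom}\,f\cap{\rm dom}\,g$ is convex cannot supply it: that hypothesis is automatic on a Hadamard manifold, and ${\rm dom}\,\psi$ depends only on ${\rm dom}\,g$ near $p$.

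A smaller point: $\psi$ can take the value $-\infty$ in general, for example $g(x)=-\sqrt{x}$ on $[0,\infty)$ at $p=0$. You only get $\psi>-\infty$ after using $s$, via $\psi\ge\langle s,\cdot\rangle-\varphi$.

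\textbf{The repair.} It is cheap, and it removes the need to cite anything for $g$: replace $\psi$ by its lower semicontinuous hull $\bar\psi$.

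First, because $\varphi-\langle s,\cdot\rangle$ is continuous, $\varphi+\bar\psi\ge\langle s,\cdot\rangle$ still holds.

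Next, take $u,w\in{\rm dom}\,\psi$. Let $m_t$ be the midpoint of $\exp_p(2tu)$ and $\exp_p(2tw)$, and set $z_t:=\log_p m_t/t$, so $z_t\to u+w$. Convexity of $g$ and monotonicity of difference quotients give
\[
\psi(z_t)\le \frac{g(m_t)-g(p)}{t}\le \frac{g(\exp_p(2tu))-g(p)}{2t}+\frac{g(\exp_p(2tw))-g(p)}{2t}.
\]
Letting $t\downarrow 0$ gives $\bar\psi(u+w)\le\liminf_{t\downarrow0}\psi(z_t)\le\psi(u)+\psi(w)$. Approximating general $u,w$ then yields subadditivity of $\bar\psi$.

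Hence $\bar\psi$ is lsc and sublinear, and it is proper because $\bar\psi\ge\langle s,\cdot\rangle-\varphi$. Moreau--Rockafellar then gives $s=s_1+s_2$ with $\varphi\ge\langle s_1,\cdot\rangle$ and $\bar\psi\ge\langle s_2,\cdot\rangle$. Since $\psi\ge\bar\psi$, your first part yields $s_2\in\partial g(p)$.

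The same midpoint argument, combined with the local Lipschitz bound at interior points, is what makes $\varphi$ finite, sublinear and continuous. So with this change the whole proof is self-contained.
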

The proof of the first claim in the theorem below can be found in \cite[Theorem 4.10, p. 76]{Udriste1994}, with the proof of the second claim following a similar approach.
\begin{theorem} \label{thm:f-convex-subdiff}
Let  $f\colon \mathbb M \rightarrow \overline{\mathbb{R}}$ be a function. Then $f$ is convex (resp. $\sigma$-strongly convex) if and only if ${\rm dom} \, f$ is convex and, for every $p\in{\rm dom}\, f$, there exists $v\in T_p\mathbb M$ such that
$f(q)\ge f(p)+\langle v,\log_p q \rangle$ (resp.  $f(q)\ge f(p)+\langle v,\log_p q\rangle+\tfrac{\sigma}{2}d^2(p,q))$, for all $q\in\mathbb M$. In either case, $\partial f(p)\neq\emptyset$ for all $p\in{\rm dom} \, f$, and the inequality holds for every $v\in\partial f(p)$.
\end{theorem}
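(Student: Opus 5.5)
We prove the two directions separately; the strongly convex case runs in parallel with the convex one.

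\emph{Sufficiency.} Assume ${\rm dom}\, f$ is convex and every $p\in{\rm dom}\, f$ has a vector $v$ with the stated lower bound. Fix $p,q\in{\rm dom}\, f$ and $t\in(0,1)$, and put $x=\gamma_{pq}(t)$, which lies in ${\rm dom}\, f$. Take $v\in T_x\mathbb M$ supporting $f$ at $x$. Since $\gamma_{pq}$ is a geodesic through $x$,
\[
\log_x p=-t\,\gamma_{pq}'(t), \qquad \log_x q=(1-t)\,\gamma_{pq}'(t),
\]
so $(1-t)\log_x p+t\log_x q=0$. Writing the inequality at $x$ with $q$ replaced by $p$ and by $q$, multiplying by $1-t$ and $t$ respectively, and adding, the linear terms cancel. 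This leaves $(1-t)f(p)+tf(q)\ge f(x)$.

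In the strongly convex case the same sum has the extra term
\[
\tfrac{\sigma}{2}\bigl[(1-t)d^2(x,p)+t\,d^2(x,q)\bigr].
\]
Using $d(x,p)=t\,d(p,q)$ and $d(x,q)=(1-t)d(p,q)$, this equals $\tfrac{\sigma}{2}\bigl[(1-t)t^2+t(1-t)^2\bigr]d^2(p,q)=\tfrac{\sigma}{2}t(1-t)d^2(p,q)$, which is exactly what $\sigma$-strong convexity requires. If $p$ or $q$ lies outside the domain, the convexity inequality is trivial.

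\emph{Necessity and the final claim.} If $f$ is convex, then ${\rm dom}\, f$ is convex directly from the definition. For the existence of $v$ at every point of ${\rm dom}\, f$ (not only interior points), we invoke \cite[Theorem 4.10]{Udriste1994} as the paper indicates. By Definition~\ref{def:Sugd}, any such $v$ belongs to $\partial f(p)$, so $\partial f(p)\neq\emptyset$, and conversely every $v\in\partial f(p)$ satisfies the convex inequality.

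For $\sigma$-strong convexity, take $v\in\partial f(p)$ and $q\in{\rm dom}\, f$, and set $\gamma=\gamma_{pq}$. The definition gives
\[
\frac{f(\gamma(t))-f(p)}{t}\le f(q)-f(p)-\tfrac{\sigma}{2}(1-t)d^2(p,q).
\]
As $t\downarrow0$, the left side decreases to $f'(p,\log_p q)$. By Proposition~\ref{pr:sfsd} this is at least $\langle v,\log_p q\rangle$, and letting $t\to0$ on the right yields the strong inequality for every $v\in\partial f(p)$.

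The main obstacle is nonemptiness of $\partial f(p)$ at boundary points of ${\rm dom}\, f$. In general this is delicate, since the directional derivative may equal $-\infty$. We therefore rely on the cited theorem rather than reprove it. Under strong convexity the same citation applies because $f$ is in particular convex, and the limiting argument above then upgrades every subgradient to satisfy the strong inequality.
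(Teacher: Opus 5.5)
There is a genuine gap, and it is exactly the step you flagged: producing a supporting vector at \emph{every} $p\in{\rm dom}\,f$, boundary points included. Citing \cite[Theorem 4.10]{Udriste1994} does not provide this. That result is formulated for finite-valued convex functions on an open convex set, so it only yields subgradients at points interior to the domain. This matches the paper's own remark, just before Proposition~\ref{pr:sfsd}, that $\partial f(p)\neq\varnothing$ for $p\in{\rm int}\,{\rm dom}\,f$.

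No other reference can close the gap, because the claim is false at boundary points. On $\mathbb M=\mathbb R$ take $f(x)=-\sqrt{x}+\tfrac{\sigma}{2}x^2$ for $x\ge 0$ and $f(x)=+\infty$ for $x<0$. This $f$ is lsc and $\sigma$-strongly convex for every $\sigma\ge 0$, and ${\rm dom}\,f=[0,+\infty)$ is convex. But a vector $v$ with $f(q)\ge f(0)+vq$ for all $q$ would have to satisfy $v\le -q^{-1/2}+\tfrac{\sigma}{2}q$ for every $q>0$, which is impossible. So $\partial f(0)=\varnothing$; here $f'(0,1)=-\infty$, which is precisely the phenomenon you mention. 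Consequently the \emph{only if} direction, and the assertion $\partial f(p)\neq\varnothing$ for all $p\in{\rm dom}\,f$, can hold only for $p\in{\rm int}\,{\rm dom}\,f$ (for instance when ${\rm dom}\,f$ is open or $f$ is finite on $\mathbb M$). The paper's own proof consists of the same citation and has the same defect.

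Everything else you wrote is correct and does not depend on this issue. Your sufficiency argument, using $(1-t)\log_x p+t\log_x q=0$ and $(1-t)t^2+t(1-t)^2=t(1-t)$, is the standard proof. Your limiting argument is a valid way to obtain the second claim, which the paper leaves to ``a similar approach'': under $\sigma$-strong convexity, every $v\in\partial f(p)$ satisfies the strong inequality. You do not even need Proposition~\ref{pr:sfsd} there, since $\log_p\gamma(t)=t\log_p q$ lets you apply the subgradient inequality directly at $\gamma(t)$. If you restrict the necessity direction and the nonemptiness claim to interior points, or assume ${\rm dom}\,f$ is open, your proof is complete.
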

The proof of the following result immediately follows from  \cite[Proposition 2.5]{WangLiWangYao2015}.
\begin{proposition}
    \label{cont_subdif}
    Let $f\colon \mathbb M \rightarrow \overline{\mathbb{R}}$ be a convex
    and lower semicontinuous function. Consider the sequence
    $(p_{k})_{k\in\mathbb{N}}\subset \mbox{int} \, {\rm dom } \, f$ such that $\lim_{k\to\infty}p_{k}={\bar p} \in \mbox{int} \, {\rm dom } \, f$.
    If $(v_{k})_{k\in\mathbb{N}}$ is a sequence such that $v_{k}\in \partial f(p_{k})$
    for every $k\in \mathbb{N}$, then $(v_{k})_{k\in\mathbb{N}}$ is bounded and
    its cluster points belong to $\partial f({\bar p}).$
\end{proposition}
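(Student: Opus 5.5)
The plan is to reduce the statement to the Euclidean-style argument. First establish that the subgradients are bounded by a local Lipschitz bound, and then get the cluster point property by passing to the limit in the subgradient inequality \eqref{eq:Sugd}, using the continuity of $\log$ recalled in Section~\ref{sec:int.r}.

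\emph{Boundedness.} Since $\bar p\in{\rm int}\,{\rm dom}\,f$, we fix $r>0$ with the closed ball $B[\bar p,2r]\subset{\rm int}\,{\rm dom}\,f$. A convex function is bounded above on a compact subset of the interior of its domain: cover $B[\bar p,2r]$ by finitely many geodesic simplices or small convex neighbourhoods and apply the convexity inequality. Together with lsc (or the same convexity argument) it is also bounded below there. Hence $f$ is $L$-Lipschitz on $B[\bar p,r]$ in the sense of Definition~\ref{def:Lip}, by the standard argument along geodesics extended to the larger ball.

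For $k$ large we have $p_k\in B[\bar p,r/2]$. If $v_k\neq 0$, put $q_k:=\exp_{p_k}\bigl((r/2)\,v_k/\|v_k\|\bigr)$, which lies in $B[\bar p,r]$ and satisfies $\log_{p_k}q_k=(r/2)v_k/\|v_k\|$. Then \eqref{eq:Sugd} gives
\[
\tfrac r2\|v_k\|\le f(q_k)-f(p_k)\le L\,\tfrac r2 .
\]
So $\|v_k\|\le L$, and $(v_k)$ is bounded.

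\emph{Cluster points.} The subtlety is that $v_k\in T_{p_k}\mathbb M$ lies in varying tangent spaces. A cluster point is understood in the tangent bundle $T\mathbb M$, and boundedness with $p_k\to\bar p$ gives compactness there (via a local trivialization). Take a subsequence with $v_{k_j}\to\bar v\in T_{\bar p}\mathbb M$ and fix $p\in\mathbb M$. Since $\log_{p_k}p\to\log_{\bar p}p$ and the inner product is continuous on $T\mathbb M$, we get $\langle v_{k_j},\log_{p_{k_j}}p\rangle\to\langle\bar v,\log_{\bar p}p\rangle$. The Lipschitz continuity near $\bar p$ gives $f(p_{k_j})\to f(\bar p)$. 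Passing to the limit in $f(p)\ge f(p_{k_j})+\langle v_{k_j},\log_{p_{k_j}}p\rangle$ (trivial if $f(p)=+\infty$) yields $f(p)\ge f(\bar p)+\langle\bar v,\log_{\bar p}p\rangle$, i.e.\ $\bar v\in\partial f(\bar p)$.

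The main obstacle is the local Lipschitz/boundedness step on the manifold. I would handle it by bounding $f$ above near $\bar p$ by the maximum over the vertices of a small geodesic simplex containing $\bar p$ in its interior, which uses convexity of geodesic convex hulls in small balls. The Lipschitz constant then follows from the usual three-point convexity argument along a geodesic through $p,q$ extended to the boundary of $B[\bar p,2r]$. Alternatively, this step can be quoted directly from \cite[Proposition 2.5]{WangLiWangYao2015}.
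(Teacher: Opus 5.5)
Your argument is correct, but it takes a different route from the paper. The paper gives no proof and simply invokes \cite[Proposition~2.5]{WangLiWangYao2015}. You instead reconstruct the standard argument in three steps: a local Lipschitz constant $L$ near $\bar p$; the bound $\|v_k\|\le L$, obtained by testing \eqref{eq:Sugd} at $q_k=\exp_{p_k}((r/2)v_k/\|v_k\|)$ (the same trick as in Corollary~\ref{cor:subgrad-bound-lipschitz}); and a limit passage in \eqref{eq:Sugd} along a subsequence converging in $T\mathbb M$. Your route buys self-containedness and an explicit bound on the subgradients. The citation, by contrast, outsources the one manifold-specific ingredient, namely local upper boundedness of $f$ near $\bar p$.

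That ingredient is also the soft spot in your sketch. It is not immediate in curved geometry that the geodesic convex hull of a few vertices near $\bar p$ contains a neighbourhood of $\bar p$. Since $f$ is lsc, a cleaner route is available.
\begin{enumerate}
\item The closed sets $\{x\in B[\bar p,\rho]: f(x)\le n\}$, $n\in\mathbb N$, cover a closed ball $B[\bar p,\rho]\subset{\rm int}\,{\rm dom}\,f$. By Baire's theorem, one of them contains an open ball $B(y,\delta)$.
\item Pick $w\in{\rm dom}\,f$ on the geodesic through $y$ and $\bar p$, slightly beyond $\bar p$. Then $\bar p=\exp_w(t_0\log_w y)$ for some $t_0\in(0,1)$.
\item The homeomorphism $u\mapsto\exp_w(t_0\log_w u)$ carries $B(y,\delta)$ onto a neighbourhood of $\bar p$. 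On that neighbourhood, convexity gives $f\le(1-t_0)f(w)+t_0 n$.
\end{enumerate}
The same argument applies at every interior point, so compactness then gives the bound on $B[\bar p,2r]$ that your Lipschitz step needs.

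Finally, in the cluster-point step you do not need continuity of $f$ at $\bar p$. Lower semicontinuity, $\liminf_j f(p_{k_j})\ge f(\bar p)$, is exactly what the limit passage uses.
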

\section{The Busemann functions on Hadamard manifolds} \label{sec:bfNewDef}

In this section, we review Busemann functions on Hadamard manifolds, introducing the notation and collecting the properties needed in the sequel. Since our definition is slightly more general than the standard one, we include brief proofs of selected results. For further background, see, for instance, \cite{Sakai1996}.

Let $\mathbb{M}$ be a Hadamard manifold with Riemannian distance $d$. Given a base point $q \in \mathbb{M}$ and a vector $v \in T_q \mathbb{M}$, the associated \emph{Busemann function} is defined by
\begin{equation} \label{eq:defBFNDef}
B_{{q}, v}(p) \coloneqq \lim_{t \to+ \infty} \left(d\left({p}, \exp_{q}(tv)\right)-\|v\| t\right),     \qquad  \forall p\in {\mathbb M}. 
\end{equation}
For $v=0$, this reduces to $B_{q,0}(p)=d(q,p)$. Moreover, by the triangle inequality,
\begin{equation} \label{eq:defFIWDef2b}
|B_{{q}, v}(p)|\leq d({q}, p),    \qquad  \forall q, p \in {\mathbb M}, \quad \forall v\in T_{q}{\mathbb M}.
\end{equation}
\begin{remark} \label{re:bfn1}
Classically, Busemann functions are defined for unit vectors $v \neq 0$; see, e.g.,  \cite[Definition 8.17, p. 268]{BridsonHaefliger1999} and  \cite[p. 174]{Sakai1996}. Since $B_{q,v} = B_{q,v/\|v\|}$ for $v \neq 0$, we extend the definition to arbitrary $v$, including $v=0$, which is convenient for our purposes.
\end{remark}

The following lemma summarizes the main regularity properties of Busemann functions and provides two equivalent expressions for their gradients.

\begin{lemma} \label{le:CharactBusFunc}
Let $q \in \mathbb{M}$ and $v \in T_q\mathbb{M}$ with $v \neq 0$. Then $B_{q,v}$ is convex and continuously differentiable on $\mathbb{M}$, with
\begin{align}
{\rm grad}\, B_{q,v}(p)
&= -\lim_{t \to \infty} \frac{\log_p(\exp_q(tv))}{d(p,\exp_q(tv))} \label{eq:gradbf1}\\
&= -\frac{1}{\|v\|} \lim_{t \to \infty} \frac{\log_p(\exp_q(tv))}{t}, 
\qquad \forall p \in \mathbb{M}. \label{eq:gradbf2}
\end{align}
Moreover, $\|{\rm grad}\, B_{q,v}(p)\|=1$ for all $p \in \mathbb{M}$, and
$
{\rm grad}\, B_{q,v}(q) = -v/\|v\|.
$
\end{lemma}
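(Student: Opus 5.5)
We prove the lemma by passing to the limit in the family of functions obtained from the distance to points far out along the ray. Write $\gamma(t)=\exp_q(tv)$; by Remark~\ref{re:bfn1} we may assume $\|v\|=1$. Set $h_t(p)\coloneqq d(p,\gamma(t))-t$.

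\textbf{Existence and convexity.} By the triangle inequality, for $s<t$,
\[
h_t(p)-h_s(p)=d(p,\gamma(t))-d(p,\gamma(s))-(t-s)\le 0 .
\]
Hence $t\mapsto h_t(p)$ is nonincreasing. It is also bounded below by $-d(p,q)$, so the limit defining $B_{q,v}$ exists. Each $h_t$ is $1$-Lipschitz, and so is $B_{q,v}$.

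In a Hadamard manifold, $d(\cdot,x)$ is geodesically convex; this follows from the convexity of the distance function, e.g.\ via Lemma~\ref{le:CosLawF} or \cite{Sakai1996}. So each $h_t$ is convex, and a pointwise limit of convex functions is convex. This gives convexity of $B_{q,v}$.

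\textbf{Convergence of the unit vectors.} Define the unit vectors
\[
u_t(p)\coloneqq -\frac{\log_p\gamma(t)}{d(p,\gamma(t))}=\operatorname{grad} h_t(p), \qquad t>d(p,q).
\]
We show $u_t(p)$ converges as $t\to\infty$. The plan is to use the comparison inequality \eqref{eq:coslaw2} at the vertex $x=p$ with $y=\gamma(s)$, $z=\gamma(t)$. Since $d(\gamma(s),\gamma(t))=t-s$, it yields
\[
2\langle \log_p\gamma(s),\log_p\gamma(t)\rangle\ \ge\ d_s^2+d_t^2-(t-s)^2, \qquad d_\tau\coloneqq d(p,\gamma(\tau)).
\]
Divide by $2d_sd_t$ and write $d_\tau=\tau+h_\tau(p)$, where $h_\tau(p)\to B_{q,v}(p)=:b$. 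Then
\[
1-\langle u_s,u_t\rangle\le \frac{(t-s)^2-(d_t-d_s)^2}{2d_sd_t}=\frac{(t-s-d_t+d_s)(t-s+d_t-d_s)}{2d_sd_t}.
\]
The first factor equals $h_s-h_t\ge0$, which tends to $0$. The second factor is at most $2(t-s)$, and $\tfrac{t-s}{d_sd_t}\lesssim 1/s$. So $\|u_s-u_t\|^2=2(1-\langle u_s,u_t\rangle)\to 0$ as $s,t\to\infty$, and $u_t(p)$ is Cauchy. Call its limit $u(p)$; it satisfies $\|u(p)\|=1$.

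This Cauchy estimate is the main obstacle. To get continuity of $u$ we need it uniformly for $p$ in compact sets. We get this from the monotone convergence $h_t\downarrow B$, which is uniform on compacts by Dini's theorem since the limit is continuous, together with the uniform bound $d_s\ge s-d(p,q)$. Each $u_t$ is continuous, so $u$ is continuous.

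\textbf{Differentiability and the second formula.} For $t$ large, $h_t$ is smooth near $p$ with gradient $u_t$. Since $h_t\to B_{q,v}$ and $\operatorname{grad} h_t\to u$ locally uniformly, the limit is $C^1$ with $\operatorname{grad} B_{q,v}=u$. One way to see this is to integrate along geodesics: $h_t(\exp_p w)-h_t(p)=\int_0^1\langle u_t,\dot\sigma\rangle$, then pass to the limit. This gives \eqref{eq:gradbf1}, and $\|\operatorname{grad} B_{q,v}(p)\|=1$.

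Formula \eqref{eq:gradbf2} follows because $d(p,\gamma(t))/t\to1$, as $|d(p,\gamma(t))-t|\le d(p,q)$.

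\textbf{Value at $q$.} At $p=q$ we have $\log_q\gamma(t)=tv$, so $u_t(q)=-v$ for every $t$. Restoring a general $v\neq0$ gives $\operatorname{grad} B_{q,v}(q)=-v/\|v\|$.
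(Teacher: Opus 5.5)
Your proof is correct, and it follows a different route from the paper's. The paper proves almost none of this lemma itself. Convexity, $C^1$ regularity, $\|{\rm grad}\,B_{q,v}\|=1$ and \eqref{eq:gradbf1} are all imported from the proof of \cite[Lemma~4.12, p.~231]{Sakai1996}. The only argument it writes out is the one you also give at the end: $|d(p,\exp_q(tv))-t\|v\||\le d(p,q)$, hence $d(p,\exp_q(tv))/t\to\|v\|$, which turns \eqref{eq:gradbf1} into \eqref{eq:gradbf2}. The value at $q$ is not argued separately.

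You instead reconstruct the cited result from Lemma~\ref{le:CosLawF} alone. Your ingredients are:
\begin{enumerate}
\item monotonicity of $h_t$, which also justifies the existence of the limit in \eqref{eq:defBFNDef} that the paper takes for granted;
\item convexity of $B_{q,v}$ as a pointwise limit of convex functions;
\item the comparison inequality at the vertex $p$, which makes $u_t(p)$ Cauchy;
\item the standard fact that locally uniform convergence of gradients passes $C^1$ regularity to the limit.
\end{enumerate}

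Two steps can be tightened. First, since $h_s(p)\le d(p,q)$ and $h_t(p)\ge -d(p,q)$, your estimate already gives $\|u_s(p)-u_t(p)\|^2\le 16\,d(p,q)/s$ for $t\ge s\ge 2d(p,q)$. This bound is uniform on compact sets without Dini's theorem, and it yields an explicit rate for \eqref{eq:gradbf1}.

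Second, convexity of $d(\cdot,x)$ really does follow from \eqref{eq:coslaw2}. Take $z\neq x$ and set $a=d(z,x)$, $e=\log_z x/a$, $w=\log_z y$. Then \eqref{eq:coslaw2} reads $d^2(y,x)\ge\|ae-w\|^2\ge(a-\langle e,w\rangle)^2$. This is the gradient inequality for $d(\cdot,x)$ at $z$; the case $z=x$ is trivial with subgradient $0$. So Theorem~\ref{thm:f-convex-subdiff} applies.

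The paper's citation buys brevity. Your version is self-contained and quantitative, and it uses nothing beyond the comparison inequality already stated in Section~\ref{sec:int.r}.
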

\begin{proof}
The identity in \eqref{eq:gradbf1} is established in the proof of \cite[Lemma~4.12, p.~231]{Sakai1996}. To prove \eqref{eq:gradbf2}, note that by the triangle inequality we have
$t\|v\| - d(p,q) \leq d(p, \exp_{q}(tv)) \leq t\|v\| + d(p,q)$ for all $t>0$. Thus, dividing by $t$ and then taking the limit as $t$ goes to $+ \infty$, we conclude that
$
\lim_{t\to \infty} ({d(p, \exp_{q}(tv))}/{t}) = \|v\|.
$
Combining this limit with \eqref{eq:gradbf1}, we obtain
\begin{equation*}
{\rm grad} B_{q, v}(p)
= -\frac{1}{\|v\|}  \lim_{t \to \infty} \frac{\log_{p}(\exp_{q}(tv))}{d(p, \exp_{q}(tv))} \lim_{t \to \infty} \frac{d(p, \exp_{q}(tv))}{t}  
=- \frac{1}{\|v\|} \lim_{t \to \infty} \frac{\log_{p}(\exp_{q}(tv))}{t},
\end{equation*}
which completes the proof.
\end{proof}

Note that, for $v=0$, we have $B_{{q}, 0}=d({q}, p)$. Therefore,   we also obtain that  $B_{{q}, 0}$ is  convex  and continuously  differentiable with  the gradient vector field ${\rm grad} B_{{q}, 0}$ satisfying  $\|{\rm grad} B_{{q}, 0}(p)\|=1$, for all $p\neq {q}$.  The following lemma, whose proof is straightforward and thus omitted, in particular implies that the Busemann functions $B_{{q}, v}$ is linear along the geodesic $t \mapsto \exp_q(tv)$ that defines it.
\begin{lemma} \label{eq:pbfu}
Let ${q} \in {\mathbb M}$ be a base point and ${w} \in T_q {\mathbb M}$. Then,
$
B_{{q},v}(\exp_{{q}}(\tau v)) = \tau \|v\|
$
 for all $\tau \in {\mathbb R}$. Consequently, the Busemann functions $B_{{q},v}$ is unbounded both above and below.
\end{lemma}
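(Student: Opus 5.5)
The plan is a direct evaluation of the limit in \eqref{eq:defBFNDef} at the point $p=\exp_q(\tau v)$. This is possible because both $p$ and the moving point $\exp_q(tv)$ lie on the single geodesic $t\mapsto \exp_q(tv)$. The key geometric input is that on a Hadamard manifold every geodesic is minimizing, since $\exp_q$ is a diffeomorphism. Hence for all $s,t\in\mathbb R$,
\[
d\bigl(\exp_q(sv),\exp_q(tv)\bigr)=|t-s|\,\|v\|.
\]
I would justify this by noting that $u\mapsto \exp_q(uv)$, restricted to $[s,t]$, is the unique geodesic segment between its endpoints. Its length $|t-s|\,\|v\|$ is therefore the distance.

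\textbf{Step 1 (trivial case).} If $v=0$, then $\exp_q(\tau v)=q$ and $B_{q,0}(q)=d(q,q)=0=\tau\|v\|$. The identity holds.

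\textbf{Step 2 (case $v\neq 0$).} Fix $\tau$. For $t>\tau$, Step~0 gives $d(\exp_q(\tau v),\exp_q(tv))=(t-\tau)\|v\|$. The expression inside the limit is then constant in $t$ for all $t>\tau$, namely
\[
(t-\tau)\|v\|-t\|v\|=-\tau\|v\|.
\]
So the limit exists and $B_{q,v}(\exp_q(\tau v))$ equals this constant. This shows that $B_{q,v}$ restricted to the defining geodesic is an affine function of $\tau$ with slope of magnitude $\|v\|$, vanishing at $\tau=0$.

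\textbf{Main obstacle: the sign.} The delicate point is reconciling this computation with the stated form $\tau\|v\|$. Taken literally, the definition \eqref{eq:defBFNDef} gives the value $-\tau\|v\|$. This agrees with ${\rm grad}\,B_{q,v}(q)=-v/\|v\|$ from Lemma~\ref{le:CharactBusFunc}, since $\frac{d}{d\tau}B_{q,v}(\exp_q(\tau v))\big|_{\tau=0}=\langle -v/\|v\|,v\rangle=-\|v\|$. The stated sign $\tau\|v\|$ is recovered exactly when $\tau$ measures signed arc length in the direction $-v$, that is, along $\tau\mapsto\exp_q(-\tau v)$. I would therefore first fix the orientation of the parameter, checking it against the gradient formula, and then record the identity in the form matching that orientation. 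I flag this step explicitly because I cannot establish the stated sign from \eqref{eq:defBFNDef} as written.

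\textbf{Step 3 (unboundedness).} Whichever orientation is adopted, $\tau\mapsto B_{q,v}(\exp_q(\tau v))$ is a nonconstant affine function of $\tau\in\mathbb R$ when $v\neq0$. Letting $\tau\to\pm\infty$ shows that $B_{q,v}$ is unbounded above and below on $\mathbb M$. This consequence needs $v\neq 0$; for $v=0$ the function $B_{q,0}=d(q,\cdot)\ge 0$ is bounded below.
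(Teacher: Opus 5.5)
Your direct evaluation of the limit along the defining geodesic is exactly the straightforward argument the paper omits. Your sign is the right one: the printed identity is a typo (as is $w$ in place of $v$), and the correct statement is $B_{q,v}(\exp_q(\tau v))=-\tau\|v\|$. This is precisely what the paper itself uses in the proof of Corollary~\ref{cor:subgrad-bound-lipschitz} (there $-\|s\|B_{q,s}(\exp_q(\tau s))=\tau\|s\|^2$), and it is what Lemma~\ref{le:busfunc} and Example~\ref{ex:defBFNDefK0} give in the flat case. So rather than reorienting the parameter, simply record the corrected identity. Your remark that the unboundedness consequence needs $v\neq 0$ is also correct, since $B_{q,0}=d(q,\cdot)\ge 0$.
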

The following lemma records a continuity property of the Busemann functions that will be particularly useful in Section~\ref{sec:OptProhm}. A proof can be obtained by adapting the argument of \cite[Lemma~5]{Criscitiello2025}, see also \cite[Chapter II.1]{Ballmann1995}.
\begin{lemma}\label{le:cbf} 
Let $\bar{q} \in \mathbb{M}$ and $\bar{w} \in T_{\bar{q}} \mathbb{M}$. Consider sequences $(q_k)_{k \in \mathbb{N}} \subset \mathbb{M}$ and $(v_k)_{k \in \mathbb{N}}$ with $v_k \in T_{q_k} \mathbb{M}$, satisfying  $\lim_{k \to +\infty} q_k = \bar{q}$, and $\lim_{k \to +\infty} v_k = \bar{w}$. Then, 
$\lim_{k \to +\infty} B_{q_k, v_k}(p) = B_{\bar{q}, \bar{w}}(p),$ for all $p \in \mathbb{M}$.
\end{lemma}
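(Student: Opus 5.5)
The plan is to reduce to unit directions and prove the two one-sided inequalities $\limsup_k B_{q_k,v_k}(p)\le B_{\bar q,\bar w}(p)\le\liminf_k B_{q_k,v_k}(p)$ separately. Fix $p\in\mathbb M$.

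\textbf{The case $\bar w=0$.} This case has to be handled with care. If $v_k=0$ for all large $k$, then $B_{q_k,v_k}(p)=d(q_k,p)\to d(\bar q,p)=B_{\bar q,0}(p)$ and we are done. If instead $v_k\neq 0$ along a subsequence, then $B_{q_k,v_k}=B_{q_k,v_k/\|v_k\|}$ by Remark~\ref{re:bfn1}, and this is in general not close to $d(\bar q,\cdot)$. For instance, in $\mathbb R^n$ one has $B_{q,u}(p)=-\langle u,p-q\rangle$, which is not $\|p-q\|$. So I would first check whether the lemma is meant with $\bar w\neq 0$, or with the convention that $v_k=0$ whenever $\bar w=0$, and then prove it under that hypothesis. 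From now on assume $\bar w\neq0$.

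\textbf{Normalization and the upper bound.} With $\bar w\neq 0$, we have $v_k\neq0$ for large $k$. Put $u_k=v_k/\|v_k\|$ and $\bar u=\bar w/\|\bar w\|$, so $u_k\to\bar u$, and write $B_k=B_{q_k,u_k}$ and $\bar B=B_{\bar q,\bar u}$.

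Set $f_t(p;q,u)=d(p,\exp_q(tu))-t$. By the triangle inequality, $t\mapsto f_t$ is nonincreasing, so $B_{q,u}(p)=\inf_{t>0}f_t(p;q,u)$. For each fixed $t$, the map $(q,u)\mapsto f_t(p;q,u)$ is continuous, because $\exp$ is smooth on $T\mathbb M$ and $d$ is continuous. Hence
\[
\limsup_k B_k(p)\le \lim_k f_t(p;q_k,u_k)=f_t(p;\bar q,\bar u)\quad\text{for every } t .
\]
Letting $t\to\infty$ gives $\limsup_k B_k(p)\le \bar B(p)$. This half is routine, since an infimum of continuous functions is upper semicontinuous.

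\textbf{The lower bound (main obstacle).} For $\liminf_k B_k(p)\ge\bar B(p)$ we need a uniform control of the tail of the limit $t\to\infty$. I would get it from a comparison argument at a large but finite time $T$.

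Let $x_k=\exp_{q_k}(Tu_k)$ and $\bar x=\exp_{\bar q}(T\bar u)$, so $x_k\to\bar x$. For $t\ge T$ write
\[
d(p,\exp_{q_k}(tu_k))-t=\bigl[d(p,\gamma_k(t))-(t-T)\bigr]-T .
\]
The idea is to apply \eqref{eq:coslaw2} at the vertex $x_k$, using the points $p$ and $\gamma_k(t)$, where $\log_{x_k}\gamma_k(t)=(t-T)\,\dot\gamma_k(T)$ and $\dot\gamma_k(T)$ is a unit vector. Expanding $d(p,\gamma_k(t))\ge\sqrt{d^2(x_k,p)+(t-T)^2-2(t-T)\langle\log_{x_k}p,\dot\gamma_k(T)\rangle}$ and letting $t\to\infty$ yields
\[
B_k(p)\ \ge\ -T-\langle \log_{x_k}p,\dot\gamma_k(T)\rangle .
\]
As $k\to\infty$, the right-hand side converges to $-T-\langle\log_{\bar x}p,\dot{\bar\gamma}(T)\rangle$, by continuity of $\log$ and of the geodesic flow.

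It then remains to show that this quantity tends to $\bar B(p)$ as $T\to\infty$. Equivalently, with $\bar B(\bar x)=-T$ under the definition \eqref{eq:defBFNDef}, we need
\[
\bar B(p)-\bar B(\bar x)-\langle \operatorname{grad}\bar B(\bar x),\log_{\bar x}p\rangle\to0,
\]
using $\operatorname{grad}\bar B(\bar x)=-\dot{\bar\gamma}(T)$ from Lemma~\ref{le:CharactBusFunc}. The quantity above is the linearization error of $\bar B$ at the far point $\bar x$. I would bound it via the angle at $\bar x$ between $\log_{\bar x}p$ and $-\dot{\bar\gamma}(T)$, which tends to zero as $T\to\infty$ because $d(p,\bar q)$ is fixed. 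One also has to keep $d(\bar x,p)\approx T$ under control so that the product of distance and $(1-\cos)$ still tends to zero. This is exactly where the argument of \cite[Lemma~5]{Criscitiello2025} would be adapted, and I expect it to be the delicate step. If the angle estimate is too weak, the fallback is to prove directly that $f_t(p;q,u)$ converges to $B_{q,u}(p)$ uniformly for $(q,u)$ in a compact neighbourhood of $(\bar q,\bar u)$, and then to interchange the limits in $k$ and $t$.
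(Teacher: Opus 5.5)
Your objection to the case $\bar w=0$ is correct, and it is a defect of the statement rather than of your proof. In $\mathbb{R}^n$, take $q_k=\bar q=0$ and $v_k=(-1)^k e_1/k\to 0$. Then $B_{0,v_k}(e_1)=(-1)^{k+1}$ has no limit, while $B_{0,0}(e_1)=1$. So the lemma needs the hypothesis $\bar w\neq 0$. Theorem~\ref{th:cdca} only uses the convergence of the products $\|v_k\|\,B_{q_k,v_k}(p)\to\|\bar w\|\,B_{\bar q,\bar w}(p)$. For $\bar w=0$ this follows at once from \eqref{eq:defFIWDef2b}, so nothing downstream breaks.

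For $\bar w\neq 0$, your upper bound is correct, and so is your reduction of the lower bound, which is just Lemma~\ref{le:busfunc} applied at the base point $x_k$. However, you stop at the one estimate that is actually needed, and it requires no angle rate. Apply \eqref{eq:coslaw2} at $\bar x=\exp_{\bar q}(T\bar u)$ to the points $p$ and $\bar q$. Since $\log_{\bar x}\bar q=-T\dot{\bar\gamma}(T)$ and $d(\bar x,\bar q)=T$, this gives
\[
-T-\langle\log_{\bar x}p,\dot{\bar\gamma}(T)\rangle\ \ge\ \frac{d^2(\bar x,p)-T^2-d^2(\bar q,p)}{2T}.
\]
By Proposition~\ref{pr:afcbf}, the right-hand side tends to $\bar B(p)$ as $T\to\infty$. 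Hence $\liminf_k B_k(p)\ge\bar B(p)$, and the proof is complete.

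The paper gives no argument for this lemma; it only points to \cite[Lemma~5]{Criscitiello2025} and \cite{Ballmann1995}. Once completed, your route is a self-contained proof that uses only Lemma~\ref{le:CosLawF} and Proposition~\ref{pr:afcbf}.

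You can also run the same computation at $x_k$ instead of $\bar x$. Set $L=d(x_k,p)$ and use the identity $\frac{L^2-T^2}{2T}=(L-T)+\frac{(L-T)^2}{2T}\ge L-T$. This yields the uniform estimate
\[
0\ \le\ d\bigl(p,\exp_q(Tu)\bigr)-T-B_{q,u}(p)\ \le\ \frac{d^2(p,q)}{2T},
\]
valid for every $q\in\mathbb{M}$, every unit $u\in T_q\mathbb{M}$ and every $T>0$. This is exactly your fallback. With it, the lemma follows from a three-term triangle inequality, with no need to treat $\limsup$ and $\liminf$ separately.
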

We conclude this section with a useful identity, obtained directly from \eqref{eq:defBFNDef}, which provides a practical means for computing the Busemann functions.
\begin{proposition}  \label{pr:afcbf}
Let ${q} \in  {\mathbb M}$ be a base point and    ${w} \in T_{q} {\mathbb M}$ with  $v\neq 0$.  Then, there holds
\begin{equation*} 
B_{{q}, v}(p) = \lim_{t \to+ \infty} \frac{(d^2\left({p}, \exp_{q}(tv))-(\|v\|t)^2\right)}{2\|v\|t},     \qquad  \forall p\in {\mathbb M}. 
\end{equation*}
\end{proposition}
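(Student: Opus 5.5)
The identity follows from the elementary factorization $a^2-b^2=(a-b)(a+b)$, applied with $a=d(p,\exp_q(tv))$ and $b=\|v\|t$. We will therefore write, for every $t>0$,
\begin{equation*}
\frac{d^2(p,\exp_q(tv))-(\|v\|t)^2}{2\|v\|t}=\bigl(d(p,\exp_q(tv))-\|v\|t\bigr)\cdot\frac{d(p,\exp_q(tv))+\|v\|t}{2\|v\|t},
\end{equation*}
which is legitimate because $v\neq 0$ ensures $2\|v\|t>0$. The plan is to show that each factor on the right has a finite limit as $t\to+\infty$ and then apply the product rule for limits.

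For the first factor, the definition \eqref{eq:defBFNDef} gives $\lim_{t\to+\infty}\bigl(d(p,\exp_q(tv))-\|v\|t\bigr)=B_{q,v}(p)$. This limit is finite by \eqref{eq:defFIWDef2b}. We rely on existence of this limit being part of the standing definition; classically it holds because $t\mapsto d(p,\exp_q(tv))-\|v\|t$ is nonincreasing and bounded below by $-d(p,q)$.

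For the second factor, we will reuse the estimate already obtained in the proof of Lemma~\ref{le:CharactBusFunc}. Since $d(q,\exp_q(tv))=t\|v\|$, the triangle inequality yields $t\|v\|-d(p,q)\le d(p,\exp_q(tv))\le t\|v\|+d(p,q)$. Dividing by $t$ gives $d(p,\exp_q(tv))/t\to\|v\|$, and hence
\begin{equation*}
\lim_{t\to+\infty}\frac{d(p,\exp_q(tv))+\|v\|t}{2\|v\|t}=\frac{\|v\|+\|v\|}{2\|v\|}=1.
\end{equation*}

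Multiplying the two limits gives the stated formula, namely that the limit of the quotient equals $B_{q,v}(p)\cdot 1=B_{q,v}(p)$. The only point that requires any care is that both factors converge to finite limits, so that the product rule applies. This is exactly what the bound \eqref{eq:defFIWDef2b} and the squeeze above provide, and the remainder of the argument is routine.
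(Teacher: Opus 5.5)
Your proof is correct and follows essentially the paper's approach: the paper simply says the identity is obtained directly from the definition \eqref{eq:defBFNDef}, and your difference-of-squares factorization combined with $d(p,\exp_q(tv))/t\to\|v\|$ (the same squeeze used in the proof of Lemma~\ref{le:CharactBusFunc}) is that direct argument. Your monotonicity remark also justifies the existence of the defining limit, which the paper assumes without comment.
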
 

\subsection{Examples  of  Busemann functions} 
In this section, we provide examples of the Busemann functions. We begin by introducing a fundamental property that establishes a stronger inequality than \eqref{eq:defFIWDef2b} in the case where the Busemann functions is positive. This property not only serves as motivation for the examples presented in Hadamard manifolds with identically zero sectional curvature but also plays an essential role in subsequent sections.

\begin{lemma} \label{le:busfunc}
Let ${\mathbb M}$ be a Hadamard manifold. Then the Busemann functions $B_{q, v}$ defined in \eqref{eq:defBFNDef}, associated with a base point $q \in {\mathbb M}$ and a direction $v \in T_q{\mathbb M}$, satisfies the inequality
\begin{equation} \label{eq:mainineq} 
-\left\langle v, \log_q p \right\rangle\leq \|v\|  B_{q, v}(p), \qquad \forall p \in {\mathbb M}.
\end{equation}
Moreover, if  the sectional curvature  is $K\equiv 0$ in whole ${\mathbb M}$, then inequality \eqref{eq:mainineq}  holds as equality
$\|v\| \, B_{q, v}(p) = -\left\langle v, \log_q p \right\rangle$, for all  $p \in {\mathbb M}$.
\end{lemma}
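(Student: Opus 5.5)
The plan is to use the comparison inequality \eqref{eq:coslaw2} of Lemma~\ref{le:CosLawF} together with the expression of the Busemann function in Proposition~\ref{pr:afcbf}. First I dispose of the trivial case $v=0$: both sides of \eqref{eq:mainineq} vanish, so equality holds. From now on I assume $v\neq 0$.

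Fix $p\in\mathbb M$ and $t>0$, and apply \eqref{eq:coslaw2} with $x=q$, $y=p$ and $z=\exp_q(tv)$. Since $\log_q\exp_q(tv)=tv$ and $d(q,\exp_q(tv))=t\|v\|$, this gives
\begin{equation*}
d^2(q,p)+t^2\|v\|^2-2t\langle \log_q p, v\rangle \leq d^2(p,\exp_q(tv)).
\end{equation*}
Subtract $(t\|v\|)^2$ from both sides and divide by $2\|v\|t>0$. This yields
\begin{equation*}
\frac{d^2(q,p)}{2\|v\|t}-\frac{\langle v,\log_q p\rangle}{\|v\|}\leq \frac{d^2(p,\exp_q(tv))-(\|v\|t)^2}{2\|v\|t}.
\end{equation*}
Now let $t\to+\infty$. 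The first term on the left tends to $0$, and by Proposition~\ref{pr:afcbf} the right-hand side tends to $B_{q,v}(p)$. Multiplying the resulting inequality by $\|v\|$ gives \eqref{eq:mainineq}.

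For the flat case $K\equiv 0$, Lemma~\ref{le:CosLawF} states that \eqref{eq:coslaw2} holds with equality. Every step above is then an identity, so passing to the limit gives $\|v\|B_{q,v}(p)=-\langle v,\log_q p\rangle$.

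The only delicate point is the existence of the limit in Proposition~\ref{pr:afcbf}. I take this as given, since it is stated earlier in the paper. If one prefers to avoid it, the identity $(a^2-b^2)/(2b)=(a-b)(a+b)/(2b)$ together with $(a+b)/(2b)\to 1$ lets one recover it directly from the definition \eqref{eq:defBFNDef}.
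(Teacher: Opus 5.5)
Your proof is correct and follows essentially the same route as the paper: you apply the comparison inequality \eqref{eq:coslaw2} to the triangle with vertices $q$, $p$, $\exp_q(tv)$, rearrange, divide by $t$, and pass to the limit via Proposition~\ref{pr:afcbf}, and in the flat case equality carries through every step. The only cosmetic difference is that you divide by $2\|v\|t$ and multiply by $\|v\|$ at the end, whereas the paper keeps the factor $\|v\|$ on the right-hand side throughout.
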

\begin{proof}
It is immediate that \eqref{eq:mainineq} holds for \(v=0\). We now assume \(v\neq 0\). By applying Lemma~\ref{le:CosLawF} to the geodesic triangle with vertices \(x=q\), \(y=p\), and \(z=\exp_q(tv)\), it follows that
 \begin{equation} \label{eq:coslawN}
d^2({q},{p})+ d^2({q}, {\exp_{q}(tv)})-2\left\langle \log_{{q}}{p}, \log_{{q}}{\exp_{q}(tv)}\right\rangle\leq d^2({p},{\exp_{q}(tv)}), \quad \forall t>0.
\end{equation}
Since  $\log_{{q}}{\exp_{q}(tv)}=tv$ and $d({q}, {\exp_{q}(tv)})=t\|v\|$, it  follows from the last inequality that 
\begin{equation*} 
d^2({q},{p})-2t\left\langle \log_{{q}}{p}, v\right\rangle\leq (d^2({p},{\exp_{q}(tv)})-(\|v\| t)^2), \quad \forall t>0.
\end{equation*}
After performing some  algebraic manipulations, the last inequality can be expressed as follows
$$
\frac{d^2({q},{p})}{2t}-\left\langle \log_{{q}}{p}, v\right\rangle\leq \|v\| \frac{(d^2\left({p}, \exp_{q}(tv))-(\|v\|t)^2\right)}{2\|v\|t},  \quad \forall t>0.
$$
Taking the limit in the previous inequality as \(t\to +\infty\) and using Proposition~\ref{pr:afcbf}, we obtain \eqref{eq:mainineq}. Moreover, by Lemma~\ref{le:CosLawF}, if the sectional curvature satisfies \(K\equiv 0\) on \(\mathbb{M}\), then \eqref{eq:coslawN} holds with equality. Consequently, all subsequent inequalities are equalities, which proves the desired equality.
\end{proof}
In the following example, we present an explicit formula for Busemann functions on a Hadamard manifold with identically zero sectional curvature.
\begin{example}\label{ex:defBFNDefK0}
Let \({\mathbb M}\) be a Hadamard manifold, \(q \in {\mathbb M}\) a base point, and \(v \in T_q{\mathbb M}\). If the sectional curvature of \({\mathbb M}\) is identically zero, denoted by \(K \equiv 0\), then the Busemann function \(B_{q,v}\) is given by
\begin{equation}\label{eq:defBFNDefK0}
B_{q,v}(p) \coloneqq
\begin{cases}
\displaystyle \Big\langle -\frac{v}{\|v\|},\, \log_q p \Big\rangle, 
& \forall\, v \in T_q{\mathbb M},~ v \neq 0, \\[1ex]
d(q,p), 
& v = 0.
\end{cases}
\end{equation}
Indeed, when \(K \equiv 0\) on \({\mathbb M}\), Lemma~\ref{le:busfunc} yields
\[
\|v\|\, B_{q,v}(p) = -\langle v, \log_q p \rangle,
\qquad \forall\, p \in {\mathbb M},
\]
which, together with \(B_{q,0}(p)=d(q,p)\), implies \eqref{eq:defBFNDefK0}. 
In particular, for \({\mathbb M} = {\mathbb R}^n\), we obtain
\begin{equation}\label{eq:defBFNDefEucl}
B_{q,v}(p) \coloneqq
\begin{cases}
\displaystyle -\Big\langle \frac{v}{\|v\|},\, p-q \Big\rangle, 
& \forall\, v \in T_q{\mathbb M},~ v \neq 0, \\[1ex]
\|q-p\|, 
& v = 0.
\end{cases}
\end{equation}
Since \(\log_q p = p-q\) and \(d(q,p)=\|p-q\|\) in \({\mathbb R}^n\), \eqref{eq:defBFNDefEucl} follows directly from \eqref{eq:defBFNDefK0}.
\end{example}
Next, we use \eqref{eq:defBFNDefK0} to derive an explicit expression for the Busemann functions on the positive orthant endowed with the Dikin metric. A detailed study of the positive orthant with the Dikin metric is given in \cite{Todd2002}.
\begin{example}
Let ${\mathbb M}\coloneqq (\mathbb{R}_{++}^n,G)$  be the positive orthant  $\mathbb{R}^n_{++}$ endowed  with the  Dikin metric    $\langle u,v \rangle:=u^TG(q)v$, where $u, v\in T_{q} {\mathbb M}$  and   $G(q)\in {\mathbb R}^{n \times n}$ is  the   diagonal matrix 
$$
G(q) \coloneqq {\rm diag}\big(q_{1}^{-2},\dots,q_{n}^{-2}\big), 
$$
and $q_{i}$ denotes the  $i$-th  coordinate of the point $q$.  The exponential map $\exp_{q}\colon T_{q} {\mathbb M} \to {\mathbb M}$  is given by
\begin{equation*} 
\exp_{q}(v) = \big({q}_1e^{{v_1}/{{q}_1}},\ldots,{q}_ne^{{v_n}/{{q}_n}}\big), \qquad v:=(v_1,\ldots,v_n)\in T_{q}{\mathbb M}\equiv\mathbb{R}^n.
\end{equation*}
In addition, direct calculations show that the inverse of the exponential $\log_q:{\mathbb M}\to T_{q}{\mathbb M}$ is given by 
\begin{equation*}
\log_{q} {p} = \left(q_1\ln({p_1}/{q_1}),\ldots,q_n\ln({p_n}/{q_n})\right), \qquad p:=(p_1,\ldots,p_n)\in {\mathbb M}.
\end{equation*}
Therefore, it follows from Example~\ref{ex:defBFNDefK0} that  the  Busemann functions $B_{{q}, v}$  is given by 
\begin{equation*} 
B_{{q}, v}(p):= \begin{cases}
-\frac{1}{\|v\|}\sum_{i=1}^n({v_i}/{q_i})\ln({p_i}/{{q_i}}),   &   \qquad  \forall v\in T_{q}{\mathbb M},~ v\neq 0, \\
\left(\sum_{i=1}^n \bigl[\ln(p_i/q_i)\bigr]^2\right)^{1/2},  &  \qquad v=0.
\end{cases}
\end{equation*}
\end{example}
In the following example, we provide an explicit formula for the Busemann functions on the $\kappa$-hyperbolic space form ${\mathbb H}^{n}_{\kappa}$. For the detailed computations, see Appendix~\ref{app:A3}.

\begin{example}  \label{ex:BusHype}
	Let ${\mathbb H}^{n}_{\kappa}$ be  the $\kappa$-hyperbolic space form introduced in  Section~\ref{sec:int.1}. Let ${q} \in  {\mathbb H}^{n}_{\kappa}$ be  a base point,  ${w} \in T_{q}{\mathbb H}^{n}_{\kappa}$ .  Then, the  Busemann functions $B_{{q}, v}:   {\mathbb H}^{n}_{\kappa} \to {\mathbb R}$  is given by 
\begin{equation} \label{eq:BFkSF}
B_{{q}, v}(p):= \begin{cases}
 \frac{1}{\sqrt{\kappa}}\ln\left( - \left\langle p, \kappa\,{q}+ {\sqrt{\kappa}}\,\frac{1}{\|v\|}{v}\right\rangle \right),  &   \qquad  \forall v\in T_{q}{\mathbb M},~ v\neq 0, \\
d({q},p),  &  \qquad v=0.
\end{cases}
\end{equation}
and its gradient, for $v\in T_{q}{\mathbb M}$ with  $v\neq 0$ is given by 
\begin{equation} \label{eq:grad2}
	{\rm grad} B_{{q}, v}(p)=\frac{1}{\sqrt{\kappa}}\frac{1}{\left\langle p, \kappa\,{q}+ {\sqrt{\kappa}}\frac{1}{\|v\|}{v}\right\rangle}\left(\kappa\,{q}+ {\sqrt{\kappa}}\frac{1}{\|v\|}{v}+ \kappa \left\langle \kappa\,{q}+ {\sqrt{\kappa}}\frac{1}{\|v\|}{v}, p\right\rangle \,p \right).
\end{equation}
Particularly, if $p=q$, then due to  $\langle q, q\rangle=-\frac{1}{\kappa}$ and
$\langle p, v \rangle=0$, the final equation simplifies to
\begin{equation*}
{\rm grad} B_{{q}, v}(q)= -\frac{1}{\sqrt{\kappa}}\left(\kappa\,{q}+ {\sqrt{\kappa}}\frac{1}{\|v\|}{v}- \kappa q \right)=-\frac{1}{\|v\|}{v}.
\end{equation*}
\end{example}
In the next example, we derive explicit expressions for the Busemann functions and their Riemannian gradients on the manifold of symmetric positive definite matrices described in Section~\ref{sec:int.2}. Although alternative formulas are available in the literature \cite[Proposition~10.69]{BridsonHaefliger1999}, \cite[Lemma~2.32]{hirai2023}, we introduce a new representation that is computationally cheaper and better suited for numerical applications. For completeness, Appendix~\ref{app:A4} provides a direct derivation that avoids the general theory of symmetric spaces used in previous approaches.
\begin{example}  \label{ex:BusSDP}
Let $ \mathcal{P}(n) $ be endowed with the structure of a Hadamard manifold as introduced in Section~\ref{sec:int.2}. Given $ X, Y \in \mathcal{P}(n) $ and $ V \in \mathcal{S}(n) \setminus \{0\} $, consider the spectral decomposition
\begin{equation*}\label{decom.V}
Y^{-1/2} V Y^{-1/2} = U D U^{T}, \qquad 
D := \begin{bmatrix}
\lambda_1 I_{n_1} & 0 & \cdots & 0\\
0 & \lambda_2 I_{n_2} & \cdots & 0 \\
\vdots & \vdots & \ddots & \vdots \\
0 & 0 & \cdots & \lambda_k I_{n_k}
\end{bmatrix}.
\end{equation*} 
Here, $\lambda_1, \dots, \lambda_k $ are the distinct eigenvalues of  the matrix $Y^{-1/2} V Y^{-1/2}$, which are ordered such that $\lambda_1 < \dots < \lambda_k$; $n_i$ is the multiplicity of $\lambda_i$; $I_{n_i} \in \mathbb{R}^{n_i \times n_i}$ is the identity matrix; and $U \in \mathbb{R}^{n \times n}$ is an orthogonal matrix. Let
$$
U^{T} Y^{-1/2} X Y^{-1/2} U = L L^{T}, 
$$
be the Cholesky decomposition. Then, the Busemann functions $B_{Y, V}$ evaluated at $X$ is given by
\begin{equation}\label{theo:bus.I.eq}
B_{Y, V}(X) = -2\left(  n_1\lambda_1^{2}+\cdots+n_k\lambda_k^{2}\right)^{-1/2} \sum_{i=1}^{k} \sum_{j=\alpha_{i-1} + 1}^{\alpha_i} {\lambda_i \ln\left( L_{jj} \right)}, 
\end{equation}
where $L_{jj}>0$ denotes the $j$-th diagonal entry of $L$, $\alpha_0 = 0$, and $\alpha_i = \sum_{j=1}^{i} n_j$ for $i = 1, \ldots, k$. Moreover, the Riemannian gradient of $B_{Y, V}$ at $X$ is given by
\begin{equation} \label{eq:gradbf}
{\rm grad} B_{Y, V}(X) = -\left(  n_1\lambda_1^{2}+\cdots+n_k\lambda_k^{2}\right)^{-1/2} Y^{1/2} U L D L^{T} U^{T} Y^{1/2}.
\end{equation}
\end{example}
For more explicit examples of Busemann functions, see, for instance \cite{Bento2023, BridsonHaefliger1999, hirai2023}. As noted in Remark~\ref{re:bfn1}, the examples in these references need to be adapted to fit our definition.

\section{Characterization for the subdifferential via Busemann functions} \label{sec:NewDefSub}

In this section, we provide a Busemann-function characterization of the classical subdifferential on Hadamard manifolds. More precisely, we show that the usual subgradient inequality can be equivalently expressed in terms of support functions built from Busemann functions. This viewpoint yields an intrinsic geometric representation of subgradients and equips the resulting support models with a more suitable structure for subsequent developments. In particular, the Busemann-based support function enjoys concavity  properties under our convention, a feature that will be crucial in the algorithmic analysis carried out in the next sections. 

It is well known that when the Hadamard manifold $\mathbb{M}$ has identically zero sectional curvature, the function appearing on the right-hand side of \eqref{eq:Sugd} in Definition~\ref{def:Sugd}, namely,
\begin{equation} \label{eq:Sugdlhs}
{\mathbb M} \ni p\mapsto f({{q}})+\left\langle s , \log_{ {q} } p\right\rangle,
\end{equation}
is affine in the sense that its Riemannian Hessian vanishes identically.  This affine support model is central to many developments in Euclidean nonsmooth optimization. However, when the curvature of ${\mathbb M}$ is nonzero, the function \eqref{eq:Sugdlhs} is no longer affine and, in general, it is neither geodesically convex nor geodesically concave; see \cite{Kristaly2016}.  This motivates the search for an alternative support representation that is more suitable for optimization on Hadamard manifolds. On the other hand, by Example~\ref{ex:defBFNDefK0}, when the sectional curvature of $\mathbb{M}$ is identically zero, i.e., $K \equiv 0$, we have the following equality
$$
\left\langle s , \log_{ {q} } p\right\rangle =  - \|s\| B_{q,s}(p).
$$
Although in general $\langle s, \log_q p \rangle \neq - \|s\|\,B_{q,s}(p)$, the flat-case identity suggests replacing the model \eqref{eq:Sugdlhs} by the Busemann-based support function
\begin{equation} \label{eq:Sugbf}
p \,\mapsto\, f(q) - \|s\| B_{q,s}(p), \qquad p \in \mathbb{M},
\end{equation}
with the aim of obtaining an intrinsic characterization of the classical subdifferential. In this context, Lemma~\ref{le:CharactBusFunc} shows that taking \eqref{eq:Sugbf} as a support function yields the concavity property required in our subsequent analysis. The next theorem establishes this characterization for $\sigma$-strongly convex functions; in particular, taking $\sigma=0$ recovers the convex case.

\begin{theorem}\label{th:ebesub}
Let $\mathbb{M}$ be a Hadamard manifold and let $f\colon \mathbb{M} \rightarrow \overline{\mathbb{R}}$ be a $\sigma$-strongly convex function with $\sigma\ge 0$. Then, for every $q \in \operatorname{dom}\, f$, the subdifferential of $f$ at $q$ admits the characterization
$$
 \partial f(q)=\bigl\{\, s\in T_{{q}}{\mathbb M}: ~ f({p}) \geq f({{q}})-\|s\| B_{{q},s}({p})+\tfrac{\sigma}{2}d^2(p,q), ~\forall p\in {\mathbb M}\bigr\}.
$$
\end{theorem}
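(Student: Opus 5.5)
Denote by $S(q)$ the right-hand side set in the statement. I will prove $\partial f(q)= S(q)$ by showing both inclusions. Throughout I use the strongly convex subgradient inequality from Theorem~\ref{thm:f-convex-subdiff}: for $s\in\partial f(q)$,
\[
f(p)\ge f(q)+\langle s,\log_q p\rangle+\tfrac{\sigma}{2}d^2(p,q)\quad\text{for all } p\in\mathbb M .
\]

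\emph{Inclusion $\partial f(q)\subset S(q)$.} Let $s\in\partial f(q)$. Lemma~\ref{le:busfunc}, applied with $v=s$, gives
\[
\langle s,\log_q p\rangle\ge -\|s\|B_{q,s}(p).
\]
Substituting this into the strongly convex subgradient inequality yields the Busemann inequality immediately. The case $s=0$ is trivial, since the $B$-term vanishes.

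\emph{Inclusion $S(q)\subset\partial f(q)$.} Let $s\in S(q)$. If $s=0$, the defining inequality reads $f(p)\ge f(q)+\tfrac{\sigma}{2}d^2(p,q)\ge f(q)$, so $0\in\partial f(q)$ by Definition~\ref{def:Sugd}. Now assume $s\neq0$. By Proposition~\ref{pr:sfsd} it suffices to show $f'(q,v)\ge\langle s,v\rangle$ for every $v\in T_q\mathbb M$.

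Fix $v$ and put $p=\exp_q(tv)$ with $t>0$. The inequality defining $S(q)$ gives
\[
\frac{f(\exp_q(tv))-f(q)}{t}\ge -\|s\|\,\frac{B_{q,s}(\exp_q(tv))-B_{q,s}(q)}{t}+\tfrac{\sigma}{2}t\|v\|^2 .
\]
Here I used $B_{q,s}(q)=0$, which follows from Lemma~\ref{eq:pbfu} with $\tau=0$. Now let $t\downarrow0$. The left side tends to $f'(q,v)$; it may equal $+\infty$ if $v\notin{\rm dom}\,f'(q,\cdot)$, and then the inequality is trivial. On the right, $B_{q,s}$ is $C^1$ by Lemma~\ref{le:CharactBusFunc}, so the difference quotient tends to
\[
\langle{\rm grad}\,B_{q,s}(q),v\rangle=-\langle s,v\rangle/\|s\|.
\]
Hence the right side tends to $\langle s,v\rangle$, which gives $f'(q,v)\ge\langle s,v\rangle$. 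Thus $s\in\partial f(q)$ by Proposition~\ref{pr:sfsd}.

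The main point needing care is this last step. Proposition~\ref{pr:sfsd} characterizes the ordinary subdifferential of a convex function, and $f$ is convex because $\sigma$-strongly convex implies convex. So that characterization gives $s\in\partial f(q)$ in the sense of Definition~\ref{def:Sugd}. Theorem~\ref{thm:f-convex-subdiff} then states that for a $\sigma$-strongly convex $f$ every such $s$ automatically satisfies the strengthened inequality, so the two descriptions of $\partial f(q)$ agree. I also need $f'(q,v)$ to be well defined in $\overline{\mathbb R}$. This follows from monotonicity of difference quotients of the convex function $t\mapsto f(\exp_q(tv))$, as in \cite[Definition 4.1]{Udriste1994}.
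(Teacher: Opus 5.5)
Your proof is correct and is essentially the paper's argument. The inclusion $\partial f(q)\subseteq S(q)$ combines Theorem~\ref{thm:f-convex-subdiff} with Lemma~\ref{le:busfunc}, and the reverse inclusion divides the Busemann inequality along $\exp_q(tv)$ by $t$, uses ${\rm grad}\,B_{q,s}(q)=-s/\|s\|$, and invokes Proposition~\ref{pr:sfsd}, exactly as in the paper; your closing appeal to Theorem~\ref{thm:f-convex-subdiff} is unnecessary there, since membership in $\partial f(q)$ is just Definition~\ref{def:Sugd}, but it does no harm.
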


\begin{proof}
We begin by proving 
$
\partial f(q) \subseteq \bigl\{\, s\in T_{{q}}{\mathbb M}:~ f({p}) \geq f({{q}})-\|s\| B_{{q},s}({p})+\frac{\sigma}{2}d^2(p,q), ~\forall p\in {\mathbb M}\bigr\}.
$
Let $s \in \partial f(q)$. Since $f$ is $\sigma$-strongly convex, by Theorem~\ref{thm:f-convex-subdiff} we have $f(p) \ge f(q) + \langle s,\log_q p\rangle + \tfrac{\sigma}{2}d^2(p,q)$, for all $p \in \mathbb{M}$.  On the other hand, Lemma~\ref{le:busfunc} guarantees that
$
-\|s\|\, B_{q,s}(p) \leq \langle s, \log_q p \rangle,
$
 for all $p \in \mathbb{M}$. Combining the last two inequalities gives $f(p) \ge f(q) - \|s\|\,B_{q,s}(p) + \tfrac{\sigma}{2}d^2(p,q)$,  for all $p\in\mathbb{M}$,  which shows that
 \begin{equation} \label{eq: fistinclusion}
  s\in \bigl\{\, s\in T_{{q}}{\mathbb M}:~ f({p}) \geq f({{q}})-\|s\| B_{{q},s}({p})+\tfrac{\sigma}{2}d^2(p,q), ~\forall p\in {\mathbb M}\bigr\}.
 \end{equation} 
 and the first inclusion follows. We now prove the reverse inclusion.  For that, take 
$$
s \in \bigl\{\, s\in T_{{q}}{\mathbb M}: ~ f({p}) \geq f({{q}})-\|s\| B_{{q},s}({p})+\tfrac{\sigma}{2}d^2(p,q), ~\forall p\in {\mathbb M}\bigr\}.
$$
If $s=0$, then $f(p)\geq f(q)+\frac{\sigma}{2}d^2(p,q)\geq f(q)$ for all $p\in\mathbb{M}$, which implies that  $0\in \partial f(q)$. Assume now that $s\neq 0$ and let $v\in T_q\mathbb{M}$. If $v\notin{\rm dom}\, f'(q,\cdot)$, then $f'(q;v)=+\infty$ and thus we conclude that 
$\langle s,v\rangle \le f'(q;v)$. Otherwise, take $v\in{\rm dom}\, f'(q,\cdot)$. Then there exists ${\hat t}>0$ such that
$\exp_q(tv)\in{\rm dom}\,f$ for all $t\in[0,{\hat t})$, and
$$
f(\exp_q(tv)) \ge f(q)-\|s\|\,B_{q,s}(\exp_q(tv))+\tfrac{\sigma}{2}d^2(\exp_q(tv),q), \quad \forall t\in[0,{\hat t}).
$$
Since $d(\exp_q(tv),q)=t\|v\|$ and $B_{q,s}(q)=0$, dividing both sides by $t$ yields
$$
\frac{f(\exp_q(tv))-f(q)}{t} \ge -\|s\|\,\frac{B_{q,s}(\exp_q(tv))-B_{q,s}(q)}{t}+\frac{\sigma}{2}\,t\|v\|^2,  \quad \forall t\in[0,{\hat t}).
$$
Taking the limit as $t$ goes to $0^+$ and using Lemma~\ref{le:CharactBusFunc}, namely ${\rm grad}\,B_{q,s}(q)=-s/\|s\|$, we obtain
$$
f'(q;v)\ge -\|s\|\langle {\rm grad}\,B_{q,s}(q),v\rangle=\langle s,v\rangle.
$$
Therefore, for all $v\in T_q\mathbb{M}$ we have $\langle s,v\rangle \le f'(q;v)$. Hence, by Proposition~\ref{pr:sfsd}, we conclude that
$s\in \partial f(q)$, which together with \eqref{eq: fistinclusion} completes the proof.
\end{proof}

As a direct consequence of Theorem~\ref{th:ebesub}, we obtain a variational characterization of subgradients in terms of global minimizers of a Busemann-based support model, with an additional quadratic term accounting for $\sigma$-strong convexity.
\begin{corollary}\label{cor:argmin-support}
Let $\mathbb{M}$ be a Hadamard manifold, let $f\colon \mathbb{M} \rightarrow \overline{\mathbb{R}}$ be a $\sigma$-strongly convex function with $\sigma\ge 0$, and let $q\in\operatorname{dom}\,f$. Then,  $s\in\partial f(q)$ if and only if
$$
q\in\operatorname*{argmin}_{p\in\mathbb{M}}\bigl\{\, f(p)+\|s\|B_{q,s}(p)-\tfrac{\sigma}{2}d^2(p,q)\,\bigr\}.
$$
As a consequence, if $s\in\partial f(q)$, then
$
f(q)=\min_{p\in\mathbb{M}}\bigl\{\,f(p)+\|s\|B_{q,s}(p)-\tfrac{\sigma}{2}d^2(p,q)\,\bigr\}.
$
\end{corollary}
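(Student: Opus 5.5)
The plan is to read the corollary directly off Theorem~\ref{th:ebesub} by rewriting the support inequality as a minimality statement. Fix $q\in\operatorname{dom}\,f$, $s\in T_q\mathbb{M}$, and set
$$\phi_s(p):=f(p)+\|s\|B_{q,s}(p)-\tfrac{\sigma}{2}d^2(p,q),\qquad p\in\mathbb{M}.$$
This function takes values in $\overline{\mathbb{R}}$. The key observation is that $\phi_s(q)=f(q)$, because $B_{q,s}(q)=0$ and $d(q,q)=0$. The first fact follows from Lemma~\ref{eq:pbfu} with $\tau=0$ when $s\neq0$, and from $B_{q,0}(q)=d(q,q)=0$ when $s=0$.

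Next, the inequality in Theorem~\ref{th:ebesub}, namely $f(p)\ge f(q)-\|s\|B_{q,s}(p)+\tfrac{\sigma}{2}d^2(p,q)$ for all $p$, is equivalent to $\phi_s(p)\ge f(q)=\phi_s(q)$ for all $p$. The rearrangement is legitimate even when $f(p)=+\infty$. Indeed, $B_{q,s}(p)$ and $d^2(p,q)$ are finite real numbers, so adding or subtracting them does not create any indeterminate form; the conventions \eqref{eq:conventions} are not even needed, since both sides are $+\infty$ in that case. Hence $s\in\partial f(q)$ holds iff $\phi_s(p)\ge\phi_s(q)$ for every $p\in\mathbb{M}$, that is, iff $q\in\operatorname{argmin}\phi_s$.

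The consequence then follows immediately: if $s\in\partial f(q)$, the minimum of $\phi_s$ is attained at $q$, so $\min_p\phi_s(p)=\phi_s(q)=f(q)$.

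There is no real obstacle here. The only points needing care are the evaluation $B_{q,s}(q)=0$, including the degenerate case $s=0$, and checking that the rearrangement is valid with extended-real values. Both are routine.
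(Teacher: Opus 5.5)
Your proposal is correct and follows essentially the same route as the paper: the paper likewise defines the auxiliary function $\psi(p)=f(p)+\|s\|B_{q,s}(p)-\tfrac{\sigma}{2}d^2(p,q)$, observes $\psi(q)=f(q)$, and reads both directions of the equivalence directly off Theorem~\ref{th:ebesub}. Your extra checks, namely $B_{q,s}(q)=0$ in the case $s=0$ and the validity of the rearrangement when $f(p)=+\infty$, are details the paper leaves implicit.
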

\begin{proof}
Let $q\in\operatorname{dom}\, f$ and $s\in T_q\mathbb{M}$, and define the auxiliary function $\psi: \mathbb{M} \to \overline{\mathbb R}$ by
$$
\psi(p):=f(p)+\|s\|B_{q,s}(p)-\tfrac{\sigma}{2}d^2(p,q), \qquad p\in\mathbb{M}.
$$
Since $B_{q,s}(q)=0$ and $d(q,q)=0$, we have $\psi(q)=f(q)$. Assume first that $s\in\partial f(q)$. Thus, by Theorem~\ref{th:ebesub} we have
$$
f(p)\ge f(q)-\|s\|B_{q,s}(p)+\tfrac{\sigma}{2}d^2(p,q), \qquad \forall p\in\mathbb{M}.
$$
Rearranging and using the definition of $\psi$, we obtain $\psi(p)\ge \psi(q)$ for all $p\in\mathbb{M}$, which implies that
$q\in\operatorname*{argmin}_{p\in\mathbb{M}}\psi(p)$.

For the converse,  assume that $q\in\operatorname*{argmin}_{p\in\mathbb{M}}\psi(p)$. Then $\psi(p)\ge \psi(q)$ for all $p\in\mathbb{M}$, that is, $f(p)+\|s\|B_{q,s}(p)-\tfrac{\sigma}{2}d^2(p,q)\ge f(q)$, for all $p\in\mathbb{M}$, which is equivalent to
$$
f(p)\ge f(q)-\|s\|B_{q,s}(p)+\tfrac{\sigma}{2}d^2(p,q), \qquad \forall p\in\mathbb{M}.
$$
By Theorem~\ref{th:ebesub}, the latter implies $s \in \partial f(q)$ and the equivalence is proved.

For the last statement, if $s\in\partial f(q)$, then by the first part we have
$q\in\operatorname*{argmin}_{p\in\mathbb{M}}\psi(p)$, hence $\inf_{p\in\mathbb{M}}\psi(p)=\psi(q)$.
Since $\psi(q)=f(q)$, the conclusion follows.
\end{proof}

We next present  Corollary~\ref{cor:argmin-support} in perspective by comparing its variational characterization with the
Busemann-subgradient notion of \cite{Goodwin2024}, emphasizing the flat case where they coincide and the nonflat case where
they may differ.

\begin{remark}\label{re:argmin-support-busemannsubgrad}
In the context of Hadamard manifolds, Corollary~\ref{cor:argmin-support} is closely related in nature to the notion of
\emph{Busemann subgradient} in \cite[Definition~3.1]{Goodwin2024}, although the resulting supporting objects differ in general.
In the convex case $\sigma=0$, Corollary~\ref{cor:argmin-support} states that $s\in\partial f(q)$ if and only if
\begin{equation}\label{eq:char1}
q\in\arg \min_{p\in\mathbb{M}}\bigl\{\,f(p)+\|s\|\,B_{q,s}(p)\,\bigr\}.
\end{equation}
On the other hand, in the Hadamard manifold setting one may express \cite[Definition~3.1]{Goodwin2024} in our framework as follows:
a Busemann subgradient at a point  $x\in{\rm dom}\,f$ can be represented by a vector $\xi\in T_x\mathbb{M}$ such that
\begin{equation}\label{eq:char2}
x\in\arg\min_{y\in\mathbb{M}}\bigl\{\,f(y)-\|\xi\|\,B_{x,-\xi}(y)\,\bigr\}, 
\end{equation}
see also  \cite[Definition~1]{Criscitiello2025}. In the \emph{flat case} (sectional curvature identically zero), Example~\ref{ex:defBFNDefK0} yields
$$
\|s\|\,B_{q,s}(p)=-\langle s,\log_q p\rangle,
\qquad
\|\xi\|\,B_{x,-\xi}(y)=\langle \xi,\log_x y\rangle,
$$
and $B_{x,-\xi}=-B_{x,\xi}$. Hence \eqref{eq:char2} is equivalent to
$x\in\arg\min_{y\in \mathbb{M}}\{\,f(y)+\|\xi\|\,B_{x,\xi}(y)\,\}$, which coincides with \eqref{eq:char1} after the identification $q=x$ and
$s=\xi$. In particular, both notions recover the classical Euclidean supporting-hyperplane condition and the corresponding supports
coincide. In contrast, on \emph{nonflat} Hadamard manifolds \eqref{eq:char1} and \eqref{eq:char2} are not equivalent in general.
A key point is that \cite{Goodwin2024} requires a \emph{global} support parameterized by an ideal direction and a speed, and such a
support may fail to exist even for geodesically convex functions; see \cite[Example~3.2]{Goodwin2024}.
Consequently, if one defines the $b$-subdifferential at $x$ by
$
\partial^b f(x):=\bigl\{\,\xi\in T_x\mathbb{M}:~ \eqref{eq:char2}\ \text{holds}\,\bigr\},
$
or equivalently by 
$
\partial^b f(x):=\bigl\{\,\xi\in T_x\mathbb{M}: ~f(y)\ge f(x)+\|s\|B_{x,-\xi}(y), \forall y\in \mathbb{M} \bigr\},
$
then $\partial^b f(x)\subset \partial f(x)$, and this inclusion can be strict beyond the flat case.
\end{remark}

As a further consequence of Theorem~\ref{th:ebesub}, we recover the following classical bound linking Lipschitz continuity and the norm of subgradients.
\begin{corollary}\label{cor:subgrad-bound-lipschitz}
Let $f:\mathbb{M}\to\mathbb{R}$ be convex and $L$-Lipschitz on $\mathbb{M}$, i.e.,
$|f(x)-f(y)|\le L\,d(x,y)$ for all $x,y\in\mathbb{M}$. Then, for every $q\in\operatorname{dom}\,f$ and every $s\in\partial f(q)$, there holds $\|s\|\le L.$
\end{corollary}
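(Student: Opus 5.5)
The bound will follow from the Busemann characterization in Theorem~\ref{th:ebesub} with $\sigma=0$, by evaluating the support inequality along the geodesic that defines the Busemann function. Fix $q\in\operatorname{dom}\,f=\mathbb{M}$ and $s\in\partial f(q)$. If $s=0$ there is nothing to prove, so assume $s\neq 0$.

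\textbf{Step 1 (support inequality).} By Theorem~\ref{th:ebesub} with $\sigma=0$,
\[
f(p)\ge f(q)-\|s\|\,B_{q,s}(p)\quad\text{for all } p\in\mathbb{M}.
\]

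\textbf{Step 2 (choice of test point).} Take $p_\tau:=\exp_q(-\tau s)$ with $\tau>0$; this is the geodesic defining $B_{q,s}$, run backwards. Lemma~\ref{eq:pbfu} gives $B_{q,s}(p_\tau)=-\tau\|s\|$. Hence
\[
f(p_\tau)\ge f(q)+\tau\|s\|^2 .
\]

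\textbf{Step 3 (Lipschitz bound).} Since $d(p_\tau,q)=\tau\|s\|$, the Lipschitz hypothesis gives
\[
f(p_\tau)-f(q)\le L\,\tau\|s\|.
\]
Combining with Step 2 yields $\tau\|s\|^2\le L\tau\|s\|$. Dividing by $\tau\|s\|>0$ gives $\|s\|\le L$.

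There is essentially no obstacle. The only point needing care is the sign convention in Lemma~\ref{eq:pbfu}: $B_{q,s}(\exp_q(\tau s))=\tau\|s\|$ holds for every real $\tau$, including negative values. This is what makes the support inequality grow linearly at rate $\|s\|^2$ along $-s$.

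Alternatively, one can skip the Busemann machinery and use the classical inequality~\eqref{eq:Sugd} with $p=\exp_q(s)$. This gives $\|s\|^2\le f(p)-f(q)\le L\|s\|$ directly. The Busemann route is presented because the statement is framed as a consequence of Theorem~\ref{th:ebesub}.
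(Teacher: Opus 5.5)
Your main (Busemann) argument fails at Step~2, because the sign you take from Lemma~\ref{eq:pbfu} is wrong. Work straight from the definition \eqref{eq:defBFNDef}. For $\tau>0$ and $t>0$, the points $\exp_q(-\tau s)$ and $\exp_q(ts)$ lie on one geodesic at distance $(t+\tau)\|s\|$. Hence $B_{q,s}(\exp_q(-\tau s))=\lim_{t\to+\infty}\bigl((t+\tau)\|s\|-t\|s\|\bigr)=+\tau\|s\|$. More generally, $B_{q,s}(\exp_q(\tau s))=-\tau\|s\|$ for every real $\tau$; the lemma as printed has the sign flipped. This is the only sign compatible with ${\rm grad}\,B_{q,s}(q)=-s/\|s\|$ in Lemma~\ref{le:CharactBusFunc} and with the flat formula \eqref{eq:defBFNDefK0}. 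So at your test point, Theorem~\ref{th:ebesub} only gives $f(p_\tau)\ge f(q)-\tau\|s\|^2$. Together with the Lipschitz bound, this places no restriction on $\|s\|$. The Busemann support $p\mapsto f(q)-\|s\|B_{q,s}(p)$ increases in the direction $+s$, not $-s$. Its gradient at $q$ is $s$, and along $\tau\mapsto\exp_q(\tau s)$ it coincides with the classical model $f(q)+\langle s,\log_q p\rangle=f(q)+\tau\|s\|^2$.

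The fix is exactly what the paper does: take $p_\tau:=\exp_q(\tau s)$. Then $-\|s\|B_{q,s}(p_\tau)=\tau\|s\|^2$, so $f(p_\tau)\ge f(q)+\tau\|s\|^2$; this is the value the paper's proof actually uses, despite citing Lemma~\ref{eq:pbfu}. Your Step~3 then goes through unchanged, since $d(q,p_\tau)=\tau\|s\|$ here as well. Your closing alternative, using \eqref{eq:Sugd} with $p=\exp_q(s)$, is correct and complete as it stands. It is also more elementary than the paper's route: it needs neither Theorem~\ref{th:ebesub} nor any Busemann computation, and so avoids exactly the sign issue that derailed your main argument.
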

\begin{proof}
Fix $q\in\operatorname{dom}\,f$ and $s\in\partial f(q)$. If $s=0$, the conclusion is trivial. Assume $s\neq 0$, and for $\tau>0$ set $p_\tau:=\exp_q(\tau s)$.
By Theorem~\ref{th:ebesub} and Lemma~\ref{eq:pbfu}, we have
$$
f(p_\tau)\ge f(q)-\|s\|B_{q,s}(p_\tau)=f(q)+\tau\|s\|^2.
$$
On the other hand, considering that $f$ is  a  $L$-Lipschitz function  on $\mathbb{M}$ we conclude  that 
$$
f(p_\tau)\le f(q)+L\,d(q,p_\tau)=f(q)+L\tau\|s\|.
$$
Combining the two inequalities and dividing by $\tau>0$ yields $\|s\|^2\le L\|s\|$, hence $\|s\|\le L$.
\end{proof}

As a further application of Theorem~\ref{th:ebesub}, we obtain an intrinsic characterization of $\sigma$-strong convexity in terms of Busemann-based support inequalities.
\begin{proposition}\label{pr:ccfc}
Let  $f\colon \mathbb M \rightarrow \overline{\mathbb{R}}$ be a function. Then $f$ is  $\sigma$-strongly convex with $\sigma\ge 0$ if and only if ${\rm dom}\, f$ is convex and, for every $q\in{\rm dom}\, f$, there exists ${v}\in T_q\mathbb M$ such that
$$
f(p)\ge f(q)-\|v\|\,B_{q,{v}}(p)+\tfrac{\sigma}{2}d^2(p,q), \qquad \forall p\in\mathbb M.
$$
In this case, $\partial f(q)\neq\emptyset$ for all $q\in{\rm dom}\, f$, and the inequality holds for every ${v}\in\partial f(q)$.
\end{proposition}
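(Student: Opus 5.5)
The plan is to reduce Proposition~\ref{pr:ccfc} to Theorem~\ref{thm:f-convex-subdiff} (the classical characterization via $\log_q p$), using Theorem~\ref{th:ebesub} and Lemma~\ref{le:busfunc} to pass between the classical support inequality and the Busemann-based one.

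\emph{Necessity.} Assume $f$ is $\sigma$-strongly convex. By Theorem~\ref{thm:f-convex-subdiff}, ${\rm dom}\,f$ is convex and $\partial f(q)\neq\emptyset$ for every $q\in{\rm dom}\,f$. Theorem~\ref{th:ebesub} then shows that every $v\in\partial f(q)$ satisfies
$$
f(p)\ge f(q)-\|v\|B_{q,v}(p)+\tfrac{\sigma}{2}d^2(p,q), \qquad \forall p\in\mathbb M.
$$
This gives the existence claim, and it also gives the final assertion that the inequality holds for every element of $\partial f(q)$.

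\emph{Sufficiency.} Assume ${\rm dom}\,f$ is convex and that for each $q\in{\rm dom}\,f$ some $v\in T_q\mathbb M$ satisfies the Busemann inequality. Lemma~\ref{le:busfunc} gives $-\|v\|B_{q,v}(p)\le\langle v,\log_q p\rangle$ for all $p$. The key point is that this inequality goes in the wrong direction: it only lets us deduce the Busemann bound from the classical one, not conversely. So the classical inequality cannot be read off directly, and this is the main obstacle.

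To get around it, I would argue directly from the definition of $\sigma$-strong convexity rather than through $\partial f$, since Theorem~\ref{th:ebesub} presupposes that $f$ is already convex. The reason this works is that Lemma~\ref{le:CharactBusFunc} makes $B_{q,v}$ convex, so $p\mapsto f(q)-\|v\|B_{q,v}(p)$ is geodesically concave.

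Fix $x,y\in{\rm dom}\,f$ and $t\in(0,1)$, and set $z:=\gamma_{xy}(t)$. The point $z$ lies in ${\rm dom}\,f$ because the domain is convex. Take $v$ from the hypothesis at $q=z$ and apply the inequality at $p=x$ and $p=y$. Form the convex combination with weights $1-t$ and $t$. Convexity of $B_{z,v}$ along $\gamma_{xy}$ gives
$$
(1-t)B_{z,v}(x)+tB_{z,v}(y)\ge B_{z,v}(z)=0,
$$
so the Busemann terms drop out with the correct sign.

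For the quadratic terms, $z$ splits the geodesic so that $d(x,z)=t\,d(x,y)$ and $d(y,z)=(1-t)d(x,y)$. Hence
$$
(1-t)d^2(x,z)+t\,d^2(y,z)=t(1-t)d^2(x,y).
$$
Combining these,
$$
(1-t)f(x)+tf(y)\ge f(z)+\tfrac{\sigma}{2}t(1-t)d^2(x,y),
$$
which is exactly $\sigma$-strong convexity. The cases where $x$ or $y$ lies outside ${\rm dom}\,f$ are trivial under the conventions \eqref{eq:conventions}.

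Once convexity is established, the remaining assertions ($\partial f(q)\ne\emptyset$, and the inequality holding for all $v\in\partial f(q)$) follow as in the necessity part. Moreover, Theorem~\ref{th:ebesub} now identifies the given $v$ itself as an element of $\partial f(q)$.
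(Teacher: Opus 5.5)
Your necessity argument is the same as the paper's and is fine. The sufficiency argument breaks at the step where you say the Busemann terms ``drop out with the correct sign''. Combining the two support inequalities at $z=\gamma_{xy}(t)$ actually gives
\[
(1-t)f(x)+tf(y)\ \ge\ f(z)-\|v\|\bigl[(1-t)B_{z,v}(x)+tB_{z,v}(y)\bigr]+\tfrac{\sigma}{2}t(1-t)d^2(x,y).
\]
Convexity of $B_{z,v}$ says the bracket is $\ge B_{z,v}(z)=0$. So the middle term is $\le 0$: it lowers the right-hand side and cannot be discarded. To drop it you would need $(1-t)B_{z,v}(x)+tB_{z,v}(y)\le 0$, i.e.\ concavity of $B_{z,v}$ along $\gamma_{xy}$ at $z$. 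In the classical proof of Theorem~\ref{thm:f-convex-subdiff}, the corresponding term $(1-t)\langle v,\log_z x\rangle+t\langle v,\log_z y\rangle$ vanishes exactly. That is because $\log_z x$ and $\log_z y$ are antiparallel with lengths $t\,d(x,y)$ and $(1-t)\,d(x,y)$. The Busemann term is merely nonnegative, and in general strictly positive when the curvature is nonzero. So the concavity of the Busemann support, which you cite as the reason the argument works, is exactly why it fails: concave minorants touching $f$ at every point do not force $f$ to be convex.

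This gap cannot be closed, because the ``if'' direction is false for $\sigma=0$ on nonflat manifolds. In the upper half-plane model of $\mathbb H^2$, take $f(x,y)=\ln y$. At $q=(x_0,y_0)$ let $v$ be the upward unit vector. Then $d\bigl(p,\exp_q(tv)\bigr)-t\to\ln(y_0/y)$, so $B_{q,v}(p)=\ln(y_0/y)$ and $f(q)-\|v\|B_{q,v}(p)=\ln y=f(p)$ for every $p$. Thus the hypothesis holds, with equality, at every point. Yet along the unit-speed geodesic $s\mapsto(\tanh s,1/\cosh s)$ one has $f=-\ln\cosh s$, which is strictly concave. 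More generally, $f=-B_{q_0,w}$ works on any Hadamard manifold where $B_{q_0,w}$ is not affine, since Busemann functions of asymptotic rays differ by constants.

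Your remark that Theorem~\ref{th:ebesub} presupposes convexity pinpoints the flaw in the paper's own sufficiency proof. The paper invokes Theorem~\ref{th:ebesub} to conclude $v\in\partial f(q)$ before convexity is known. But the reverse inclusion there relies on directional derivatives and on Proposition~\ref{pr:sfsd}, both of which require convexity. Only the ``only if'' part survives, together with the flat case, where the Busemann support coincides with the classical one.
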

\begin{proof}
First assume that $f$ is $\sigma$-strongly convex. Then,  by Theorem~\ref{thm:f-convex-subdiff}, ${\rm dom}\,f$ is convex and we have
$\partial f(q)\neq\emptyset$ for all $q\in{\rm dom}\,f$. Fix $q\in{\rm dom}\,f$ and take $v\in\partial f(q)$. Applying Theorem~\ref{th:ebesub}
 we obtain
$$
f(p)\ge f(q)-\|{v}\|\,B_{q,{v}}(p)+\tfrac{\sigma}{2}d^2(p,q), \qquad \forall\,p\in\mathbb{M},
$$
which in particular shows the existence of such a vector at each $q$.

Conversely, assume that ${\rm dom}\,f$ is convex and that for every $q\in{\rm dom}\,f$ there exists ${v}\in T_q\mathbb{M}$ such that
$$
f(p)\ge f(q)-\|{v}\|\,B_{q,{v}}(p)+\tfrac{\sigma}{2}d^2(p,q), \qquad \forall\,p\in\mathbb{M}.
$$
By Theorem~\ref{th:ebesub}, it follows that ${v}\in\partial f(q)$ for every $q\in{\rm dom}\,f$.
In particular, $\partial f(q)\neq\emptyset$ for all $q\in{\rm dom}\,f$. Hence, by Definition~\ref{def:Sugd}, for each $q\in{\rm dom}\,f$
there exists $u\in T_q\mathbb{M}$ (namely, $u:={v}$) such that
$$
f(p)\ge f(q)+\big\langle u,\log_q p\big\rangle+\tfrac{\sigma}{2}d^2(p,q), \qquad \forall\,p\in\mathbb{M}.
$$
Applying Theorem~\ref{thm:f-convex-subdiff}, we conclude that $f$ is $\sigma$-strongly convex. Finally, once $\sigma$-strong convexity is established,
Theorem~\ref{th:ebesub} ensures that the Busemann support inequality holds for every ${v}\in\partial f(q)$.
\end{proof}

We conclude this section by comparing our Busemann support characterization of geodesic convexity with horospherical convexity \cite{Criscitiello2025}, noting that they coincide in the flat case but may differ in the nonflat case.

\begin{remark}\label{re:ccfc-hconvex}
Proposition~\ref{pr:ccfc} is conceptually different from the horospherical support inequality in \cite[Definition~1]{Criscitiello2025}, although it plays an analogous supporting-role; it provides a  geodesic convexity characterization via Busemann-based supports. Indeed, when $\sigma=0$, Proposition~\ref{pr:ccfc} asserts that for every $q\in{\rm dom}\,f$ there exists $v\in T_q\mathbb{M}$ such that $f(p)-f(q)\geq -\|v\|\,B_{q,v}(p)$, for all $p\in\mathbb{M}$.  For comparison,  the defining inequality of $h$-convexity in \cite[Eq.~(6)]{Criscitiello2025} can be written as $f(p)-f(q)\ge\|v\|\,B_{q,-v}(p)$. Moreover, by Example~\ref{ex:defBFNDefK0}, in the \emph{flat case} (sectional curvature identically zero) one has
$
B_{q,v}(p)=\langle v,\log_q p\rangle,
$
and, in this setting, $B_{q,-v}=-B_{q,v}$, so the two support inequalities coincide and both reduce to the classical affine support characterization of convexity in $\mathbb{R}^n$. In contrast, on \emph{nonflat} Hadamard manifolds the antisymmetry $B_{q,-v}=-B_{q,v}$ generally fails, so horospherical supports built from $B_{q,-v}$ and the Busemann supports in Proposition~\ref{pr:ccfc}, which are expressed in terms of $-B_{q,v}$, need not coincide; see \cite[Sec.~3.5]{Criscitiello2025} for a conceptual discussion of the genuinely global nature of $h$-convexity. Finally, Proposition~\ref{pr:ccfc} guarantees that, in the  convex case, the support inequality holds for \emph{every} Riemannian subgradient $v\in\partial f(q)$, whereas the horospherical subdifferential $\partial^h f(q)$ in \cite{Criscitiello2025} consists only of those directions producing a global horospherical support. In fact, we can show that $\partial^h f(q)\subset \partial f(q)$, and this inclusion can be strict beyond the flat case.
\end{remark}

\section{The Busemann DC Algorithm for DC optimization} \label{sec:OptProhm}
In this section, we develop a Busemann-based DC algorithm for DC optimization on Hadamard manifolds.
The main  idea is to replace the standard linearization step in classical DC schemes by a Busemann-type support term,
thereby producing geodesically convex subproblems that better reflect the ambient geometry.
This yields a geometric DCA framework and a practical algorithmic tool for DC programs on Hadamard manifolds. We consider the {\it difference-of-convex (DC)} optimization problem on a Hadamard manifold $\mathbb{M}$, defined by
\begin{equation}\label{Pr:DCproblem}
{\rm argmin}_{p\in {\mathbb M}} \,\phi(p), \qquad \text{where}\quad  \phi(p)\coloneqq g(p)-h(p),
\end{equation}
where $g,h\colon{\mathbb M}\to\overline{\mathbb{R}}$ are proper, lower semicontinuous, and geodesically convex. Despite $\phi$ being the difference of two convex functions, $\phi$ generally does not exhibit convexity. However, we can show that $\phi$ is locally Lipschitz on the interior of its domain.  Consequently, $\phi$ possesses a subdifferential in the Clarke sense, denoted by $\partial^o{\phi}$, as detailed in \cite{Bento2010, Bento2015}. Moreover, it can be demonstrated that $\partial^o\phi(p)\subset \partial g(p)-\partial h(p)$. Therefore, a requisite condition for a point $p^{*}\in {\mathbb M}$ to qualify as a local minimum of $\phi = g-h$ is that $0\in  \partial^o  \phi(p^{*})   \subset \partial g(p^{*})-\partial h(p^{*})$. 
Thus,  leading us to define a \textit{critical point} of problem~\eqref{Pr:DCproblem} as follows: 

\begin{definition}%
\label{def:critpoint}
A $p^{*}\in {\mathbb M}$ is called  a critical point for the problem \eqref{Pr:DCproblem} if $\partial g(p^{*}) \cap \partial h(p^{*}) \neq\varnothing$.
\end{definition}

\noindent
For further discussion on the definition of a  critical point for  problem \eqref{Pr:DCproblem}, see \cite{DeOliveira2020} for example. In the following two sections, we employ the Busemann functions to introduce and analyze two variants of the EDCA for solving problem~\eqref{Pr:DCproblem}. In addition to properness, lower semicontinuity, and convexity, we work under the following assumptions:
   \begin{enumerate}[label={A\arabic*)}, ref={(A\arabic*)}]
     \item%
    \label{it:A1} {\it ${\mathbb M}$ is  a  Hadamard   manifold}.
    \item%
    \label{it:A2} ${\phi}_{\inf}\coloneqq \displaystyle\inf_{x\in{\mathbb M}}
         \phi (x) > -\infty$;
    \item%
    \label{it:A3}  ${\rm dom }\, g$ and  ${\rm dom}\,h$ are convex  and ${\rm dom }\, g \subseteq \mbox{int} \, {\rm dom} \, h.$
    \end{enumerate}
    Firstly, we address the assumptions outlined above.  Assumption~\ref{it:A1} is fundamental for the analysis of the algorithms proposed in the following sections, since it is repeatedly used through Theorem~\ref{th:ebesub}. Under assumption \ref{it:A2}, the domain of $\phi$ equals the domain of $g$, that is, ${\rm dom}\, \phi = {\rm dom }\, g \subseteq {\rm dom}\, h$. This inclusion is derived from the fact that if ${\rm dom } \,g \not\subseteq {\rm dom} \, h$, then there exists a point $p \in {\rm dom } \, g$ such that $p \notin {\rm dom}\, h$. According to \eqref{eq:conventions}, this situation implies $\phi(p) = g(p) - h(p) = g(p) - (+\infty) = -\infty$, which is in contradiction with assumption \ref{it:A2}. Consequently, we have ${\rm dom } \, g \subseteq {\rm dom}\, h$, and thus ${\rm dom } \,g \subseteq {\rm dom}\, \phi$. 

 Conversely, assume for contradiction that ${\rm dom}\, \phi \not\subseteq {\rm dom } \, g$. Then, there must be some $p \in {\rm dom}\, \phi$ for which $g(p) = +\infty$. Utilizing \eqref{eq:conventions} again, we find $\phi(p) = g(p) - h(p) = (+\infty) - h(p) = +\infty$. However, this contradicts the definition of the domain of $\phi$, since $p$ would not be included in ${\rm dom} \, \phi$ if $\phi(p)$ were infinity. Thus, ${\rm dom}\, \phi \subseteq {\rm dom }\, g$. Therefore, ${\rm dom}\, \phi = {\rm dom } \, g$, and since ${\rm dom } \, g \subseteq {\rm dom}\, h$ under assumption \ref{it:A2}, it follows that assumption \ref{it:A3} is only marginally more restrictive than \ref{it:A2}. Additionally, if ${\rm dom} \, h = \mathbb{M}$, then assumption \ref{it:A3} is naturally satisfied.

\subsection{The Busemann DC algorithm  } \label{sec:DCA}
In this subsection, we begin by revisiting the classical difference-of-convex algorithm on Hadamard manifolds, introduced and analyzed in \cite{Bergmann2024} as a manifold counterpart of the EDCA. Since the function involved in the subproblem  of the classic DCA on Hadamard manifolds is not  convex  in general, it presents challenges in seeking solutions. Here, by using Busemann functions, we propose a new version of this method to overcome this limitation. Specifically, the function in the subproblem is now convex, enabling more effective optimization within the Riemannian context. To propose and analyze the method, we assume that the functions in problem \eqref{Pr:DCproblem} satisfy the following hypothesis:
\begin{enumerate}[label={(H\arabic*)}, ref={(H\arabic*)}]
    \item%
    \label{it:H1} $g\colon{\mathbb M} \to \overline{\mathbb{R}}$  and $h\colon{\mathbb M} \to \overline{\mathbb{R}}$ are $\sigma$-strongly convex and lsc, where $\sigma>0$;
        \end{enumerate}
We begin by demonstrating that \ref{it:H1} is not restrictive. In fact,  let $q \in {\mathbb M}$ and $\sigma >0$. Consider the function ${\mathbb M}\ni p \mapsto ({\sigma}/{2})d^{2}(q, p)$, which is $\sigma$-strongly convex, as demonstrated in~\cite[Corollary 3.1]{NetoFerreiraPerez2002}. If $\tilde g \colon{\mathbb M}\to \overline{\mathbb{R}}$ and $\tilde h \colon{\mathbb M}\to \overline{\mathbb{R}}$ are convex, then by choosing $q\in {\mathbb M}$ and defining $g(p)={\tilde g}(p) + ({\sigma}/{2})d^{2}(q, p)$ and $h(p)={\tilde h}(p)+ ({\sigma}/{2})d^{2}(q, p)$, we obtain two $\sigma$-strongly convex functions $g$ and $h$ on ${\mathbb M}$. Furthermore, it holds that $\phi ={\tilde g}-{\tilde h}=g-h$. However, it is imperative to caution users of the Riemannian difference of convex algorithm proposed below that the selection of the parameter $\sigma>0$ significantly influences the convergence rate of the methods employed to solve the subproblem, as well as the overall method. 

Next, we revisit a Riemannian version of the EDCA, which, using the same terminology, we refer to it as the {\it classic Riemannian Difference of Convex Algorithm} (CR-DCA). This algorithm is used to solve the DC problem \eqref{Pr:DCproblem}, which is formally stated as follows:

\begin{algorithm}[H]
	\begin{footnotesize}
	\begin{description}
		 \item[Step 0.] Set $p_0 \in {\rm dom } \, g$ and  set $k \gets 0$;
		 \item[Step 1.]    Take $s_{k}\in\partial h(p_{k})$, and define the next iterate $p_{k+1}$  as follows 
		  \begin{equation} \label{eq:DCAS}
                p_{k+1}={\rm arg min}_{p\in {\mathbb M}}
                    \left(g(p)-\left\langle s_{k}, \log_{p_{k}}p\right\rangle\right).
            \end{equation}
		 \item[Step 2.]  If $p_{k+1}=p_{k}$, then {\bf stop} and return the point $p_{k}$. Otherwise, set $k\gets k+1$ and go to \textbf{Step~1}.
\end{description}
	\caption{Classic Riemannian difference of convex algorithm (CR-DCA)}
	\label{Alg:DCAC}
	\end{footnotesize}
\end{algorithm}  

It can be shown that in  the Euclidean setting, the inverse of the exponential mapping is   given by ${\mathbb M}\ni p \mapsto \log_{p_{k}}p=p-p_{k}$. Therefore, Algorithm~\ref{Alg:DCAC},  represents a version  in Hadamard manifold of the EDCA. The motivation behind introducing the EDCA lies in the fact that the function ${\mathbb M}\ni p \mapsto g(p)-\left\langle s_{k}, p-p_{k}\right\rangle$ in the subproblem \eqref{eq:DCAS}   in the Euclidean setting  is convex. By doing so, one replaces the solution of the non-convex problem \eqref{Pr:DCproblem} with the solution of a sequence of convex subproblems. However, as discussed in Section~\ref{sec:NewDefSub}, the function in the subproblem \eqref{eq:DCAS} is not  convex  on general Hadamard manifolds, posing challenges in its solution.  To address this limitation,  we can redefine the CR-DCA by employing a Busemann functions, wherein the function in the subproblem counterpart of \eqref{eq:DCAS} becomes convex. The formulation of the {\it Busemann Difference of Convex Algorithm} (B-DCA) to solve the DC problem \eqref{Pr:DCproblem} is outlined below:

\begin{algorithm}[H]
	\begin{footnotesize}
	\begin{description}
		 \item[Step 0.] Set $p_0\in {\rm dom } \,g$ and  set $k \gets 0$;
		 \item[Step 1.]    Take $s_{k}\in\partial h(p_{k})$, and define the next iterate $p_{k+1}$  as follows 
		  \begin{equation} \label{eq:B-DCAS}
                p_{k+1}:={\rm arg min}_{p\in {\mathbb M}}
                    \left(g(p)+\|s_k\|B_{p_k, s_k}(p)\right).
            \end{equation}
		 \item[Step 2.]  If $p_{k+1}=p_{k}$, then {\bf stop} and return the point $p_{k}$. Otherwise, set $k\gets k+1$ and go to \textbf{Step~1}.
\end{description}
	\caption{Busemann difference of convex algorithm (B-DCA) }
	\label{Alg:B-DCAC}
	\end{footnotesize}
\end{algorithm}  
It is noteworthy that in the case where the curvature of the Hadamard manifold is $K=0$, Example~\ref{ex:defBFNDefK0} establishes the equivalence between Algorithm~\ref{Alg:DCAC} and Algorithm~\ref{Alg:B-DCAC}. In particular,    Example~\ref{ex:defBFNDefK0} serves to illustrate that B-DCA aligns with EDCA.    Furthermore, comparing the objective in subproblem~\eqref{eq:DCAS} with that in subproblem~\eqref{eq:B-DCAS}, we introduce the function ${\phi}_{k}\colon{\mathbb M}\to \overline{\mathbb{R}}$ defined by
\begin{equation}\label{eq:afwd}
{\phi}_{k}(p)\coloneqq g(p)+\|s_k\|\,B_{p_k, s_k}(p).
\end{equation}
Then, in view of Assumption~\ref{it:H1}, the function ${\phi}_{k}$ is  $\sigma$-strongly convex.  Before proceeding with the analysis of Algorithm~\ref{Alg:B-DCAC}, it is imperative to acknowledge that the point $p_{k+1}$, as a solution of \eqref{eq:B-DCAS}, satisfies the following inequality:
\begin{equation}\label{eq:sol}
g(p) + \|s_k\| B_{p_k, s_k}(p) \geq g(p_{k+1}) + \|s_k\| B_{p_k, s_k}(p_{k+1})
\qquad \forall p \in {\mathbb M}.
\end{equation}
To further advance the analysis of Algorithm~\ref{Alg:B-DCAC}, we first establish that the algorithm is well defined. This fundamental property is addressed in the following proposition.
\begin{proposition} \label{pr:WellDefB-DCA}
    Algorithm~\ref{Alg:B-DCAC} is well defined, i.e., $p_{k}\in \mbox{dom }\,g$ for all $k=0,1,\ldots$, and the subproblem in  \eqref{eq:B-DCAS} has a unique solution. Moreover, if $p_{k+1} =p_{k}$, then $p_{k}$ is a critical point of $\phi$.
\end{proposition}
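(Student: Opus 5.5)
The argument goes by induction on $k$, and at each step I would establish three things: $s_k$ exists, the subproblem \eqref{eq:B-DCAS} has exactly one minimizer, and that minimizer lies in ${\rm dom}\,g$.

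\emph{Existence of $s_k$.} Suppose $p_k\in{\rm dom}\,g$; this holds for $k=0$ by Step~0. Assumption~\ref{it:A3} gives $p_k\in{\rm int}\,{\rm dom}\,h$, and $h$ is convex by \ref{it:H1}, so $\partial h(p_k)\neq\varnothing$. Hence $s_k$ can be chosen.

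\emph{Existence of a minimizer.} By Lemma~\ref{le:CharactBusFunc}, $B_{p_k,s_k}$ is convex and continuous; when $s_k=0$ the term $\|s_k\|B_{p_k,s_k}$ vanishes. Together with \ref{it:H1}, this makes $\phi_k$ in \eqref{eq:afwd} lsc and $\sigma$-strongly convex with $\sigma>0$. I will show $\phi_k$ is coercive and then apply Proposition~\ref{prop:CoeFunc}. This coercivity is the one step that is not routine. Fix $\bar p\in{\rm dom}\,g$ and $v\in\partial g(\bar p)$; the subdifferential is nonempty by Theorem~\ref{thm:f-convex-subdiff}. Theorem~\ref{thm:f-convex-subdiff} then gives
\[
g(p)\ge g(\bar p)-\|v\|\,d(\bar p,p)+\tfrac{\sigma}{2}d^2(\bar p,p).
\]
For the Busemann term, \eqref{eq:defFIWDef2b} and the triangle inequality give
\[
\|s_k\|B_{p_k,s_k}(p)\ge -\|s_k\|\bigl(d(p_k,\bar p)+d(\bar p,p)\bigr).
\]
Adding these, $\phi_k(p)$ is bounded below by a quadratic in $d(\bar p,p)$ with positive leading coefficient, so $\phi_k(p)\to+\infty$ as $d(\bar p,p)\to+\infty$. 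Since $\phi_k$ is proper (it is finite on ${\rm dom}\,g$), Proposition~\ref{prop:CoeFunc} yields a global minimizer.

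\emph{Uniqueness.} Strict convexity of $\phi_k$, which follows from strong convexity with $\sigma>0$, gives uniqueness. If $p,p'$ were two distinct minimizers, $\phi_k$ at the midpoint of the geodesic $\gamma_{pp'}$ would be strictly smaller than the minimum value, a contradiction.

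\emph{Membership in the domain.} The minimum value is at most $\phi_k(p_k)=g(p_k)<+\infty$, using $B_{p_k,s_k}(p_k)=0$. Therefore $p_{k+1}\in{\rm dom}\,g$, which closes the induction.

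\emph{Criticality at termination.} Suppose $p_{k+1}=p_k$. Then $p_k$ minimizes $\phi_k=g+\|s_k\|B_{p_k,s_k}$, so $0\in\partial\phi_k(p_k)$. Because $B_{p_k,s_k}$ is finite and convex on all of $\mathbb M$, and $p_k\in{\rm int}\,{\rm dom}\,(\|s_k\|B_{p_k,s_k})=\mathbb M$, the sum rule in Proposition~\ref{pr:sfsd} applies and gives
\[
\partial\phi_k(p_k)=\partial g(p_k)+\|s_k\|\,\partial B_{p_k,s_k}(p_k).
\]
By Lemma~\ref{le:CharactBusFunc}, $B_{p_k,s_k}$ is differentiable with gradient $-s_k/\|s_k\|$ at $p_k$, so $\|s_k\|\,\partial B_{p_k,s_k}(p_k)=\{-s_k\}$; if $s_k=0$ the term is $\{0\}$. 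Hence $0\in\partial g(p_k)-s_k$, that is, $s_k\in\partial g(p_k)\cap\partial h(p_k)$, and $p_k$ is a critical point in the sense of Definition~\ref{def:critpoint}.

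The same conclusion follows without the sum rule. Since $g(p)\ge g(p_k)-\|s_k\|B_{p_k,s_k}(p)$ for all $p\in\mathbb M$, Theorem~\ref{th:ebesub} with $\sigma=0$ gives $s_k\in\partial g(p_k)$ directly.
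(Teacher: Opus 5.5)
Your proof is correct and follows essentially the same route as the paper. The shared steps are: nonemptiness of $\partial h(p_k)$ from \ref{it:A3}; coercivity of $\phi_k$ from the strong-convexity lower bound on $g$ combined with $|B_{p_k,s_k}(p)|\le d(p_k,p)$ (the paper anchors at $\bar p=p_k$, which makes your extra triangle-inequality step unnecessary); existence via Proposition~\ref{prop:CoeFunc}; uniqueness via strong convexity; and criticality at termination via Theorem~\ref{th:ebesub}, exactly as in your closing alternative. Your sum-rule argument for criticality is a valid extra route the paper does not use, and your check that $p_{k+1}\in{\rm dom}\,g$ via $\phi_k(p_k)=g(p_k)$ makes explicit a step the paper leaves implicit.
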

\begin{proof}
    Assume that $p_{k}\in {\rm dom } \,g$.  Using assumption~\ref{it:A3} we conclude that $p_{k}\in \mbox{int} \, {\rm dom} \, h$. Thus, by \cite[Theorem~3.3]{Ferreira2002},   we have $\partial h(p_{k})\neq \varnothing$. Let $s_{k}\in \partial h(p_{k})$. Now we are going to show that the subproblem in \eqref{eq:B-DCAS} has a unique solution. Since, by \ref{it:H1}, $g\colon {\mathbb M} \to\mathbb{R}$ is $\sigma$-strongly convex, it follows from Theorem~\ref{thm:f-convex-subdiff} that 
    \begin{equation*}
     g(p)\geq g({p_k}) + \langle {v},\log_{p_k}p\rangle + \frac{\sigma}{2}d^2({p_k},p),\qquad \forall p \in {\mathbb M}, ~\forall {v}\in \partial g({p_k}). 
    \end{equation*} 
  Thus, by considering ${\phi}_{k}(p)= g(p)+\|s_k\|B_{p_k, s_k}(p)$ and employing the last inequality, we deduce 
  \begin{equation} \label{eq:dcgkf} 
  \frac{{\phi}_{k}(p)}{d({p_k},p)}\geq \frac{g({p_k})}{d({p_k},p)} + \Big\langle {v},\frac{\log_{p_k}p}{d({p_k},p)}\Big\rangle+ \frac{\sigma}{2}d({p_k},p) + \|s_k\|\frac{B_{p_k, s_k}(p)}{{d({p_k},p)}}, \qquad \forall p \in {\mathbb M}, \forall {v}\in \partial g({p_k}). 
  \end{equation}
   Observe that \eqref{eq:defFIWDef2b} yields $|B_{p_k, s_k}(p)|\leq d(p_k, p)$, for all $p\in {\mathbb M}$, and $d({p_k},p)=\|\log_{p_k}p\|$. Thus, taking the limit in \eqref{eq:dcgkf} we obtain that $\lim_{d({p_k},p)\to+\infty} {{\phi}_{k}(p)}/{{d({p_k},p)}} = +\infty$. In particular, we conclude that $ \lim_{d({p_k},p)\to+\infty } {{\phi}_{k}(p)} = +\infty$. Hence, using Proposition~\ref{prop:CoeFunc}  we conclude that  ${\phi}_{k}$ has  a  global minimizer.  Therefore, the subproblem in  \eqref{eq:B-DCAS} has a global  solution and,  due to  ${\phi}_{k}$  be  $\sigma$-strongly convex,   the solution is unique.  Consequently, there exists a unique  $p_{k+1}\in {\rm dom }\, g={\rm dom} \, \phi$  satisfying \eqref{eq:B-DCAS}, which implies that  Algorithm~\ref{Alg:B-DCAC} is well defined. To prove the last statement, we assume that $p_{k+1} =p_{k}$.    Thus, \eqref{eq:sol} implies that $g(p)\geq  g(p_{k}) -\|s_k\| B_{p_k, s_k}(p)$, for all $p\in {\mathbb M}$, which by using Theorem~\ref{th:ebesub}  shows that $s_{k}\in \partial g(p_{k})$. Hence, taking into account that $s_{k}\in\partial h(p_{k})$, we conclude that $s_{k}\in \partial  g(p_{k}) \cap \partial  h(p_{k}) \neq~\varnothing$. Therefore, it follows from Definition~\ref{def:critpoint} that $p_{k}$ is a critical point of $\phi$ in problem~\eqref{Pr:DCproblem}.
\end{proof}

Drawing on Proposition~\ref{pr:WellDefB-DCA}, we establish the well-defined nature of Algorithm~\ref{Alg:B-DCAC}, resulting in the generation of a sequence $(p_{k})_{k\in \mathbb{N}}$. Given that the termination condition implies the critical point of $\phi$ is attained in the final iteration, {\it we anticipate this sequence will  be infinite, an assumption  adopted henceforth}.

\subsection{Convergence analysis}
In this section, we conduct a detailed analysis of the B-DCA, designated as Algorithm~\ref{Alg:B-DCAC} under the assumptions \ref{it:A1}, \ref{it:A2}, \ref{it:A3} and \ref{it:H1}. The theoretical results obtained herein correspond to those obtained for the EDCA and CR-DCA versions. Nonetheless, it is noteworthy that in the B-DCA variant, the utilization of the Busemann functions renders the subproblem convex, a notable departure from the CR-DCA version. This  fundamental  advancement simplifies the process of solving subproblems, given that convex problems have significantly lower computational complexity than non-convex ones.  We begin by demonstrating a descent property of the algorithm and establishing that the distance  between   consecutive iterates converges  to zero as $k$ tends to infinity.

\begin{proposition}\label{pr:ffrdp}
Let $(p_k)_{k\in \mathbb{N}}$ be a sequence generated by Algorithm~\ref{Alg:B-DCAC}. Then, it satisfies the inequality
\begin{equation}\label{eq:dsc}
\phi(p_{k+1}) \leq \phi(p_{k}) - \frac{\sigma}{2}d^{2}(p_{k},p_{k+1}), \quad \forall k\in \mathbb{N}.
\end{equation}
As a consequence, the sequence $(\phi(p_{k}))_{k\in \mathbb{N}}$ is strictly decreasing and converges. Moreover, we have $\lim_{k\to +\infty} d(p_{k},p_{k+1}) = 0$.
\end{proposition}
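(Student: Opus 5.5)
The plan is to prove the descent inequality \eqref{eq:dsc} by combining two support inequalities: the strong-convexity optimality of $p_{k+1}$ for the subproblem \eqref{eq:B-DCAS}, and the Busemann support inequality for $h$ given by Theorem~\ref{th:ebesub}.

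\textbf{Step 1: optimality of $p_{k+1}$.} By Proposition~\ref{pr:WellDefB-DCA}, $p_{k+1}$ is the unique minimizer of the $\sigma$-strongly convex function $\phi_k$ in \eqref{eq:afwd}. Hence $0\in\partial\phi_k(p_{k+1})$. Applying Theorem~\ref{thm:f-convex-subdiff} at $p_{k+1}$ with subgradient $0$ gives
$$\phi_k(p)\ge \phi_k(p_{k+1})+\tfrac{\sigma}{2}d^2(p,p_{k+1}) \quad \text{for all } p.$$
Taking $p=p_k$ and using $B_{p_k,s_k}(p_k)=0$, we get
$$g(p_k)\ge g(p_{k+1})+\|s_k\|B_{p_k,s_k}(p_{k+1})+\tfrac{\sigma}{2}d^2(p_k,p_{k+1}).$$

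\textbf{Step 2: support inequality for $h$.} Since $s_k\in\partial h(p_k)$ and $h$ is convex (indeed $\sigma$-strongly convex), Theorem~\ref{th:ebesub} applied to $h$ at $q=p_k$, using only the convex part (we may take $\sigma=0$ there, or keep the extra nonnegative term and discard it), gives
$$h(p_{k+1})\ge h(p_k)-\|s_k\|B_{p_k,s_k}(p_{k+1}),$$
that is, $-\|s_k\|B_{p_k,s_k}(p_{k+1})\le h(p_{k+1})-h(p_k)$. Substituting this into Step 1 yields
$$g(p_k)\ge g(p_{k+1})-h(p_{k+1})+h(p_k)+\tfrac{\sigma}{2}d^2(p_k,p_{k+1}),$$
which is exactly \eqref{eq:dsc}. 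All quantities are finite because $p_k,p_{k+1}\in{\rm dom}\,g\subseteq{\rm int}\,{\rm dom}\,h$ by \ref{it:A3}, so the arithmetic is legitimate. If one keeps the $\sigma$ term from $h$ as well, the constant improves to $\sigma$, but $\sigma/2$ is what is claimed.

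\textbf{Step 3: consequences.} Since the sequence is infinite, $p_{k+1}\neq p_k$ for every $k$, so \eqref{eq:dsc} gives strict decrease. By \ref{it:A2} the sequence $(\phi(p_k))$ is bounded below by $\phi_{\inf}$, hence it converges. Summing \eqref{eq:dsc} telescopically gives
$$\tfrac{\sigma}{2}\sum_{k} d^2(p_k,p_{k+1})\le \phi(p_0)-\phi_{\inf}<\infty,$$
so $d(p_k,p_{k+1})\to0$.

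The only delicate point is Step 1, namely justifying the quadratic growth of $\phi_k$ around its minimizer. This follows from $0\in\partial\phi_k(p_{k+1})$ together with Theorem~\ref{thm:f-convex-subdiff} for $\sigma$-strongly convex functions. The fact that $0$ is a subgradient at the minimizer is immediate from Definition~\ref{def:Sugd}.
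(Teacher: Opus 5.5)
Your proof is correct. It shares the paper's skeleton: evaluate the subproblem optimality at $p=p_k$, then add the Busemann support inequality for $h$ from Theorem~\ref{th:ebesub}. The difference is that the quadratic term comes from the other function.

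The paper uses only plain minimality \eqref{eq:sol} of $p_{k+1}$, namely $g(p_k)\ge g(p_{k+1})+\|s_k\|B_{p_k,s_k}(p_{k+1})$. It then extracts $\frac{\sigma}{2}d^2(p_k,p_{k+1})$ from the $\sigma$-strong convexity of $h$, via Theorem~\ref{th:ebesub} with $\sigma>0$.

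You instead take the quadratic growth of the $\sigma$-strongly convex subproblem objective $\phi_k$ at its minimizer. This uses $0\in\partial\phi_k(p_{k+1})$, Theorem~\ref{thm:f-convex-subdiff}, and convexity of $B_{p_k,s_k}$ from Lemma~\ref{le:CharactBusFunc}. As a result you only need the $\sigma=0$ case of Theorem~\ref{th:ebesub} for $h$.

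So the paper's descent step relies on strong convexity of $h$ alone, while yours relies on strong convexity of $g$ alone. As you observe, keeping both contributions yields the sharper constant $\sigma$ in place of $\sigma/2$. Your telescoping argument for $d(p_k,p_{k+1})\to 0$ is equivalent to the paper's passage to the limit in \eqref{eq:CmIneq1}.
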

\begin{proof}
Considering that   $d(p_k, p_k)=0$, from  \eqref{eq:defFIWDef2b}  we have   $B_{p_k, s_k}(p_{k})=0$. Thus,  using the inequality in~\eqref{eq:sol} with  $p=p_{k}$ we conclude that 
$
g(p_{k})  \geq g(p_{k+1}) + \|s_k\| B_{p_k, s_k}(p_{k+1}).
$
Moreover, taking into account that   $h$ is   a $\sigma$-strongly convex function and $s_{k}\in \partial h(p_{k})$, it follows from  Theorem~\ref{th:ebesub}  that 
$
h(p_{k+1}) \geq  h({p_{k}}) -\|s_k\| B_{p_k, s_k}(p_{k+1})+(\sigma/2)d^2(p_{k+1},p_{k}).
$
Thus, combining the  two  previous inequalities and using that  $\phi=g-h$, we derive~\eqref{eq:dsc}.  To verify the second statement, we first observe that \eqref{eq:dsc} entails the inequality 
\begin{equation} \label{eq:CmIneq1}
0\leq (\sigma/2)d^{2}(p_{k},p_{k+1})\leq \phi(p_{k})-\phi(p_{k+1}), \qquad \forall k\in \mathbb{N}.
\end{equation}
Since we are assuming that  $(p_{k})_{k\in {\mathbb N}}$ is  infinite,   it follows from Proposition~\ref{pr:WellDefB-DCA} that
$p_{k+1}\neq p_{k}$.   Hence,  considering that $\sigma>0$, we have  from
\eqref{eq:CmIneq1} that  $\phi(p_{k+1})<\phi( p_{k})$,  for all $k\in
\mathbb{N}$. Therefore,   $(\phi(p_{k}))_{k\in {\mathbb N}}$  is strictly
decreasing.  Furthermore,  due to  \ref{it:A2}  implying
that $(\phi(p_{k}))_{k\in {\mathbb N}}$ is bounded from below, we conclude that
it converges.  Since   $(\phi(p_{k}))_{k\in {\mathbb N}}$   converges, taking the limit in \eqref{eq:CmIneq1} we obtain that $\lim_{k\to +\infty}
d(p_{k},p_{k+1})=0$, which concludes the proof.
\end{proof}

The following theorem is the main finding regarding the convergence behavior of Algorithm~\ref{Alg:B-DCAC}, detailing the limiting behavior of the produced sequences and establishing their relationship with critical points of the objective function.
\begin{theorem} \label{th:cdca}
    Let $(p_{k})_{k\in \mathbb N}$ and $(s_{k})_{k\in \mathbb N}$ be generated
    by Algorithm~\ref{Alg:B-DCAC}. If ${\bar p}$ is a cluster point of $(p_{k})_{k\in \mathbb{N}}$,
    then ${\bar p}\in {\rm dom } \, g$ and there exists a cluster point ${\bar s}$ of
    $(s_{k})_{k\in \mathbb N}$ such that
    ${\bar s}\in \partial g({\bar p})\cap \partial h({\bar p})$.
    Consequently, every cluster point of $(p_{k})_{k\in \mathbb{N}}$,
    if any, is a critical point of $\phi$.
\end{theorem}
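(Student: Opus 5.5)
Let $(p_{k_j})_{j\in\mathbb N}$ be a subsequence with $\lim_{j\to+\infty}p_{k_j}=\bar p$. The plan has three steps: show $\bar p\in{\rm dom}\,g$, extract a limit $\bar s\in\partial h(\bar p)$ of the subgradients, and pass to the limit in the optimality inequality \eqref{eq:sol}, so that Theorem~\ref{th:ebesub} yields $\bar s\in\partial g(\bar p)$.

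\textbf{Step 1: $\bar p\in{\rm dom}\,g$.} By Proposition~\ref{pr:ffrdp}, $\phi(p_{k})\le\phi(p_0)$ for all $k$. Write $g(p_{k_j})=\phi(p_{k_j})+h(p_{k_j})$. Suppose $\bar p\in{\rm int}\,{\rm dom}\,h$. Then $h$, being convex, is continuous (locally Lipschitz) near $\bar p$. Hence $g(p_{k_j})$ is bounded above, and lower semicontinuity gives $g(\bar p)\le\liminf_j g(p_{k_j})<+\infty$.

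The difficulty is justifying $\bar p\in{\rm int}\,{\rm dom}\,h$ a priori. I would argue as follows. The points $p_{k_j}$ lie in ${\rm dom}\,g\subseteq{\rm int}\,{\rm dom}\,h$. If $\bar p$ lay on the boundary of ${\rm dom}\,h$, then lsc of $h$ and convexity would force $h(p_{k_j})$ to stay bounded, while $g(p_{k_j})\ge\phi_{\inf}+h(p_{k_j})$. In the common case ${\rm dom}\,h=\mathbb M$ the issue disappears. In general I would invoke \ref{it:A3} together with lsc of $g$: boundedness of $g(p_{k_j})$ yields $\bar p\in{\rm dom}\,g\subseteq{\rm int}\,{\rm dom}\,h$. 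I expect this domain bookkeeping to be the main technical nuisance; if necessary I would note that it reduces to the finite-valued case.

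\textbf{Step 2: a limit subgradient.} Since $p_{k_j}\to\bar p\in{\rm int}\,{\rm dom}\,h$ and $s_{k_j}\in\partial h(p_{k_j})$, Proposition~\ref{cont_subdif} says $(s_{k_j})$ is bounded and its cluster points lie in $\partial h(\bar p)$. Passing to a further subsequence, we may take $s_{k_j}\to\bar s\in\partial h(\bar p)$, where convergence of tangent vectors at varying base points is understood in $T\mathbb M$.

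\textbf{Step 3: passing to the limit in \eqref{eq:sol}.} For every $p\in\mathbb M$,
$$
g(p)+\|s_{k_j}\|B_{p_{k_j},s_{k_j}}(p)\ \ge\ g(p_{k_j+1})+\|s_{k_j}\|B_{p_{k_j},s_{k_j}}(p_{k_j+1}).
$$
By Proposition~\ref{pr:ffrdp}, $d(p_{k_j},p_{k_j+1})\to0$, so $p_{k_j+1}\to\bar p$. By \eqref{eq:defFIWDef2b}, $|B_{p_{k_j},s_{k_j}}(p_{k_j+1})|\le d(p_{k_j},p_{k_j+1})\to0$, and since $\|s_{k_j}\|$ is bounded, the last term tends to $0$. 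By lsc of $g$, $\liminf_j g(p_{k_j+1})\ge g(\bar p)$. On the left-hand side, Lemma~\ref{le:cbf} gives $B_{p_{k_j},s_{k_j}}(p)\to B_{\bar p,\bar s}(p)$.

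One subtlety arises when $\bar s=0$: then $B_{\bar p,0}(p)=d(\bar p,p)$ need not be the limit of $B_{p_{k_j},s_{k_j}}(p)$. However, the product $\|s_{k_j}\|B_{p_{k_j},s_{k_j}}(p)$ still tends to $0=\|\bar s\|B_{\bar p,\bar s}(p)$, because $|B|\le d(p_{k_j},p)$ is bounded. So the product converges in all cases.

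Taking $\liminf$ therefore gives $g(p)\ge g(\bar p)-\|\bar s\|B_{\bar p,\bar s}(p)$ for all $p\in\mathbb M$. Since $g$ is convex (this is the case $\sigma=0$), Theorem~\ref{th:ebesub} yields $\bar s\in\partial g(\bar p)$. Hence $\bar s\in\partial g(\bar p)\cap\partial h(\bar p)$, and $\bar p$ is critical by Definition~\ref{def:critpoint}.
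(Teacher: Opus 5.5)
Your Steps~2 and~3 follow the paper's route. You use Proposition~\ref{cont_subdif} to get $\bar s\in\partial h(\bar p)$, take a $\liminf$ in \eqref{eq:sol} with \eqref{eq:defFIWDef2b} and Lemma~\ref{le:cbf}, and close with Theorem~\ref{th:ebesub}. Step~1, however, has a genuine gap.

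\textbf{The gap in Step~1.} The argument is circular. Bounding $g(p_{k_j})=\phi(p_{k_j})+h(p_{k_j})$ from above needs an upper bound on $h(p_{k_j})$. You only get that from continuity of $h$ at $\bar p$, i.e.\ from $\bar p\in{\rm int}\,{\rm dom}\,h$. That is exactly what you then try to deduce from $\bar p\in{\rm dom}\,g$. Your side claim that lower semicontinuity and convexity ``force $h(p_{k_j})$ to stay bounded'' near a boundary point is also false: lsc only bounds $h$ from below.

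\textbf{The gap cannot be closed.} No argument repairs Step~1 under \ref{it:A1}--\ref{it:A3} and \ref{it:H1} alone, because the conclusion $\bar p\in{\rm dom}\,g$ fails there. Take $\mathbb M=\mathbb R$, $\sigma>0$, and for $x>0$ set $g(x)=\frac1x+x+\frac{\sigma}{2}x^2$ and $h(x)=\frac1x+\frac{\sigma}{2}x^2$, with $g=h=+\infty$ on $(-\infty,0]$.
\begin{itemize}
\item Both functions are lsc and $\sigma$-strongly convex.
\item ${\rm dom}\,g={\rm dom}\,h=(0,\infty)$ is open, so \ref{it:A3} holds.
\item $\phi(x)=x$ on $(0,\infty)$, so $\phi_{\inf}=0$.
\end{itemize}
In this flat case B-DCA is the Euclidean DCA, so $p_{k+1}$ is the unique solution of $g'(p_{k+1})=h'(p_k)$. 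Since $g'$ is strictly increasing and $g'(p_k)=h'(p_k)+1$, we get $p_{k+1}<p_k$. A positive limit $\bar p$ would give $g'(\bar p)=h'(\bar p)$, i.e.\ $1=0$. Hence $p_k\downarrow 0\notin{\rm dom}\,g$, and $s_k=h'(p_k)\to-\infty$ has no cluster point.

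\textbf{The paper has the same hole.} Its identity $\phi(p_0)+h(\bar p)=\liminf_k(\phi(p_0)+h(p_k))$ does not follow from lower semicontinuity, which yields only $\le$. Cancelling $h(\bar p)$ afterwards also requires $h(\bar p)<+\infty$. The theorem needs an extra hypothesis, for instance ${\rm dom}\,h=\mathbb M$ or ${\rm dom}\,g$ closed. Under either one, your Step~1 is correct: $\bar p\in{\rm int}\,{\rm dom}\,h$, so $h$ is continuous at $\bar p$, $g(p_{k_j})$ is bounded, and lsc of $g$ gives $\bar p\in{\rm dom}\,g$. The rest of your proof then goes through.

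\textbf{Where you improve on the paper.} In Step~3 you are more careful than the paper. Lemma~\ref{le:cbf} is false for $\bar w=0$: with $v_k=u/k$ and $\|u\|=1$, one has $B_{\bar q,v_k}=B_{\bar q,u}\neq d(\bar q,\cdot)$. The paper applies the lemma without comment. Your bound $\|s_{k_j}\|\,|B_{p_{k_j},s_{k_j}}(p)|\le\|s_{k_j}\|\,d(p_{k_j},p)\to0$ settles that case correctly.
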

\begin{proof}
    Let ${\bar p}\in {\mathbb M}$ be a cluster point of the sequence $(p_{k})_{k\in \mathbb{N}}$. We may assume, without loss of generality, that  $\lim_{k \to +\infty}p_{k}={\bar p}$.  It follows from Proposition~\ref{pr:ffrdp}  that $(\phi(p_{k}))_{k\in \mathbb N}$ is strictly decreasing  and convergent. Moreover, due to $\phi(p_{0})\geq \phi(p_{k})=g(p_{k})-h(p_{k})$  we have  $\phi(p_{0})+h(p_{k})\geq g(p_{k})$. Thus, considering that  $g$ and $h$ are  lsc, we have    
    \begin{equation*}
      \phi(p_{0})+h ({\bar p})=  \liminf_{k\to +\infty}( \phi(p_{0})+h(p_{k})) \geq \liminf_{k\to +\infty} g(p_{k}) \geq  g({\bar p}), 
    \end{equation*}
    which  implies that $ \phi(p_{0})\geq  \phi({\bar p})$. Thus, due to $p_0\in {\rm dom } \, g= {\rm dom} \, \phi$ we conclude that  ${\bar p}\in {\rm dom} \, \phi={\rm dom } \,g$.  Hence, using  that  ${\bar p}\in{\rm dom }\, g$  and \ref{it:A3}, we  also have  ${\bar p}\in {\rm int} \, {\rm dom} \, h$.  By Proposition~\ref{pr:WellDefB-DCA} together with  \ref{it:A3}, we know that $(p_{k})_{k\in \mathbb N} \in {\rm int} \, \mbox{dom} \, h$, and {\bf Step~1} implies that   $s_{k}\in\partial  h(p_{k})$, for all $k\in \mathbb{N}$.  Therefore, if necessary, by considering a subsequence, we can assume, without loss of generality, by invoking Proposition~\ref{cont_subdif}, that  $\lim_{k\to+\infty} s_{k}={\bar s}\in \partial h({\bar p})$.   On the other hand, due to the point $p_{k+1}$ being a solution of~\eqref{eq:B-DCAS}, it satisfies \eqref{eq:sol}, which  considering  \eqref{eq:defFIWDef2b}  implies that 
\begin{equation}\label{eq:aplbc}
g(p) \geq g(p_{k+1}) -  \|s_k\| B_{p_k, s_k}(p) - \|s_k\| d(p_{k},  p_{k+1}) \qquad \forall p \in {\mathbb M}.
\end{equation}
Hence, by taking the inferior limit in~\eqref{eq:aplbc}, as $k$ goes to $+\infty$, and using the facts that $\lim_{k \to +\infty} p_k = \bar{p}$, $\lim_{k \to +\infty} s_k = \bar{s}$, $\lim_{k \to +\infty} d(p_k, p_{k+1}) = 0$, the function $g$ is lower semicontinuous, and invoking Lemma~\ref{le:cbf}, we conclude that
$$
g(p) \geq g(\bar{p}) - \|\bar{s}\|\, B_{\bar{p}, \bar{s}}(p), \qquad \forall p \in \mathbb{M}.
$$
Hence, it follows from  Theorem~\ref{th:ebesub}  that  $\bar{s} \in \partial g(\bar{p})$. Therefore, given that we already know ${\bar s} \in \partial  h({\bar p})$, it follows that ${\bar s} \in \partial  g({\bar p}) \cap \partial  h({\bar p})$. This confirms that ${\bar p}$ is a critical point of problem~\eqref{Pr:DCproblem}, thus completing the proof.
\end{proof}
In light of Proposition~\ref{pr:WellDefB-DCA}, we can see that the quantity  $d(p_{k},p_{k+1})$  can be interpreted as a measure of the criticality of point $p_k$. The following proposition provides a bound on the iteration complexity for this measure.
\begin{proposition}
    Let $(p_{k})_{k\in \mathbb N}$  be generated
    by Algorithm~\ref{Alg:B-DCAC}. 
    Then, for all $N\in \mathbb{N},$ there holds 
    $$
    \min_{k=0,1,\ldots,N}   d(p_{k},p_{k+1}) \leq \Big( \frac{ 2(\phi(p_{0})-{\phi}_{\inf}}{\sigma(N+1)} \Big)^{\frac{1}{2}}.
    $$
\end{proposition}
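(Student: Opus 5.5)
The plan is to derive the bound by telescoping the descent inequality \eqref{eq:dsc} of Proposition~\ref{pr:ffrdp}. That inequality gives
\[
\tfrac{\sigma}{2}\,d^{2}(p_{k},p_{k+1})\le \phi(p_{k})-\phi(p_{k+1})
\]
for every $k\in\mathbb{N}$; this is exactly \eqref{eq:CmIneq1}.

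First, sum this inequality over $k=0,1,\ldots,N$. The right-hand side telescopes to $\phi(p_0)-\phi(p_{N+1})$. By assumption~\ref{it:A2}, $\phi(p_{N+1})\ge \phi_{\inf}$, so
\[
\frac{\sigma}{2}\sum_{k=0}^{N} d^{2}(p_{k},p_{k+1})\le \phi(p_0)-\phi_{\inf}.
\]
The right-hand side is finite: $p_0\in{\rm dom}\,g={\rm dom}\,\phi$, so $\phi(p_0)<+\infty$, and $\phi_{\inf}>-\infty$.

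Next, bound the sum from below by $(N+1)$ times its smallest term:
\[
(N+1)\min_{k=0,\ldots,N} d^{2}(p_k,p_{k+1})\le \sum_{k=0}^{N} d^{2}(p_{k},p_{k+1}).
\]
Combining the two displays gives
\[
\min_{k} d^{2}(p_k,p_{k+1})\le \frac{2(\phi(p_0)-\phi_{\inf})}{\sigma(N+1)}.
\]
Since $\min_k d(p_k,p_{k+1})=\bigl(\min_k d^2(p_k,p_{k+1})\bigr)^{1/2}$ and the square root is monotone, taking square roots yields the claimed bound.

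There is no real obstacle here. The only points needing care are that $\sigma>0$ (from \ref{it:H1}) so dividing by it is allowed, and that all the quantities $\phi(p_k)$ are finite, which holds because every iterate lies in ${\rm dom}\,g$ by Proposition~\ref{pr:WellDefB-DCA}.
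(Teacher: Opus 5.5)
Your proposal is correct and follows the paper's proof: both sum the descent inequality \eqref{eq:dsc} over $k=0,\ldots,N$, bound the sum from below by $(N+1)$ times the smallest squared step, use \ref{it:A2} to replace $\phi(p_{N+1})$ by $\phi_{\inf}$, and then take square roots. Your added checks that $\sigma>0$ and that every $\phi(p_k)$ is finite are correct, though the paper leaves them implicit.
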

\begin{proof}
 From \eqref{eq:dsc}, we have $d^{2}(p_{k},p_{k+1})\leq (2/\sigma) \left( \phi(p_{k})-\phi(p_{k+1}) \right)$,
    for all $k\in \mathbb{N}$. Consequently,
    \begin{equation*}
            (N+1)\min _{k=0,1,\ldots,N}\bigl( d^{2}(p_{k},p_{k+1}) \bigr) \leq  \sum_{k=0}^{N}\frac{2}{\sigma} \bigl( \phi(p_{k})-\phi(p_{k+1}) \bigr) \leq \frac{2}{\sigma}\bigl(\phi(p_{0})-{\phi}_{\inf}\bigr),
    \end{equation*}
    for all $N\in \mathbb{N}$,  where   $-{\phi}_{\inf}<+\infty$ is determined by \ref{it:A2}. Hence, the desired inequality follows.
\end{proof}

\section{Numerical experiments} \label{sec:Numerics}
In this section we investigate the numerical performance of Algorithms~\ref{Alg:DCAC} (CR--DCA) and \ref{Alg:B-DCAC} (B-DCA) on a collection of DC optimization problems posed on the $\kappa$-hyperbolic space $\mathbb H_{\kappa}^{n}$ and on the manifold of symmetric positive definite matrices $\mathcal P(n)$.  In all experiments, the function $g$ is differentiable, and $h$ is differentiable; 
the solver uses closed-form expressions for the Riemannian gradients of the subproblem objectives \eqref{eq:DCAS} and \eqref{eq:B-DCAS}. The empirical comparisons should be read in light of the analytical guarantees established for B-DCA in this work, in particular the geodesic convexity of the inner subproblem.

Algorithms \ref{Alg:DCAC} and \ref{Alg:B-DCAC} were implemented in Matlab R2019a,  and numerical experiments carried out in a Intel Core i5 1.8Ghz, 8 GB Ram, running MAC OS X 10.13.6.   We have used Manopt Version~8~\cite{manopt}, and the subproblems \eqref{eq:DCAS} and \eqref{eq:B-DCAS} were solved by the \texttt{trustregions} solver with its default options; among the solvers available in Manopt, this choice consistently exhibited the most stable behavior for these subproblems. The objectives were scaled by a factor $\gamma = 1/(\| \text{grad} \phi(p_0) \| + 1)$  where $p_0 \in \mathbb{M}$ denotes the starting point. As stopping criteria we used $\| \text{grad} \phi(p_k) \| \leq \varepsilon$ or $d(p_{k+1},p_k) \leq \varepsilon$, with $\varepsilon = \gamma \times 10^{-4}$. Both algorithms are run with the same  starting point $p_{0}$, scaling $\gamma$, stopping thresholds, and Manopt \texttt{trustregions} options. 
For each instance we report the number of outer iterations $k$, the number of inner iterations \texttt{inn} for solving the subproblems, the ratio \texttt{inn}/$k$, the final function value \texttt{fval}, the final scaled norm of the Riemannian gradient \texttt{grad}, and, when available, the running time (\texttt{time}) in seconds. The pair (\texttt{fval}, \texttt{grad}) certifies solution quality; $(k,\texttt{inn},\texttt{inn}/k)$ measures solver workload; \texttt{time} summarizes the net computational cost. Differences in $k$ should be interpreted together with \texttt{inn}/$k$. As will be seen throughout this section, across all reported tests CR--DCA and B--DCA attain comparable solution quality, as certified by the terminal objective values \texttt{fval} and gradient norms \texttt{grad}. The workload measures $k$, \texttt{inn}, and \texttt{inn}/$k$ are of the same order for both methods within each problem family; when differences occur, they are mainly in the outer count $k$, while \texttt{inn}/$k$ remains similar. When total runtime \texttt{time} is reported, it aligns with these workload indicators and shows no systematic dominance. Overall, the tables indicate that B–DCA is consistently competitive, matching the accuracy of CR–DCA with comparable solver effort, which underscores its relevance within the proposed DC framework.

The codes and data for the numerical experiments are available at \url{http://mtm.ufsc.br/~douglas/downloads/BusemannDCA/}.

\subsection{Test problems in hyperbolic space}\label{sec:hyperbolic}
Here we consider a Rosenbrock-type function in the  $\kappa$-hyperbolic space form as the DC test problem. 
We recall that basic results on $\mathbb{H}_{\kappa}^n$ were presented in Section~\ref{sec:int.1}, and properties of the Busemann functions in this Hadamard manifold were discussed in Example~\ref{ex:BusHype}.
Before stating the test problems, we also show a useful result.

Consider the function the function $p \mapsto d_{\kappa}(p, q):=  (1/ \sqrt{\kappa})\operatorname{arcosh} (-\kappa \langle p,q\rangle )$. Thus, using \eqref{eq:grad}, we have 
\begin{equation} \label{eq:ddh}
{\rm grad}  d_{\kappa}^2(p,q)=-2\,\frac{\sqrt{\kappa}d_{\kappa}(p, q)}{\sqrt{\kappa^2\langle p,q\rangle^2-1}}(q+\kappa \langle p, q\rangle p), \qquad p\neq  q.
\end{equation}
\subsubsection{DC problem with a $\theta$-order hyperbolic Rosenbrock-type objective} \label{sec:hyper_rosen_theta}
The Rosenbrock function is a classical benchmark in nonlinear optimization, known for its nonconvex structure and narrow, curved valley that make it particularly challenging for iterative algorithms. Motivated by its role in the Euclidean setting, we introduce a family of intrinsic analogues defined on hyperbolic space, which preserve the essential geometric and analytical features of the original function. These hyperbolic Rosenbrock-type functions are formulated as DC  functions, making them well suited for evaluating the performance of DC optimization methods in negatively curved geometries. We provide  closed-form DC decompositions, establish the geodesic convexity of the components, and explicitly characterize the global minimizers. These constructions provide a principled extension of classical benchmarks to non-Euclidean settings, with applications to evaluating optimization algorithms on manifolds. With this in mind, we now present a detailed discussion. For clarity, our analysis focuses on the hyperbolic space ${\mathbb H}^n_1 =: \mathbb{H}^n$.

Let $\theta \ge 1$ be a fixed real exponent. Extending the classical Rosenbrock function  we introduce the $\theta$‑order $(a,b,\theta)$-family of hyperbolic Rosenbrock-type function defined on $\mathbb{H}^n$ by
\begin{equation}\label{eq:defrf}
f(p) := \bigl(a - d(p, \bar{p})^{\theta} \bigr)^2 + b \bigl(d(p, \bar{q})^{\theta} - d(p, \bar{p})^{2\theta} \bigr)^2, \qquad p \in \mathbb{H}^n,
\end{equation}
where $a, b > 0$ with $b \gg a \ge 1$, and $\bar{p}, \bar{q} \in \mathbb{H}^n$ are two fixed points satisfying the following conditions 
\begin{equation}\label{eq:radii-theta}
a^{\frac{2}{\theta}} - a^{\frac{1}{\theta}} \le d(\bar{p}, \bar{q}) \le a^{\frac{2}{\theta}} + a^{\frac{1}{\theta}}.
\end{equation}
Observe that $f(p)=g(p)-h(p)$ for all $p\in\mathbb M$, where the components $g$ and $h$ are given by
$$
g(p):=\ a^{2}\ +\ d(p,\bar p)^{2\theta}\ +\ 2b\,d(p,\bar q)^{2\theta}\ +\ 2b\,d(p,\bar p)^{4\theta},\qquad 
h(p):=\ 2a\,d(p,\bar p)^{\theta}\ +\ b\bigl(d(p,\bar p)^{2\theta}+d(p,\bar q)^{\theta}\bigr)^{2}.
$$
By \cite[Thm.~2.1, p.~111]{Udriste1994} we know that for any $z\in\mathbb H^{n}$ and $\alpha\geq 1$ the map $p\mapsto d(p,z)^{\alpha}$ is  geodesically convex. Hence the distance–power terms $d(p,\bar p)^{\theta}$, $d(p,\bar p)^{2\theta}$, $d(p,\bar p)^{4\theta}$, $d(p,\bar q)^{\theta}$, and $d(p,\bar q)^{2\theta}$ are geodesically convex. Moreover, since $t\mapsto t^{2}$ is convex and nondecreasing on $[0,\infty)$, the square of any nonnegative convex function is convex; in particular, $\big(d(p,\bar p)^{2\theta}+d(p,\bar q)^{\theta}\big)^{2}$ is  geodesically convex. By closure of geodesic convexity under nonnegative scaling and addition, both $g$ and $h$ are geodesically convex, and therefore $f=g-h$ is a DC function. 
Since $f(p)\ge 0$ for all $p\in\mathbb H^{n}$ and, by \eqref{eq:defrf}, $f$ in \eqref{eq:defrf} is a sum of squares, any point $p^{*}\in \mathbb{H}^n$ satisfying $d(p^{*},\bar p)^{\theta}=a$ and $d(p^{*},\bar q)^{\theta}=d(p^{*},\bar p)^{2\theta}=a^{2}$ is a global minimizer and satisfies $f(p^{*})=0$. Equivalently, any global minimizer $p^{*}$ must satisfy
$$
d(p^{*},\bar p)=a^{1/\theta},\qquad d(p^{*},\bar q)=a^{2/\theta}.
$$

Conversely, any $p$ satisfying these equalities attains $f(p)=0$ and is a global minimizer. Now, we define the hyperbolic sphere  centered at $\bar{p}$ and $\bar{q}$ as follows 
$$
S(\bar{p}, a^{1/\theta}):= \{ p \in \mathbb{H}^n : d(p, \bar{p}) = a^{1/\theta} \}, \qquad \qquad 
S(\bar{q}, a^{2/\theta}) := \{ p \in \mathbb{H}^n : d(p, \bar{q}) = a^{2/\theta} \}.
$$
Hence, every point $p^*$ in the intersection
$
S(\bar{p}, a^{1/\theta}) \cap S(\bar{q}, a^{2/\theta})
$
is a global minimizer of $f$, with $f(p^*) = 0$. Condition~\eqref{eq:radii-theta} guarantees that this intersection is nonempty. Furthermore, we have
\begin{itemize}
\item If $a^{2/\theta} - a^{1/\theta} < d(\bar{p}, \bar{q}) < a^{2/\theta} + a^{1/\theta}$, then the intersection contains  infinitely many points. Note that if $n=2$ then  just  two points.
\item If $d(\bar{p}, \bar{q}) = a^{2/\theta} - a^{1/\theta}$ (internal tangency) or $d(\bar{p}, \bar{q}) = a^{2/\theta} + a^{1/\theta}$ (external tangency), then the intersection consists of a single point.
\end{itemize}
In fact, \emph{for internal tangency}, choose 
\begin{equation}\label{eq:internal}
\bar{p}_{ _{\mathrm{int}} } = (\sinh a^{1/\theta}, 0, \cosh a^{1/\theta}), \qquad 
\bar{q}_{\mathrm{int}} = (\sinh a^{2/\theta}, 0, \cosh  a^{2/\theta}),
\end{equation}
Using the hyperbolic identity
$
\sinh u \sinh v- \cosh u \cosh v  = -\cosh(u - v),
$
we  obtain that 
$$
\langle \bar{p}_{\mathrm{int}} , \bar{q}_{\mathrm{int}}  \rangle = -\cosh(a^{2/\theta} - a^{1/\theta}),
\quad \text{so} \quad
d(\bar{p}_{\mathrm{int}} , \bar{q}_{\mathrm{int}} ) = \operatorname{arcosh}(\cosh(a^{2/\theta} - a^{1/\theta})) = a^{2/\theta} - a^{1/\theta}.
$$
Thus,
$
S(\bar{p}_{\mathrm{int}} , a^{1/\theta}) \cap S(\bar{q}_{\mathrm{int}} , a^{2/\theta})= \{p^*\}.
$
\emph{For external tangency}, define  $\bar{p}_{\mathrm{ext}}$  and $\bar{q}_{\mathrm{ext}}$ to a new point
\begin{equation}\label{eq:external}
 \bar{p}_{\mathrm{ext}} = (\sinh a^{1/\theta}, 0, \cosh a^{1/\theta}), \qquad \bar{q}_{\mathrm{ext}} = (-\sinh a^{2/\theta}, 0, \cosh  a^{2/\theta}).
\end{equation}
Then,  using the hyperbolic identity
$
 \sinh u \sinh v + \cosh u \cosh v= \cosh(u + v),
$
we conclude that the inner product is
$$
\langle \bar{p}_{\mathrm{ext}}, \bar{q}_{\mathrm{ext}} \rangle = -\sinh a^{1/\theta}  \sinh a^{2/\theta} - \cosh a^{1/\theta}  \cosh a^{2/\theta} = -\cosh(a^{1/\theta} + a^{2/\theta}),
$$
and the hyperbolic distance becomes
$$
d(\bar{p}_{\mathrm{ext}}, \bar{q}_{\mathrm{ext}}) = \operatorname{arcosh}\left( -\langle \bar{p}_{\mathrm{ext}}, \bar{q}_{\mathrm{ext}} \rangle \right) 
= \operatorname{arcosh}\big( \cosh(a^{1/\theta} + a^{2/\theta}) \big) = a^{1/\theta} + a^{2/\theta}.
$$
Thus,  the intersection $S(\bar{p}_{\mathrm{int}} , a^{1/\theta}) \cap S(\bar{q}_{\mathrm{int}} , a^{2/\theta})$ reduces again to a single point, i.e., 
$$
S(\bar{p}_{\mathrm{ext}}, a^{1/\theta}) \cap S(\bar{q}_{\mathrm{ext}}, a^{2/\theta}) = \{p^*\}.
$$
Note that for $\theta > 1$, the function $h$ is continuously differentiable on $\mathbb{H}^n$, whereas for $\theta = 1$, it fails to be differentiable at the points $p = \bar{p}_{\mathrm{ext}}$ and $p = \bar{q}_{\mathrm{ext}}$.
 
\paragraph{Gradient of $h$:}
For $p \ne \bar{p}$ and  $p \ne\bar{q}$,  using the chain rule and \eqref{eq:ddh}, the gradient of $h$ is given by 
\begin{multline}
{\rm grad} h(p) =
-2a \theta\, d(p, \bar{p})^{\theta - 1}\,
\frac{ \langle p, \bar{p} \rangle\, p + \bar{p} }{ \sqrt{ \langle p, \bar{p} \rangle^2 - 1} }
- 4b \theta\, \big( d(p,\bar{p})^{2\theta} + d(p,\bar{q})^{\theta} \big)\,
d(p, \bar{p})^{2\theta - 1}\,
\frac{ \langle p, \bar{p} \rangle\, p + \bar{p} }{ \sqrt{ \langle p, \bar{p} \rangle^2 - 1} }
\\
- 2b \theta\, \big( d(p,\bar{p})^{2\theta} + d(p,\bar{q})^{\theta} \big)\,
d(p, \bar{q})^{\theta - 1}\,
\frac{ \langle p, \bar{q} \rangle\, p + \bar{q} }{ \sqrt{ \langle p, \bar{q} \rangle^2 - 1} }.
\end{multline}

\paragraph{Objective functions for subproblems   ~\eqref{eq:DCAS}  and ~\eqref{eq:B-DCAS}:}
Let $s_k := {\rm grad} h(p_k)$ for $p_k \ne \bar{p}, \bar{q}$. Then, 
\begin{itemize}
\item The objective function for the classical subproblem~\eqref{eq:DCAS} becomes
$$
\psi_k(p)
= a^2 + d(p,\bar{p})^{2\theta} + 2b\,d(p,\bar{q})^{2\theta} + 2b\,d(p,\bar{p})^{4\theta} - \frac{\operatorname{arcosh}(-\langle p_k,p\rangle)}{\sqrt{\langle p_k,p\rangle^2-1}} \langle s_k, p\rangle .
$$
\item The objective function for the Busemann subproblem~\eqref{eq:B-DCAS} becomes
$$
\phi_k(p) = a^2 + d(p,\bar{p})^{2\theta} + 2b\,d(p,\bar{q})^{2\theta}+ 2b\,d(p,\bar{p})^{4\theta} + \|s_k\|\,\ln\!\Big(-\Big\langle p,\; p_k+\frac{s_k}{\|s_k\|}\Big\rangle\Big).
$$
\end{itemize}
In both cases, the smooth convex component $g$ appears explicitly, and the second  component $h$ is ``linearized" either via the exponential map or the Busemann approximation. Here, in test problems with the Rosenbrock-type function we used $\theta = 1$ and $n=2$.   Then, we consider two cases for choosing $\bar{p}, \bar{q} \in \mathbb{H}^2$: internal tangency and external tangency.

For the internal tangency case, we set $a=1$ and $b=100$ and take $\bar{p},\bar{q}$ as in \eqref{eq:internal}. We repeated the test from five random starting points in $\mathbb{H}^2$. 
From Table~\ref{tab:rosen-1}, both CR--DCA and B--DCA display essentially identical behavior: their outer/inner iteration counts coincide up to small fluctuations, and the final objective values and gradient norms agree to numerical accuracy. This parity indicates that the Busemann modeling introduces no observable penalty in this regime, B-DCA is competitive in both accuracy and per-outer effort. While this problem class does not separate the methods, B--DCA retains structural advantages  that can yield steadier progress on more demanding instances.

\begin{table}[H]
\setlength{\tabcolsep}{4.5pt}    
\renewcommand{\arraystretch}{0.9} 
\centering 
\begin{tabular}{lcrrrrrcrrrrr}
\hline 
\ & & \multicolumn{5}{c}{CR-DCA} & & \multicolumn{5}{c}{B-DCA} \\ 
\# & & $k$ & inn & inn$/k$ & fval & grad  & & $k$ inn & inn & inn$/k$ & fval & grad \\ \hline 
1 &  & 158 & 223 & 1.41 & 1.4E-12 & 9.8E-07 &  & 158 & 223 & 1.41 & 1.4E-12 & 9.8E-07 \\
2 &  & 25 & 55 & 2.20 & 8.6E-02 & 2.8E-04 &  & 25 & 55 & 2.20 & 8.6E-02 & 2.8E-04 \\
3 &  & 153 & 217 & 1.42 & 2.2E-12 & 9.4E-07 &  & 153 & 217 & 1.42 & 2.2E-12 & 9.4E-07 \\
4 &  & 148 & 211 & 1.43 & 3.6E-12 & 9.3E-07 &  & 148 & 211 & 1.43 & 3.6E-12 & 9.3E-07 \\
5 &  & 113 & 175 & 1.55 & 1.6E-09 & 1.2E-04 &  & 113 & 175 & 1.55 & 1.6E-09 & 1.2E-04 \\ \hline 
\end{tabular}\caption{Results for Rosenbrock-type objective: internal tangency case.}\label{tab:rosen-1}
\end{table}

For the external tangency case, we set $a=1$ and $b=2$ and take $\bar{p},\bar{q}$ as in \eqref{eq:external}. Using five random starting points in $\mathbb{H}^2$, Table~\ref{tab:rosen-2} shows that B--DCA uses more outer iterations than CR--DCA, which translates into a higher total inner effort on average (mean $k$: $7.42\times 10^3$ vs.\ $6.25\times 10^3$; mean \texttt{inn}: $7.67\times 10^3$ vs.\ $6.43\times 10^3$). Even so, the per-outer cost \texttt{inn}/$k$ is essentially the same in four of the five runs (all between $1.00$ and $1.02$ for both methods), with B--DCA slightly lower in runs~1 and~4; only run~5 is unfavorable to B--DCA ($1.81$ vs.\ $1.49$), which raises its average (mean \texttt{inn}/$k$: $1.17$ vs.\ $1.11$). The final objective values and gradient norms coincide to numerical precision in runs~3--4 and remain very close elsewhere, indicating indistinguishable solution quality. Overall, this table shows that B--DCA is competitive in accuracy and per-outer effort, while CR--DCA is modestly cheaper in total iteration counts on this test.

\begin{table}[H]
\setlength{\tabcolsep}{4.5pt}    
\renewcommand{\arraystretch}{0.9} 
\centering 
\begin{tabular}{lcrrrrrcrrrrr}
\hline 
\ & & \multicolumn{5}{c}{CR-DCA} & & \multicolumn{5}{c}{B-DCA} \\ 
\# & & $k$ & inn &inn$/k$ & fval & grad & &  $k$ & inn &inn$/k$ & fval & grad \\ \hline 
1 &  & 7853 & 8025 & 1.02 & 6.4E-08 & 4.6E-06 &  & 8454 & 8555 & 1.01 & 9.0E-08 & 5.9E-06 \\
2 &  & 4174 & 4238 & 1.02 & 5.3E-07 & 2.8E-05 &  & 4560 & 4659 & 1.02 & 7.4E-07 & 3.6E-05 \\
3 &  & 8989 & 9012 & 1.00 & 2.5E-08 & 2.0E-06 &  & 11300 & 11317 & 1.00 & 2.5E-08 & 2.0E-06 \\
4 &  & 9053 & 9125 & 1.01 & 2.1E-08 & 1.6E-06 &  & 11464 & 11486 & 1.00 & 2.1E-08 & 1.6E-06 \\
5 &  & 1193 & 1773 & 1.49 & 6.5E-05 & 1.7E-03 &  & 1299 & 2347 & 1.81 & 9.1E-05 & 2.2E-03 \\ \hline 
\end{tabular}\caption{Results for Rosenbrock-type objective: external tangency case.}\label{tab:rosen-2}
\end{table}

To close this subsection, we note that across the $\theta$-order hyperbolic Rosenbrock tests (both internal and external tangency, five random starts each), CR--DCA and B--DCA attain essentially the same solution quality, as reflected by the final scaled objective values and gradient norms. B--DCA tends to use more outer iterations on some instances, which can raise the total inner effort, yet its per-outer cost (\texttt{inn}/$k$) remains comparable and is occasionally smaller, consistent with the geodesically convex inner models (with unique minimizers) that it solves at each step. In scenarios where the inner solve dominates the computational cost, this structural feature can be beneficial. Overall, on this class of problems B--DCA is competitive and reliable in accuracy and per-outer effort, while CR--DCA is modestly cheaper in total iteration counts on certain runs.

\subsection{Test problems in positive definite matrices space}
We now turn to the manifold of symmetric positive definite matrices $\mathcal{P}(n)$. Preliminaries on $\mathcal{P}(n)$ and explicit expressions for the Busemann functions and its Riemannian gradient are recalled in Section~\ref{sec:int.2} and Example~\ref{ex:BusSDP}. We consider two set of  problems on $\mathcal{P}(n)$: the first is a synthetic, academically oriented example designed to isolate geometric effects; the second is a practically motivated instance with direct application appeal. The experimental setup follows the same protocol adopted earlier.

\subsubsection{An academic example}
This academic test, also examined in the numerical experiments of \cite[Sec.~7.1]{Bergmann2024}, is designed to validate the implementations of CR–DCA and B-DCA in a controlled setting with known global minimizers and to compare their behavior under the affine–invariant geometry of $\mathcal{P}(n)$. Because the objective depends only on $\log\det X$, the geometric effects appear solely through the subproblems via $\log_{X_k}$ and $d(\cdot,\cdot)$, yielding a clean benchmark. As the results in Table~\ref{tab:power-dca} indicate, both methods reach the global minimum with matching iteration counts, and B-DCA typically achieves lower runtime as $n$ increases, though the trend is not strictly monotonic. It is worth emphasizing that, in this setup, both subproblems are convex; the convexity of the classical subproblem is established in \cite[Example~6.1(i)]{Bergmann2024}. Thus the observed advantage of B-DCA is due to computational effects, its Busemann model yields a well-conditioned tangent-space solve, reducing costly evaluations of $\log_{X_k}(\cdot)$ and $d(\cdot,\cdot)$ and enabling reuse of eigendecompositions across iterations, rather than to any convexity gap.

We first recall some  notations. Throughout, $\ln$ denotes the scalar natural logarithm and ${e}$ the scalar exponential. We consider the difference-of-convex (DC) objective $f=g-h$ on $\mathcal P(n)$ with
$$
g(X)=\bigl(\ln\det X\bigr)^{4}, \qquad h(X)=\bigl(\ln\det X\bigr)^{2}.
$$
 The global minimum value of $f$  is $f^\star=-\tfrac{1}{4}$, attained whenever ${\rm ln} \det X=\pm 1/\sqrt{2}$ (e.g., at $X^\star=e^{\pm 1/(\sqrt{2}\,n)}I_n$).  For the initialization we take
$$
X_0={\rm ln}(n)\,I_n + e_1 e_n^\top + e_n e_1^\top,
$$
where $e_1:=(1, 0 \dots)^T\in {\mathbb R }^{n\times 1}$ and $e_n:=(0,  \dots, 0,1)^T\in {\mathbb R }^{n\times 1}$ , 
and consider dimensions $n\in\{4,10,20,50,100\}$. The stopping criteria and all shared hyperparameters follow our general experimental protocol.  
\paragraph{Gradient of $h$:}
On $\mathcal{P}(n)$  the Riemannian gradient of $h$ and its norm  are  given by 
$$
{\rm grad} h(X) = 2{\rm ln} \!\det(X)\,X,
\qquad 
\|{\rm grad} h(X)\|= 2\sqrt{n}| {\rm ln} \!\det(X)|.
$$
\paragraph{Objective functions for subproblems ~\eqref{eq:DCAS} and ~\eqref{eq:B-DCAS}:}
Let $X_k$ be the current iterate. The objective function of the classical subproblem \eqref{eq:DCAS} reads
$$
\psi_k(X)=\bigl({\rm ln} \det X\bigr)^4-2{\rm ln} \det(X_k){\rm ln} \frac{\det X}{\det X_k}, 
$$
and  the objective function in subproblem \eqref{eq:B-DCAS} is given by  
$$
\phi_k(X):=\bigl({\rm ln} \det X\bigr)^4 + \|{\rm grad} h(X_k)\| B_{X_k, {\rm grad} h(X_k)}(X),
$$
where we use the  explicit  formula \eqref{theo:bus.I.eq} in  Example~\ref{ex:BusSDP}  to compute the Busemann functions $B_{X_k, {\rm grad} h(X_k)}$.

\begin{table}[H]
\setlength{\tabcolsep}{4.5pt}    
\renewcommand{\arraystretch}{0.9} 
\centering
\begin{tabular}{rrrrrrrrrrrrr}
\hline 
 & & \multicolumn{5}{c}{CR-DCA} & & \multicolumn{5}{c}{B-DCA} \\
$n$ & & $k$ & inn & time & fval & grad & &  $k$ & inn & time & fval & grad \\ \hline 
4 &  & 13 & 25 & 0.49 & -0.25 & 7.52E-07 &  & 13 & 25 & 0.43 & -0.25 & 7.52E-07 \\
10 &  & 16 & 43 & 0.53 & -0.25 & 5.09E-07 &  & 16 & 43 & 0.34 & -0.25 & 5.09E-07 \\
20 &  & 16 & 49 & 0.11 & -0.25 & 1.01E-06 &  & 16 & 49 & 0.08 & -0.25 & 1.01E-06 \\
50 &  & 16 & 55 & 0.26 & -0.25 & 2.13E-06 &  & 16 & 55 & 0.14 & -0.25 & 2.13E-06 \\
100 &  & 16 & 59 & 1.03 & -0.25 & 3.55E-06 &  & 16 & 59 & 0.46 & -0.25 & 3.55E-06 \\ \hline 
\end{tabular}
\caption{Results in the academic problem for CR-DCA and B-DCA.}\label{tab:power-dca}
\end{table}
Table~\ref{tab:power-dca} reports the outcomes for CR–DCA and B–DCA. We observe that both algorithms display identical outer/inner iteration counts across all tested dimensions and attain the same objective value $f^\star=-0.25$ to the reported precision, with final gradient norms $\le 3.6\times 10^{-6}$. As $n$ increases from $4$ to $100$, the number of outer iterations stabilizes at $k=16$, while the inner iterations grow moderately (from $25$ to $59$). In terms of runtime, B–DCA is consistently faster, with approximate reductions of $12\%$, $36\%$, $27\%$, $46\%$, and $55\%$ for $n=4,10,20,50,100$, respectively. The advantage tends to increase with dimension (though not strictly monotonically), indicating a lower per–iteration overhead in the Busemann-subgradient subproblem on $\mathcal{P}(n)$.

\subsubsection{Contrastive learning via DC optimization in $\mathcal{P}(n)$} \label{sec:contrastive-SPD}

In this section we consider a contrastive DC optimization model on the manifold of symmetric positive definite matrices $\mathcal{P}(n)$. Given disjoint sets of reference points $\mathfrak{P}=\{\bar X_1,\ldots,\bar X_m\}\subset\mathcal{P}(n)$ and $\mathfrak{N}=\{\bar Y_1,\ldots,\bar Y_r\}\subset\mathcal{P}(n)$, the goal is to select $X\in\mathcal{P}(n)$ that is close to $\mathfrak{P}$ and far from $\mathfrak{N}$, where proximity is measured by the affine–invariant geodesic distance $d(\cdot,\cdot)$ defined in~\eqref{eq:RiemannianDistance}. The \emph{SPD contrastive objective} is
\begin{equation}\label{eq:contrastive-spd}
f(X) := \sum_{i=1}^{m}\lambda_i^{+}\, d^2\!\big(X,\bar X_i\big)\ -\ \sum_{j=1}^{r}\lambda_j^{-}\, d^2\!\big(X,\bar Y_j\big),
\qquad X\in\mathcal{P}(n),
\end{equation}
with fixed weights $\lambda_i^+,\lambda_j^->0$. Since $X\mapsto d^2(X,\bar Z)$ is geodesically strongly convex on $\mathcal{P}(n)$ for each fixed $\bar Z$, $f$ admits the DC splitting
\begin{equation}\label{eq:g-h-contrastive-spd}
g(X):=\sum_{i=1}^{m}\lambda_i^{+}\, d^2\!\big(X,\bar X_i\big),\qquad \qquad 
h(X):=\sum_{j=1}^{r}\lambda_j^{-}\, d^2\!\big(X,\bar Y_j\big),
\end{equation}
so that $f=g-h$. This objective encodes the contrastive principle: $g$ promotes proximity to positives, while $h$ penalizes proximity to negatives. Such formulations are natural when data are represented by covariance or kernel matrices, e.g., in signal processing, computer vision, and Riemannian manifold learning; see \cite{bao2024symcl,Tibermacine2024}.

We use this example to contrast the inner models underlying CR–DCA and B–DCA on $\mathcal{P}(n)$. The first–order model of $h$ used by CR–DCA does not, in general, preserve geodesic convexity under the affine–invariant metric. In B–DCA, $h$ is replaced by a Busemann-based surrogate; under our convention that the Busemann functions is concave (hence $-B$ is convex), the inner objective is geodesically convex. Accordingly, the reported performance differences should be read through this convex B–DCA versus nonconvex CR–DCA contrast and its computational implications (conditioning and reuse of eigendecompositions).

Our objectives are twofold: (i) to verify that CR–DCA and B–DCA compute meaningful contrastive representatives $X^\star\in\mathcal{P}(n)$ with comparable solution quality; and (ii) to compare their numerical behavior under the affine–invariant geometry. We report final objective values, Riemannian gradient norms, iteration counts, and runtime, emphasizing the impact of the inner–subproblem model on efficiency. The experiments use $n=10$ with unit weights $\lambda_i^{+}=\lambda_j^{-}=1$ and follow the stopping criteria and shared hyperparameters of our general experimental protocol.

\paragraph{Objective functions for subproblems~\eqref{eq:DCAS} and~\eqref{eq:B-DCAS}.}
Let $ S_k := {\rm grad} h(X_k) $, where $ X_k \in \mathcal{P}(n) $ is the current iterate. Then, the subproblem objectives are:
\begin{itemize}
    \item For the classical DC subproblem~\eqref{eq:DCAS}, the objective becomes:
    \begin{equation}
    \Psi_k(X) := \sum_{i \in \mathfrak{P}} \lambda_i^+\, d(X, \bar{X}_i)^2 - \langle S_k, \log_{X_k} X \rangle,
    \end{equation}
    where the inner product is as in~\eqref{eq:AffineInvariantMetric} and $ \log_{X_k} X $ is given by~\eqref{eq:LogSPD}.
    \item For the Busemann-regularized DC subproblem~\eqref{eq:B-DCAS}, the objective becomes:
    \begin{equation}
    \Phi_k(X) := \sum_{i \in \mathfrak{P}} \lambda_i^+\, d(X, \bar{X}_i)^2 + \|S_k\|  B_{{P_k}, S_k}(X),
    \end{equation}
 where  we use the  explicit  formula \eqref{theo:bus.I.eq} in  Example~\ref{ex:BusSDP}  to compute the Busemann functions $B_{{P_k}, S_k}(X)$
\end{itemize}

\begin{table}[H]
\setlength{\tabcolsep}{4.5pt}    
\renewcommand{\arraystretch}{0.9} 
\centering 
\begin{tabular}{lcrrrrrcrrrrr}
\hline 
\ & & \multicolumn{5}{c}{CR-DCA} & & \multicolumn{5}{c}{B-DCA} \\ 
\# & & $k$ & inn & inn/$k$ & fval & grad & &  $k$ & inn & inn/$k$ & fval & grad \\ \hline 
1 &  & 7 & 13 & 1.86 & 0.1663 & 1.14E-05 &  & 8 & 16 & 2.00 & 0.1663 & 6.74E-06 \\
2 &  & 7 & 14 & 2.00 & 0.1981 & 1.08E-05 &  & 8 & 15 & 1.88 & 0.1981 & 4.88E-06 \\
3 &  & 7 & 14 & 2.00 & 0.1520 & 1.13E-05 &  & 8 & 15 & 1.88 & 0.1520 & 4.98E-06 \\
4 &  & 7 & 14 & 2.00 & 0.1825 & 1.12E-05 &  & 8 & 16 & 2.00 & 0.1825 & 6.62E-06 \\
5 &  & 7 & 13 & 1.86 & 0.2071 & 1.07E-05 &  & 8 & 15 & 1.88 & 0.2071 & 4.98E-06 \\
6 &  & 7 & 14 & 2.00 & 0.2252 & 1.06E-05 &  & 8 & 15 & 1.88 & 0.2252 & 4.95E-06 \\
7 &  & 7 & 13 & 1.86 & 0.1960 & 1.09E-05 &  & 8 & 15 & 1.88 & 0.1960 & 5.81E-06 \\
8 &  & 7 & 14 & 2.00 & 0.1762 & 1.13E-05 &  & 8 & 16 & 2.00 & 0.1762 & 6.10E-06 \\
9 &  & 7 & 14 & 2.00 & 0.1686 & 1.11E-05 &  & 8 & 15 & 1.88 & 0.1686 & 4.85E-06 \\
10 &  & 7 & 13 & 1.86 & 0.1918 & 1.10E-05 &  & 8 & 15 & 1.88 & 0.1918 & 5.78E-06 \\ \hline 
\end{tabular}\caption{Results for contrastive learning in $\mathcal{P}(5)$, with $m=5$ and $r=1$.}\label{tab:contra-spd-1}
\end{table}

Table~\ref{tab:contra-spd-1} summarizes ten runs with random initializations in $\mathcal{P}(5)$ with  $m=5$ and  $r=1$. Both methods attain identical objective values across all restarts, confirming comparable solution quality. A clear difference appears in stationarity: B–DCA yields strictly smaller final Riemannian gradient norms in every trial (mean $5.569\times 10^{-6}$) than CR–DCA (mean $1.103\times 10^{-5}$), i.e., an almost twofold reduction. This tighter stationarity can naturally entail a slightly larger overall inner effort: B–DCA uses one additional outer iteration ($k=8$ versus $k=7$) and, consequently, a higher average total of inner iterations (means $15.3$ versus $13.6$). At the same time, the inner burden per outer step is comparable, if anything, marginally lower for B–DCA, as reflected by the average inn/$k$ ratios ($1.9125$ for B–DCA versus $1.9429$ for CR–DCA) and medians (15 vs.\ 14). Overall, this experiment highlights B–DCA’s advantage of achieving stronger first–order stationarity while keeping the inner workload per outer step essentially on par, consistent with the convexity of its inner model.

\begin{table}[H]
\setlength{\tabcolsep}{4.5pt}    
\renewcommand{\arraystretch}{0.9} 
\centering 
\begin{tabular}{lcrrrrrcrrrrr}
\hline 
\ & & \multicolumn{5}{c}{CR-DCA} & & \multicolumn{5}{c}{B-DCA} \\ 
\# & & $k$ & inn & inn/$k$ & fval & grad & &  $k$ & inn & inn/$k$ & fval & grad \\ \hline 
1 &  & 51 & 91 & 1.78 & -0.6747 & 2.03E-05 &  & 83 & 129 & 1.55 & -0.6747 & 2.12E-05 \\
2 &  & 50 & 91 & 1.82 & -0.6587 & 2.14E-05 &  & 81 & 125 & 1.54 & -0.6587 & 1.99E-05 \\
3 &  & 49 & 87 & 1.78 & -0.7142 & 2.27E-05 &  & 80 & 124 & 1.55 & -0.7142 & 2.27E-05 \\
4 &  & 50 & 90 & 1.80 & -0.6460 & 1.82E-05 &  & 78 & 122 & 1.56 & -0.6460 & 2.06E-05 \\
5 &  & 53 & 96 & 1.81 & -0.6116 & 1.72E-05 &  & 85 & 136 & 1.60 & -0.6116 & 1.97E-05 \\
6 &  & 52 & 94 & 1.81 & -0.6051 & 1.73E-05 &  & 83 & 130 & 1.57 & -0.6051 & 1.85E-05 \\
7 &  & 49 & 85 & 1.73 & -0.6570 & 1.80E-05 &  & 71 & 112 & 1.58 & -0.6570 & 1.99E-05 \\
8 &  & 53 & 97 & 1.83 & -0.5740 & 1.79E-05 &  & 88 & 140 & 1.59 & -0.5740 & 1.87E-05 \\
9 &  & 53 & 96 & 1.81 & -0.5892 & 1.73E-05 &  & 87 & 139 & 1.60 & -0.5892 & 1.93E-05 \\
10 &  & 49 & 88 & 1.80 & -0.6763 & 2.21E-05 &  & 75 & 118 & 1.57 & -0.6763 & 2.13E-05 \\ \hline 
\end{tabular}\caption{Results for contrastive learning in $\mathcal{P}(5)$, with $m=5$ and $r=4$.}\label{tab:contra-spd-2}
\end{table}

Table~\ref{tab:contra-spd-2} summarizes ten runs with random initializations in $\mathcal{P}(5)$ with  $m=5$, $r=4$. Both methods deliver indistinguishable solution quality: objective values match entrywise, and the final Riemannian gradient norms are of the same order ($\approx 2\times 10^{-5}$) for both algorithms. The principal contrast lies in efficiency. B–DCA requires more outer iterations (mean $81.1$ vs.\ $50.9$ for CR–DCA), but its inner subproblems are consistently easier: the inner-per-outer ratio $\texttt{inn}/k$ is strictly smaller for B–DCA in all ten trials (average $1.572$ vs.\ $1.797$, about $12.5\%$ lower). 
The pattern is precisely what the modeling predicts: B–DCA’s convex Busemann-based inner model yields cheaper inner solves (and facilitates eigendecomposition reuse), so the extra outer iterations are offset by a reduced per-iteration burden.

To close this section, we synthesize the evidence across the three groups of tests:  (i) the academic benchmark on $\mathcal{P}(n)$, (ii) the contrastive learning tasks on $\mathcal{P}(n)$, and (iii) the $\theta$-order hyperbolic Rosenbrock problems on $\mathbb{H}^2$. Although the surrogate objectives in \eqref{eq:Sugdlhs} and \eqref{eq:Sugbf} are analytically distinct, they track each other closely along the observed iterates, which plausibly explains the near-identical solution quality achieved by CR--DCA and B--DCA in all experiments (final objective values and first-order stationarity). The main practical difference lies in the structure of the inner problems: the Busemann model \eqref{eq:B-DCAS} is geodesically convex, and this often translates into steadier progress and a comparable or lower per-outer effort (\texttt{inn}/$k$), even in cases where B--DCA uses more outer iterations.

On $\mathcal{P}(n)$, the academic test shows matching iteration counts and solution quality, with B--DCA frequently exhibiting favorable runtime as $n$ increases, consistent with concentrating the geometry in well-conditioned tangent-space solves (fewer expensive evaluations of $\log_{X_k}(\cdot)$ and $d(\cdot,\cdot)$ and better reuse of eigendecompositions). In the contrastive learning experiments, the methods again deliver essentially the same accuracy; B--DCA often requires more outer steps but displays lower \texttt{inn}/$k$ and more regular progress across restarts, an advantage that becomes relevant when inner solves dominate the computational budget. For the hyperbolic Rosenbrock family, the internal tangency tests show virtually identical behavior, and the external tangency tests are mixed: CR--DCA is sometimes cheaper in total counts, whereas the per-outer cost remains comparable and the attained solutions are indistinguishable in quality, so B--DCA remains competitive.

Taken together, these results present CR--DCA as a strong baseline and B--DCA as a competitive alternative that can be particularly attractive when (i) inner subproblem cost dominates runtime, (ii) numerical stability of the inner model matters, or (iii) one wishes to exploit  the geodesic convexity  in \eqref{eq:B-DCAS} for steadier progress. Beyond these experiments, the Busemann modeling and the side-by-side comparison of \eqref{eq:Sugdlhs} and \eqref{eq:Sugbf} introduce new ideas into DC optimization on manifolds and suggest avenues to refine convergence theory for DCA-type schemes. 

\section{Conclusions} \label{sec:Conclusions}

In summary, this paper investigates fundamental properties of Busemann functions on Hadamard manifolds and highlights their role in the design and analysis of optimization algorithms on Riemannian manifolds. This is achieved through the introduction of a novel Busemann-based characterization of the classical subdifferential in Riemannian optimization. The proposed approach addresses challenges arising from nonconvex subproblem functions on Hadamard manifolds, which commonly occur in classical difference-of-convex algorithms. By replacing the inner product with Busemann functions, the resulting reformulation guarantees the strong convexity of the subproblem functions, thereby improving both optimization performance and algorithmic efficiency in the Hadamard manifold setting.

We are confident that the characterization developed here for DC problems will also be useful in broader areas of continuous optimization. For instance, in an upcoming paper, we intend to employ it to develop algorithms utilizing the Bregman distance concept. To illustrate, let us explore how we can establish the Bregman distance concept using the Busemann functions, which forms the basis of our discussion: Let $\mathbb{M}$ be a Hadamard manifold and $S \subseteq \mathbb{M}$ be an open and convex set  and ${\bar S}$ its closure. Consider a {\it proper  convex real  function  $\psi: {\bar S} \rightarrow \mathbb{R} \cup \{+\infty\}$}, which is differentiable on the set $S$,  and let  $D_{\psi}(\cdot,\cdot): {\bar S}  \times {S}  \rightarrow \mathbb{R} \cup \{+\infty\}$ be a  function   associated to $\psi$ defined by
\begin{equation} \label{eq:deBFDef}
D_{\psi}(p, q) :=   \psi(p)-\psi(q)+ \|{\rm grad} \psi(q)\|B_{q, {\rm grad} \psi(q)}(p), 
\end{equation}
where $B_{q, {\rm grad} \psi(q)}$ is the Busemann functions  with a base point  ${q} \in  {\mathbb M}$  and  associated direction  ${ {\rm grad} \psi(q)} \in T_{q} {\mathbb M}$.  To introduce the subsequent definition, let us define the notation for the {\it partial level sets of $D_{\psi}$} as follows: for any  $\alpha \in \mathbb{R}$, consider
\begin{equation*}
\mathcal{L}_1(\alpha,{q}) := \{p \in {\bar S}:~D_{\psi}(p,q) \leq \alpha\},\qquad \quad  \mathcal{L}_2(p,\alpha) := \{q \in {S}:~ D_{\psi}(p, q) \leq \alpha\}.
\end{equation*}
The extension of the Bregman distance concept to Hadamard manifolds utilizing a Busemann functions is as follows:
\begin{definition} \label{def:bf} 
The function $\psi$ is called a {\it Bregman-Busemann  function} and $D_{\psi}$ a {\it Bregman-Busemann distance induced by $\psi$} if the following conditions hold:
\begin{enumerate}
    \item[$(i)$] $\psi$ is continuously differentiable on $S$;
    \item[$(ii)$] $\psi$ is strictly convex and  continuous on ${\bar S}$;
    \item[$(iii)$] For all $\alpha \in \mathbb{R}$ the partial level sets $\mathcal{L}_1(\alpha, q)$ and $\mathcal{L}_2(p, \alpha)$ are bounded for every $q \in S$ and $p \in {\bar S}$, respectively.
   \item[$(iv)$]  If $(q_k)_{k\in {\mathbb N}}\subset S$ converges to ${\bar q}$ then $D_{\psi}({\bar q},q_k)$ converges to $0$.
    \item[$(v)$]  If $(p_k)_{k\in {\mathbb N}}\subset {\bar S}$ and $(q_k)_{k\in {\mathbb N}}\subset S$  are sequences such that $(p_k)_{k\in {\mathbb N}}$ is bounded, $\lim q_k = {\bar q}$, and $\lim_{k \to \infty} D_{\psi}(p_k,q_k)= 0$, then $\lim p_k = {\bar q}$.
\end{enumerate}
\end{definition}
We proceed with some comments regarding Definition~\ref{def:bf}.  The set $S$ is referred to as the {\it zone} of $\psi$.  Utilizing \eqref{eq:defFIWDef2b}, we deduce that $(iv)$ and $(v)$ hold when ${\bar q} \in S$, as a consequence of (i), (ii), and (iii), thereby necessitating their verification solely at points on the boundary $\partial S$ of $S$. In the following proposition, we demonstrate that the Bregman-Busemann distance is indeed a convex distance in the first coordinate.
\begin{proposition} \label{le;ipbbf}
Let $\psi$ be a Bregman-Busemann  function with zone $S$ and $D_{\psi}$ be  the Bregman-Busemann distance induced by $\psi$. Then, the following statements hold:
\begin{enumerate}
     \item[(i)] $D_{\psi}(p, q)\geq 0$, for all $p \in {\bar S}$ and $q \in S$;
     \item[(ii)]   $D_{\psi}(\cdot, q): {\bar S}   \rightarrow \mathbb{R} \cup \{+\infty\}$ is strictly convex, for all $q\in {S}$;
\end{enumerate}
\end{proposition}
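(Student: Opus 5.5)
Both items follow from Theorem~\ref{th:ebesub} with $\sigma=0$ together with the convexity of Busemann functions in Lemma~\ref{le:CharactBusFunc}.

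\emph{Item (i).} Fix $q\in S$. Since $S$ is open and $\psi$ is convex and differentiable on $S$, the subdifferential is the singleton $\partial\psi(q)=\{{\rm grad}\,\psi(q)\}$. Indeed, by Proposition~\ref{pr:sfsd}, any $s\in\partial\psi(q)$ satisfies $\langle s,v\rangle\le \psi'(q,v)=\langle {\rm grad}\,\psi(q),v\rangle$ for every $v\in T_q\mathbb{M}$, which forces $s={\rm grad}\,\psi(q)$.

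Applying Theorem~\ref{th:ebesub} with $\sigma=0$ and $s={\rm grad}\,\psi(q)$ gives
\[
\psi(p)\ge \psi(q)-\|{\rm grad}\,\psi(q)\|\,B_{q,{\rm grad}\,\psi(q)}(p)\qquad\text{for all } p.
\]
Rearranged, this says $D_\psi(p,q)\ge 0$.

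There is one technical point: $\psi$ is only defined on $\bar S$. I would extend it by $+\infty$ outside $\bar S$. The extension is still convex, because $\bar S$ is convex as the closure of a convex set in a Hadamard manifold, and so Theorem~\ref{th:ebesub} applies. For $p\in\bar S$ the inequality is then exactly $D_\psi(p,q)\ge0$.

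\emph{Item (ii).} Fix $q\in S$ and write
\[
D_\psi(\cdot,q)=\psi(\cdot)-\psi(q)+c\,B_{q,w}(\cdot),\qquad c=\|{\rm grad}\,\psi(q)\|\ge0,\quad w={\rm grad}\,\psi(q).
\]
The first term is strictly convex on $\bar S$ by condition (ii) of Definition~\ref{def:bf}. The term $\psi(q)$ is constant. The term $c\,B_{q,w}$ is a nonnegative multiple of a convex function, by Lemma~\ref{le:CharactBusFunc} when $w\neq0$ and because $B_{q,0}=d(q,\cdot)$ is convex when $w=0$.

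Along any geodesic $\gamma_{pp'}$ with $p\ne p'$ in $\bar S$ and any $t\in(0,1)$, summing the strict inequality for $\psi$ with the non-strict inequality for $cB_{q,w}$ gives the strict convexity inequality for $D_\psi(\cdot,q)$. The only care needed is the extended-value convention; since $\psi$ is finite wherever it is evaluated in the statement, there is nothing further to check.

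The main obstacle is the domain issue in (i): justifying the use of Theorem~\ref{th:ebesub} for a function defined only on $\bar S$, and identifying ${\rm grad}\,\psi(q)$ as a subgradient. The rest is routine.
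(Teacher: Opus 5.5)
Your proposal is correct and follows the paper's proof: item (i) comes from Theorem~\ref{th:ebesub} with $\sigma=0$ applied to $s={\rm grad}\,\psi(q)$, and item (ii) from the convexity of $B_{q,{\rm grad}\,\psi(q)}$ (Lemma~\ref{le:CharactBusFunc}) combined with the strict convexity of $\psi$ in Definition~\ref{def:bf}(ii); your extra details (extending $\psi$ by $+\infty$ off the convex set $\bar S$, and the case ${\rm grad}\,\psi(q)=0$) fill in what the paper leaves implicit. One small point: in (i) what you actually need is ${\rm grad}\,\psi(q)\in\partial\psi(q)$, and your uniqueness argument does not show this. It follows at once from Proposition~\ref{pr:sfsd}, because $\psi'(q,v)=\langle {\rm grad}\,\psi(q),v\rangle$ for all $v$.
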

\begin{proof}
If $\psi$ and $D_{\psi}$ satisfy Definition~\ref{def:bf}, then it follows from Theorem~\ref{th:ebesub}  that function \eqref{eq:deBFDef} satisfies item $(i)$. It follows from Lemma~\ref{le:CharactBusFunc} that $B_{q, {\rm grad} \psi(q)}$  is convex. Thus, by using item $(ii)$ of Definition~\ref{def:bf} the  proof of item $(ii)$ follows.
\end{proof}

The Bregman distance, introduced as a fundamental concept in \cite{Bregma1967}, has been extensively studied in the Euclidean context, as evidenced by various references including \cite{Bauschke1997, ChenTeboulle1993, dePierroIusem1996, Kiwiel1997}. The idea of developing the Bregman-Busemann distance is inspired by Example~\ref{ex:defBFNDefK0}, which illustrates how our Definition~\ref{def:bf} serves as a natural extension of the established concept of the Bregman distance in Euclidean spaces. Furthermore, it is noteworthy that the concept of Bregman functions has been previously introduced in the context of Hadamard manifolds, utilizing the function \eqref{eq:Sugdlhs} in its definition, as mentioned in \cite{PapaQuiro2009, PapaQuiro2013}. However, this definition results in a Bregman function that lacks convexity in the first coordinate. This limitation has been acknowledged in the papers \cite{PapaQuiro2009, PapaQuiro2013}, where it led to erroneous outcomes, thereby restricting its applicability. It is important to highlight that the introduction of the Bregman-Busemann distance in Definition~\ref{def:bf} addresses this limitation, as demonstrated in Proposition~\ref{le;ipbbf}.

\section{Appendix} \label{Appendix}

\begin{appendixthm}\label{app:A3}
\begin{proof}[Busemann functions on hyberbolic space in Example~\ref{ex:BusHype}:] To simplify the notations we  set $\alpha(t):=-\kappa \left\langle p ,  \exp_{q}(tv)\right\rangle$ with $v\neq 0$. Since
$\left\langle p , q\right\rangle\leq -1/\kappa$, for all  $p, q\in {\mathbb
H}^{n}_{\kappa}$, we have $\alpha(t)\geq 1$. Thus,    ${\rm arcosh}
(\alpha(t))=\ln\left(\alpha(t)+\sqrt{\alpha(t)^2-1}\right)$, or equivalently, 
\begin{equation} \label{eq:mdr}
{\rm arcosh} (\alpha(t))=\ln\left(\alpha(t)(1+\beta(t))\right), \quad \quad
\mbox{where}~  \beta(t):=\left({1-{(1}/{\alpha(t))^2}}\right)^{1/2}.
\end{equation}
By using \eqref{eq:Intdist}  we  obtain  that  $d_{\kappa}(p,  \exp_{q}(tv))=({1}/{\sqrt{\kappa}}){\rm arcosh} (\alpha(t))$.  Hence, taking into account that  $({1}/{\sqrt{\kappa}})\ln e^{-{\sqrt{\kappa}}\|v\|t}=-\|v\|t$, it follows from \eqref{eq:mdr} that 
\begin{equation} \label{eq:Intdistga}
d_{\kappa}(p,  \exp_{q}(tv))-\|v\|t=\frac{1}{\sqrt{\kappa}}\ln\left(e^{-{\sqrt{\kappa}}\|v\|t}\alpha(t)\left(1+\beta(t)\right)\right).
\end{equation}
Due to $\alpha(t):=-\kappa \left\langle p ,  \exp_{q}(tv)\right\rangle$, by the definitions of  $ \cosh$ and  $\sinh$ we  obtain  that 
\begin{align*}
\alpha(t)&= -\kappa \frac{1}{2}\left(e^{\sqrt{\kappa}\,\|v\|t}+ e^{-\sqrt{\kappa}\,\|v\|t}\right) \left\langle p, {q}\right\rangle- \sqrt{\kappa}  \frac{1}{2}\left(e^{\sqrt{\kappa}\,\|v\|t}- e^{-\sqrt{\kappa}\,\|v\|t}\right) \frac{1}{\|v\|}\left\langle p, {v}\right\rangle \\
             &= -\frac{1}{2}e^{\sqrt{\kappa}\,\|v\|t}\left(\kappa \left( 1+   e^{-2\sqrt{\kappa}\,\|v\|t}\right) \left\langle p, {q}\right\rangle+ \sqrt{\kappa} \left( 1-   e^{-2\sqrt{\kappa}\,\|v\|t}\right) \frac{1}{\|v\|}\left\langle p, {v}\right\rangle \right).
\end{align*}
Hence, we  have   $\lim_{t \to +\infty}  \alpha(t)^2=+\infty,$
$
\lim_{t \to \infty}  e^{-{\sqrt{\kappa}}\|v\|t}\alpha(t)
= -\frac{1}{2}\left(\kappa\left\langle p, {q}\right\rangle+\sqrt{\kappa}
\frac{1}{\|v\|}\left\langle p, {v}\right\rangle \right)
$ and $\lim_{t \to \infty} \beta(t)=1$.  Thus, we conclude from \eqref{eq:Intdistga} that 
\begin{equation*}
 \lim_{t \to+ \infty} \left(d\left({p}, \exp_{q}(tv)\right)-\|v\|t\right)=
 \frac{1}{\sqrt{\kappa}}\ln\left(- \left\langle p, \kappa\,{q}+
 {\sqrt{\kappa}}\,\frac{1}{\|v\|}{v}\right\rangle \right).
\end{equation*}
Therefore, the last equality together with \eqref{eq:defBFNDef} gives \eqref{eq:BFkSF}  the desired equality. Now, we are going to compute the gradient of  Busemann functions. It follows from \eqref{eq:grad} that 
\begin{equation} \label{eq:gradn}
{\rm grad} B_{{q}, v}(p):= {\rm Proj}^{\kappa}_p {\rm J}B_{{q}, v}'(p)=  {\rm J}B_{{q}, v}'(p)+ \kappa \left\langle {\rm J}B_{{q}, v}'(p), p\right\rangle p.
\end{equation}
To simplify the notations we set  $B_{{q}, v}(p)
:= ({1}/{\sqrt{\kappa}})\ln (\eta(p))$, where $\eta(p):= -\left\langle p,
\kappa\,{q}+ {\sqrt{\kappa}}\frac{1}{\|v\|}{v}\right\rangle$.
Thus, taking the Euclidean derivative  we have
\begin{equation} \label{eq:comBFed}
	B_{{q}, v}'(p)=\frac{1}{\sqrt{\kappa}}\frac{\eta'(p)}{\eta(p)}, \qquad
	\quad  \eta'(p)= -{\rm J}\left(\kappa\,{q}+
	{\sqrt{\kappa}}\frac{1}{\|v\|}{v}\right).
\end{equation}
Substituting the last equalities \eqref{eq:comBFed} into \eqref{eq:gradn}, after some algebraic manipulations,   we obtain  that 
\begin{equation*}
	{\rm grad} B_{{q}, v}(p)=\frac{1}{\sqrt{\kappa}}\frac{1}{\eta(p)}\left({\rm J}\eta'(p)+ \kappa \left\langle {\rm J}\eta'(p), p\right\rangle \,p \right).
\end{equation*}
Substituting \eqref{eq:comBFed} into the last  equality and considering  that
${\rm J}{\rm J}=I$,  we obtain  \eqref{eq:grad2}. Note that some calculations show that \eqref{eq:grad2} implies that $\|{\rm grad} B_{{q}, v}(p)\|=1$, as stated in Lemma~\ref{le:CharactBusFunc}. Particularly, if $p=q$, then due to  $\langle q, q\rangle=-{1}/{\kappa}$ and $\langle p, v \rangle=0$, the final equation simplifies to
\begin{equation*} 
	{\rm grad} B_{{q}, v}(q)=-\frac{1}{\sqrt{\kappa}}\left(\kappa\,{q}+
	{\sqrt{\kappa}}\frac{1}{\|v\|}{v}- \kappa q
\right)=-\frac{1}{\|v\|}{v}, 
\end{equation*}
which is in accordance with the last statement of Lemma~\ref{le:CharactBusFunc}.
\end{proof}
\end{appendixthm}

\begin{appendixthm}\label{app:A4}
We now present the detailed computation of the explicit formula given in Example~\ref{ex:BusSDP} for the Busemann functions and its gradient on the manifold of symmetric positive definite matrices. It is worth noting that this computation does not require any prior knowledge of the theory of symmetric spaces. To this end, we first prove two auxiliary lemmas.
\begin{lemma}\label{theo:main.commute}
Take $ X \in  \mathcal{P}(n) $ and $ V\in \mathcal{S}(n) $. If $X$ commutes with $V$, then 
$$
B_{I , V}(X)   = - \frac{1}{\|V\|} {\rm tr} \left( V {\rm Log}\left( X \right) \right).
$$
\end{lemma}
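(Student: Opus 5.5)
The plan is to compute the limit in the definition \eqref{eq:defBFNDef} directly, or equivalently its squared form from Proposition~\ref{pr:afcbf}, using the fact that at the identity $\exp_I(tV)={\rm Exp}(tV)$ by \eqref{eq:ExpSPD}. Since $X$ and $V$ commute and are both symmetric, they are simultaneously diagonalizable. So I would write $X=Q\,{\rm Diag}(e^{x_1},\dots,e^{x_n})Q^T$ and $V=Q\,{\rm Diag}(v_1,\dots,v_n)Q^T$ with a common orthogonal $Q$. Then ${\rm Exp}(tV)$ also commutes with $X$, and ${\rm Exp}(tV)^{-1}X=Q\,{\rm Diag}(e^{x_i-tv_i})Q^T$. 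The case $V=0$ is excluded implicitly, since the formula divides by $\|V\|$.

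Using \eqref{eq:RiemannianDistance}, together with Lemma~\ref{lem:Log.property} to move the conjugation by $Q$ out of ${\rm Log}$, I get
$$
d^2\bigl(X,{\rm Exp}(tV)\bigr)=\sum_{i=1}^n (x_i-tv_i)^2 .
$$
At $I$ the metric \eqref{eq:AffineInvariantMetric} is the Frobenius one, so $\|V\|^2=\sum_i v_i^2$. Therefore
$$
d^2\bigl(X,{\rm Exp}(tV)\bigr)-(\|V\|t)^2=\sum_i x_i^2-2t\sum_i x_iv_i .
$$
Dividing by $2\|V\|t$ and letting $t\to+\infty$ gives, by Proposition~\ref{pr:afcbf},
$$
B_{I,V}(X)=-\frac{1}{\|V\|}\sum_i x_iv_i .
$$
It remains to identify $\sum_i x_iv_i={\rm tr}\bigl(Q\,{\rm Diag}(v_i)Q^T Q\,{\rm Diag}(x_i)Q^T\bigr)={\rm tr}\bigl(V\,{\rm Log}(X)\bigr)$, which uses ${\rm Log}(X)=Q\,{\rm Diag}(x_i)Q^T$. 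This gives the claim.

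The only genuinely delicate step is the simultaneous diagonalization and the justification that ${\rm Log}({\rm Exp}(tV)^{-1}X)={\rm Log}(X)-tV$. This holds because the matrices commute and share an eigenbasis, and I would state it explicitly rather than invoke a general identity ${\rm Log}(AB)={\rm Log}A+{\rm Log}B$, which fails without commutativity. Everything after that is the elementary limit above.

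An alternative route is to note that on the flat totally geodesic submanifold $\{Q\,{\rm Diag}(e^{y})Q^T\}$ the identity of Lemma~\ref{le:busfunc} holds with equality. That would also yield $-\langle V,\log_I X\rangle/\|V\|$. However, it requires arguing that the Busemann limit is computed within that flat, which the direct computation above avoids, so I would use the direct computation.
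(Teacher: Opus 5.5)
Your proof is correct and follows essentially the same route as the paper. Both arguments compute $B_{I,V}(X)$ via Proposition~\ref{pr:afcbf} and the distance formula \eqref{eq:RiemannianDistance}, use commutativity to split the logarithm as $\pm\bigl(tV-{\rm Log}(X)\bigr)$, and finish with the same elementary limit. The only difference is cosmetic: you justify the splitting by an explicit simultaneous orthogonal diagonalization, whereas the paper asserts ${\rm Log}(X^{-1/2}{\rm Exp}(tV)X^{-1/2})={\rm Log}({\rm Exp}(tV))-{\rm Log}(X)$ directly and then factors the trace as a difference of squares.
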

\begin{proof}
Since $X$ commutes with $V$, applying Proposition~\ref{pr:afcbf} and equation~\eqref{eq:RiemannianDistance}, and after some algebraic manipulations, we obtain
\begin{align*}
&B_{I, V}(X)  = \frac{1}{2\|V\|  } \lim_{t \to+ \infty}   \frac{ d^2(  {\rm Exp}(tV) , X ) - ( t\|V\| )^2 }{t}\\
                   & =   \frac{1}{2\|V\|  } \lim_{t \to+ \infty} \frac{1}{t} {\rm tr} \big(  \big[ {\rm Log} ( X^{-1/2}   {\rm Exp}(t V)  X^{-1/2})  \big]^2 - \left( t V  \right)^2 \big)\\
                  & =  \frac{1}{2\|V\|} \lim_{t \to+ \infty}   \frac{1}{t} {{\rm tr} \big(\big[  {\rm Log} ( X^{-1/2}    {\rm Exp}(tV)   X^{-1/2}) - t V \big] \big[     {\rm Log} (X^{-1/2}   {\rm Exp}(tV)  X^{-1/2}) + t V  \big] \big) }.
\end{align*}
Since $X$ commutes with $V$, we have  $ {\rm Log} ( X^{-1/2}   {\rm Exp}(tV)  X^{-1/2})={\rm Log}({\rm Exp}(tV))- {\rm Log}(X)$. Thus, the last equality can be written as 
\begin{align*}
B_{I, V}(X) 
  &= \frac{1}{2\|V\|}  \lim_{t \to+ \infty}   \frac{1}{t}{{\rm tr} \left(  \left[  {\rm Log}\left(   {\rm Exp}(tV)    \right) -   {\rm Log}\left( X  \right) -  t V   \right] \left[ {\rm Log}\left( {\rm Exp}(tV)    \right) -   {\rm Log}\left( X  \right) +  t V  \right] \right) }\\
& = \frac{1}{2\|V\|}  \lim_{t \to+ \infty}   \frac{1}{t}{\rm tr} \left(\left[-{\rm Log}\left( X  \right)  \right] \left[ 2t V - {\rm Log}\left( X  \right)  \right] \right) = - \frac{1}{\|V\|}       {\rm tr} \left(   V {\rm Log}\left( X  \right)   \right),
\end{align*}
which is the desired equality.
\end{proof}
Let  $ L $ be  the  lower triangular matrix  given in Example~\ref{ex:BusSDP}.  For our analysis it is convenient to use the decomposition of the form $L= W Z$, with 
\begin{equation} \label{eq:fmatrces}
W := \begin{bmatrix}
I_{n_1} & 0 & \cdots & 0 \\
W_{2 1} & I_{n_2} & \cdots & 0 \\
\vdots & \vdots & \ddots & \vdots \\
W_{k1} & W_{k2} & \cdots & I_{n_k}
\end{bmatrix}
\qquad  \qquad 
Z := \begin{bmatrix}
L_{n_1} & 0 & \cdots & 0 \\
0& L_{n_2} & \cdots & 0 \\
\vdots & \vdots & \ddots & \vdots \\
0 & 0 & \cdots & L_{n_k}
\end{bmatrix}.
\end{equation} 
\begin{lemma}\label{lem:aux.main.theo}
Let  $D$  and $L$ matrices as defined  in Example~\ref{ex:BusSDP}, and  $W$ and   $Z$  matrices  as defined in \eqref{eq:fmatrces}. Thus, the following equalities   hold:
\begin{enumerate}
\item[(i)] $\lim_{t\to +\infty} d\big( W^{-1} {\rm Exp}(t D) [W^{T}]^{-1}, {\rm Exp}(t D) \big)  = 0$;
\item[(ii)] $\lim_{t \to +\infty} d\big(  L^{-1}  {\rm Exp}\left( t   D  \right) \left( L^{T} \right)^{-1} , {\rm Exp}\left( tD \right) \big) = d(I,ZZ^T)$.
\end{enumerate}
\end{lemma}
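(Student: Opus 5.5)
We use the affine-invariant distance formula \eqref{eq:RiemannianDistance}, $d(A,B)^2={\rm tr}\,{\rm Log}^2(B^{-1}A)$. Since $d(A,B)=d(A',B')$ whenever $B^{-1}A$ and $B'^{-1}A'$ are similar, we may reduce each quantity to a spectral computation for a matrix of the form $E_t^{-1}MN^{-1}E_t$ with $E_t:={\rm Exp}(tD/2)$. Throughout, $D$ is block diagonal with blocks $\lambda_iI_{n_i}$, ordered so that $\lambda_1<\dots<\lambda_k$. Hence conjugation by $E_t$ multiplies the $(i,j)$ block of a matrix by $e^{t(\lambda_j-\lambda_i)/2}$.

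\emph{Item (i).} Note that ${\rm Exp}(tD)^{-1}W^{-1}{\rm Exp}(tD)[W^T]^{-1}$ is similar to
\[
A_t:=E_t^{-1}W^{-1}E_t\,(E_t^{-1}W^{-1}E_t)^T .
\]
Set $\widetilde W_t:=E_t^{-1}W^{-1}E_t$. Then $\widetilde W_t$ is block lower triangular with identity diagonal blocks, and its $(i,j)$ block with $i>j$ is multiplied by $e^{t(\lambda_j-\lambda_i)/2}\to0$. Hence $\widetilde W_t\to I$, so $A_t=\widetilde W_t\widetilde W_t^T\to I$. By continuity of ${\rm Log}$ at $I$ we get $d(A_t,I)\to0$, and therefore $d\bigl(W^{-1}{\rm Exp}(tD)[W^T]^{-1},{\rm Exp}(tD)\bigr)=d(A_t,I)\to0$.

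\emph{Item (ii).} Since $L=WZ$, we have $L^{-1}{\rm Exp}(tD)(L^T)^{-1}=Z^{-1}\bigl(W^{-1}{\rm Exp}(tD)[W^T]^{-1}\bigr)[Z^T]^{-1}$. Because $Z$ is block diagonal with the same block structure as $D$, it commutes with ${\rm Exp}(tD)$. The plan is as follows.

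First, use the triangle inequality with the intermediate point $Z^{-1}{\rm Exp}(tD)[Z^T]^{-1}$. The congruence invariance $d(PAP^T,PBP^T)=d(A,B)$ follows from the similarity formula, and applying it with $P=Z^{-1}$ shows that the distance from $L^{-1}{\rm Exp}(tD)(L^T)^{-1}$ to this intermediate point equals the quantity in (i), which tends to $0$.

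Second, compute $d\bigl(Z^{-1}{\rm Exp}(tD)[Z^T]^{-1},{\rm Exp}(tD)\bigr)$. The relevant product is
\[
{\rm Exp}(tD)^{-1}Z^{-1}{\rm Exp}(tD)[Z^T]^{-1}=Z^{-1}[Z^T]^{-1}=(ZZ^T)^{-1},
\]
using the commutation. Its distance to $I$ is therefore $d\bigl((ZZ^T)^{-1},I\bigr)=d(I,ZZ^T)$, which is independent of $t$.

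Combining the two steps gives $\bigl|d(L^{-1}{\rm Exp}(tD)(L^T)^{-1},{\rm Exp}(tD))-d(I,ZZ^T)\bigr|\to0$.

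The main obstacle is verifying the conjugation scaling direction in (i). One must check that the strictly lower blocks of $W^{-1}$ (which remains block unit lower triangular) pick up the factor $e^{t(\lambda_j-\lambda_i)/2}$ with $i>j$, so that they decay rather than blow up. This is where the ordering $\lambda_1<\dots<\lambda_k$ is used. A second point is to justify the similarity-based reductions, namely that $d$ depends only on the spectrum of $B^{-1}A$, via Lemma~\ref{lem:Log.property}.
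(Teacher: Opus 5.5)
Your proof is correct, but for item (ii) you take a different route from the paper.

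\textbf{The paper's route.} The paper proves (ii) directly. By affine invariance,
$d\big(L^{-1}{\rm Exp}(tD)(L^T)^{-1},{\rm Exp}(tD)\big)=d\big(I,A(t)A(t)^T\big)$ with $A(t):={\rm Exp}(tD/2)^{-1}L\,{\rm Exp}(tD/2)$. This conjugation leaves the diagonal blocks $L_{n_i}$ unchanged and damps the strictly lower blocks. Hence $A(t)\to Z$, and continuity gives the limit $d(I,ZZ^T)$. Item (i) is the special case where $W$ plays the role of $L$ and $I$ plays the role of $Z$.

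\textbf{Your route.} You prove (i) first, conjugating $W^{-1}$ rather than $W$, which is equivalent. You then deduce (ii) from (i):
\begin{itemize}
\item split $L=WZ$;
\item apply congruence invariance with $P=Z^{-1}$;
\item use the triangle inequality through the intermediate point $Z^{-1}{\rm Exp}(tD)[Z^T]^{-1}$;
\item use that $Z$ commutes with ${\rm Exp}(tD)$, so the remaining distance is exactly independent of $t$.
\end{itemize}

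\textbf{Comparison.} The paper's argument is shorter and treats both items uniformly: each is an instance of block-lower-triangular conjugation converging to the block-diagonal part, with no triangle inequality or commutation needed. Your argument puts all the $t$-dependence into the unipotent factor $W$. It is essentially the same triangle-inequality mechanism the paper uses later in \eqref{eq:BusSDP:p2} when computing $B_{Y,V}$.

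\textbf{A small slip.} In fact $Z^{-1}[Z^T]^{-1}=(Z^TZ)^{-1}$, not $(ZZ^T)^{-1}$. This does not affect the result. Since $Z^TZ=Z^{-1}(ZZ^T)Z$ is similar to $ZZ^T$, and you already observed that $d$ depends only on the spectrum of $B^{-1}A$, the value $d(I,ZZ^T)$ is unchanged.
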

\begin{proof}
We prove item~(ii) only, as item~(i) is analogous. The second part of Lemma~\ref{lem:Log.property} implies that 
\begin{equation} \label{eq:imme}
d\big(L^{-1}  {\rm Exp}\left( t   D  \right) \left( L^{T} \right)^{-1} , {\rm Exp}\left( tD \right) \big)  = d\big(  I  , {\rm Exp}^{-1/2} \left( tD \right) L {\rm Exp}(tD) L^{T} {\rm Exp}^{-1/2} ( tD)\big).
\end{equation}
To simplify the notation  set  $A(t):= {\rm Exp}^{-1/2} \left( tD \right)  L  {\rm Exp}^{1/2} \left( tD \right)$. Note that $A(t)$ can be written as 
$$
A(t) :=\begin{bmatrix}
L_{n_1} & 0 & \cdots & 0 \\
e^{\frac{t(\lambda_1 - \lambda_2)}{2}} L_{21} & L_{n_2} & \cdots & 0 \\
\vdots & \vdots & \ddots & \vdots \\
e^{\frac{t(\lambda_1 - \lambda_k)}{2}} L_{k1} & e^{\frac{t(\lambda_2 - \lambda_k)}{2}} L_{k2} & \cdots & L_{n_k}
\end{bmatrix}.
$$
Thus, equality \eqref{eq:imme}  is equivalent to 
$
d\big(L^{-1}  {\rm Exp}\left( t   D  \right) \left( L^{T} \right)^{-1} , {\rm Exp}\left( tD \right)\big)= d\big( I  ,  A(t) A(t)^{T} \big).
$
Since $ \lambda_1 < \dots < \lambda_k $, we can conclude that $\lim_{t \to +\infty} A(t) = Z$, which completes the proof.
\end{proof}
\begin{proof}[Busemann functions on $\mathcal{P}(n)$ in Example~\ref{ex:BusSDP}:] 
Let the spectral and Cholesky factorizations be
\begin{equation} \label{eq:ccfb}
    Y^{-1/2} V Y^{-1/2}=U D U^{T}, \qquad U^{T} Y^{-1/2} X Y^{-1/2} U=L L^{T}=W Z Z^{T} W^{T},
\end{equation} 
with $L=WZ$.  Since
$
    {\rm exp}_{Y}(tV)
      =Y^{1/2}{\rm Exp}\!\bigl(tY^{-1/2} V Y^{-1/2}\bigr)Y^{1/2}
      =Y^{1/2}U\,{\rm Exp}(tD)\,U^{T}Y^{1/2},
$
 Lemma~\ref{lem:Log.property} and \eqref{eq:ccfb} imply that
\begin{align}\label{eq:BusSDP:p1}
d\bigl({\rm exp}_{Y}(tV),X\bigr)
   &= d\bigl({\rm Exp}(tD),U^{T}Y^{-1/2} X Y^{-1/2}U\bigr)\nonumber\\
   &= d\bigl({\rm Exp}(tD),W Z Z^{T} W^{T}\bigr)\nonumber\\
   &= d\bigl(W^{-1}{\rm Exp}(tD)[W^{T}]^{-1}, Z Z^{T}\bigr).
\end{align}
On the other hand, by applying the triangle inequality, we obtain
\begin{align}\label{eq:BusSDP:p2}
&d\bigl({\rm Exp}(tD),Z Z^{T}\bigr)
 -d\bigl(W^{-1}{\rm Exp}(tD)[W^{T}]^{-1},{\rm Exp}(tD)\bigr)\nonumber\\
&\le d\bigl(W^{-1}{\rm Exp}(tD)[W^{T}]^{-1},Z Z^{T}\bigr)\\
&\le d\bigl({\rm Exp}(tD),Z Z^{T}\bigr)
      +d\bigl(W^{-1}{\rm Exp}(tD)[W^{T}]^{-1},{\rm Exp}(tD)\bigr).\nonumber
\end{align}
 Adding $- \|tV\|$ to every term in~\eqref{eq:BusSDP:p2} and using item (i) of  Lemma~\ref{lem:aux.main.theo} we obtain that 
$$
  \lim_{t\to\infty}   \bigl( d(W^{-1}{\rm Exp}(tD)[W^{T}]^{-1},Z Z^{T}) - \|tV\| \bigr)
  =\lim_{t\to\infty} \bigl( d({\rm Exp}(tD),Z Z^{T}) - \|tV\| \bigr).
$$
Hence, by \eqref{eq:defBFNDef} and \eqref{eq:BusSDP:p1}, and using the fact that
$\|tV\|=\|tD\|_{I}$, we have 
$$
   B_{Y, V}(X)= \lim_{t\to\infty}\!\bigl(  d({\rm exp}_{Y}(tV),X) - \|tV\| \bigr)
  =\lim_{t\to\infty}\!\bigl(  d({\rm Exp}(tD),Z Z^{T}) - \|tD\|_{I} \bigr)
  =B_{I,D}(Z Z^{T}).
$$
Thus,  since $Z Z^{T}$ commutes with $D$, Lemma~\ref{theo:main.commute} gives
\begin{equation} \label{eq:qfep}
  B_{Y, V}(X) = - \frac{1}{\|D\|_{I}} {\rm tr} \big( D {\rm Log}( Z Z^{T} ) \big).
\end{equation}
To conclude the proof, it remains to show that the right-hand side above coincides with the right-hand side of~\eqref{theo:bus.I.eq}. For that, first observe that
$$
D{\rm Log}\left( ZZ^T \right) = \begin{bmatrix}
\lambda_1 {\rm Log}\left( L_{n_1} L_{n_1}^T \right)  & 0 & \cdots & 0 \\
0 & \lambda_2 {\rm Log}\left( L_{n_2} L_{n_2}^T \right)  & \cdots & 0 \\
\vdots & \vdots & \ddots & \vdots \\
0 & 0 & \cdots & \lambda_k {\rm Log}\left( L_{n_k} L_{n_k}^T \right)
\end{bmatrix},
$$
and therefore
$$
  {\rm tr} \bigl(D\,{\rm Log}(Z Z^{T})\bigr)
  =\sum_{i=1}^{k}\lambda_i {\rm tr} \bigl({\rm Log}(L_{n_i}L_{n_i}^{T})\bigr)
  =\sum_{i=1}^{k}\lambda_i\ln \Bigl(\det\bigl(L_{n_i}L_{n_i}^{T}\bigr)\Bigr)
  =2\sum_{i=1}^{k}\lambda_i\sum_{j=\alpha_{i-1}+1}^{\alpha_i}\ln\bigl((L)_{jj}\bigr).
$$
Combining the last equality with  \eqref{eq:qfep} and taking into account that 
$
  \|D\|_{I}=  \left(  n_1\lambda_1^{2}+\cdots+n_k\lambda_k^{2}\right)^{1/2} ,
$
we obtain the desired equality.
\end{proof}
\begin{proof}[Gradient of Busemann functions on $\mathcal{P}(n) $   in Example~\ref{ex:BusSDP}:] 
Since $\bar{U} := L^{T} [X^{-1/2} Y^{1/2} U]^{T}$ is an orthogonal matrix, it follows from \eqref{eq:AffineInvariantMetric}, \eqref{eq:ExpSPD}, and \eqref{eq:LogSPD} that
\begin{align*}
\left\langle \log_{X} (\exp_{Y}(tV)) , K \right\rangle_X
& =   {\rm tr}\left(X^{-1/2}{\rm Log} \left( X^{-1/2}  Y^{1/2} {\rm Exp}\left( t  U D  U^{T} \right) Y^{1/2}  X^{-1/2} \right)  X^{-1/2} K \right) \\
& =   {\rm tr}\left({\rm Log} \left( \big[ X^{-1/2}  Y^{1/2} U \big] {\rm Exp}\left( t  D  \right) \bigl[    X^{-1/2} Y^{1/2} U \bigr]^{T} \right)  X^{-1/2} K X^{-1/2} \right) \\
& =   {\rm tr}\left({\rm Log} \left( \bar{U}^{T}  L^{-1}  {\rm Exp}\left( t  D  \right) \left( L^{T} \right)^{-1}  \bar{U}  \right)  X^{-1/2} K X^{-1/2} \right) \\
& =   {\rm tr}\left( \bar{U}^{T}  {\rm Log} \left(  L^{-1}  {\rm Exp}\left( t  D  \right) \left( L^{T} \right)^{-1}   \right)  \bar{U} X^{-1/2} K X^{-1/2} \right) \\
& =   {\rm tr}\left(   {\rm Log} \left(  L^{-1}  {\rm Exp}\left( t  D  \right) \left( L^{T} \right)^{-1}   \right)  \bar{U} X^{-1/2} K X^{-1/2} \bar{U}^{T} \right) 
\end{align*}
for all $K\in {\cal S}(n)$. Substituting $ K := - (1/ \|D\|_I ) \left(X^{1/2} \bar{U}^{T} D \bar{U} X^{1/2} \right) $ into the above equality and performing some algebraic simplifications, we obtain 
$$
 \left\langle \log_{X} (\exp_{Y}(tV)) , K \right\rangle_X 
 = - \frac{1}{\|D\|_I }  {\rm tr}\left(   {\rm Log} \left(  L^{-1}  {\rm Exp}\left( t  D  \right) \left( L^{T} \right)^{-1}   \right)  D\right).
$$
Applying this identity, together with further algebraic manipulations and the Cauchy–Schwarz inequality, we conclude that
\begin{align*}
& \lim_{t\to \infty} \frac{1}{ t }  \left\langle \log_{X} (\exp_{Y}(tV)) , K \right\rangle_X 
=   \frac{1}{\|D\|_I} \lim_{t\to \infty} \frac{1}{ t }  {\rm tr}\left( \left[ tD -  {\rm Log} \left(  L^{-1}  {\rm Exp}\left( t  D  \right) \left(  L^{T} \right)^{-1}   \right)    \right] D \right) 
  -  \|D\|_I \\
   &\leq   \frac{1}{\|D\|_I} \lim_{t\to \infty} \frac{1}{ t }  \left\|  tD  -  {\rm Log} \left(  L^{-1}  {\rm Exp}\left( t  D  \right) \left(  L^{T} \right)^{-1}   \right)    \right\|_I \left\| D \right\|_I
  -  \|D\|_I \\
     & =    \lim_{t\to \infty} \frac{1}{ t }  \left\| {\rm log}_I \left( {\rm Exp} \left(  tD \right) \right) -  {\rm log}_I \left(  L^{-1}  {\rm Exp}\left( t  D  \right) \left(  L^{T} \right)^{-1}   \right)    \right\|_I 
  -  \|D\|_I.
\end{align*}
Using \eqref{eq:log-norm-ineq} and \eqref{eq:LogSPD} together with item (ii) of Lemma~\ref{lem:aux.main.theo} we obtain that 
\begin{multline*}
 \lim_{t\to \infty} \frac{1}{ t }  \left\| {\rm log}_I \left( {\rm Exp} \left(  tD \right) \right) -  {\rm log}_I \left(  L^{-1}  {\rm Exp}\left( t  D  \right) \left(  L^{T} \right)^{-1}   \right)    \right\|_I  \leq \\  
  \lim_{t\to \infty} \frac{1}{ t }  d \left(  {\rm Exp} \left(  tD \right) ,  L^{-1}  {\rm Exp}\left( t  D  \right) \left(  L^{T} \right)^{-1}   \right)    =0.
\end{multline*}
Hence, it follows from two last inequalities that 
$$
 \lim_{t\to \infty} \frac{1}{ t }  \left\langle \log_{X} (\exp_{Y}(tV)) , K \right\rangle_X\leq    -  \|D\|_I.
$$
Therefore, applying Lemma~\ref{le:CharactBusFunc} and taking into account that $ \|V\| = \|D\|_I $ and $ \left\| {\rm grad} B_{Y, V}(X) \right\|_X = 1 = \left\| K \right\|_X $, we obtain
\begin{align*}
 \left\| {\rm grad} B_{Y, V}(X)  - K \right\|^2
& = \left\| {\rm grad} B_{Y, V}(X) \right\|^2_X - 2 \left\langle {\rm grad} B_{Y, V}(X), K \right\rangle_X + \left\| K \right\|^2_X \\
& = 2 - 2 \left\langle {\rm grad} B_{Y, V}(X), K \right\rangle_X  = 2 + \frac{2}{\| D \|_I} \lim_{t \to \infty} \frac{1}{t} \left\langle \log_{X}(\exp_{Y}(tV)), K \right\rangle_X \leq 0,
\end{align*}
which completes the proof of \eqref{eq:gradbf}, since $
  \|D\|_{I}=  \left(  n_1\lambda_1^{2}+\cdots+n_k\lambda_k^{2}\right)^{1/2} 
$.
\end{proof}
\end{appendixthm}
\bibliographystyle{habbrv}
\bibliography{OptConvHyperbolicBusemann}

@Article{Criscitiello2025,
  AUTHOR = {Criscitiello, Christopher and Kim, Jungbin},
  TITLE = {Horospherically Convex Optimization on {H}adamard Manifolds. {P}art {I}: Analysis and Algorithms},
  FJOURNAL = {arXiv preprint},
  JOURNAL = {arXiv},
  VOLUME = {2505},
  PAGES = {16970},
  YEAR = {2025},
  LANGUAGE = {English},
  EPRINT = {2505.16970},
  ARCHIVEPREFIX = {arXiv},
  PRIMARYCLASS = {math.OC},
  NOTE = {Available at \url{https://arxiv.org/abs/2505.16970}}
}

@Article{Goodwin2024,
  AUTHOR = {Goodwin, Alex and Lewis, Adrian S. and L{\'o}pez-Acedo, Gema and Nicolae, Adrian},
  TITLE = {A subgradient splitting algorithm for optimization on nonpositively curved metric spaces},
  FJOURNAL = {arXiv preprint},
  JOURNAL = {arXiv},
  VOLUME = {2412},
  PAGES = {06730},
  YEAR = {2024},
  LANGUAGE = {English},
  EPRINT = {2412.06730},
  ARCHIVEPREFIX = {arXiv},
  PRIMARYCLASS = {math.OC},
  NOTE = {Available at \url{https://arxiv.org/abs/2412.06730}}
}

@Article{Aragon2020,
 Author = {Arag{\'o}n Artacho, Francisco J. and Vuong, Phan T.},
 Title = {The boosted difference of convex functions algorithm for nonsmooth functions},
 FJournal = {SIAM Journal on Optimization},
 Journal = {SIAM J. Optim.},
 ISSN = {1052-6234},
 Volume = {30},
 Number = {1},
 Pages = {980--1006},
 Year = {2020},
 Language = {English},
 DOI = {10.1137/18M123339X},
 zbMATH = {7181880},
 Zbl = {1461.65119}
}

@Article{DeOliveira2020,
 Author = {de Oliveira, Welington},
 Title = {Sequential difference-of-convex programming},
 FJournal = {Journal of Optimization Theory and Applications},
 Journal = {J. Optim. Theory Appl.},
 ISSN = {0022-3239},
 Volume = {186},
 Number = {3},
 Pages = {936--959},
 Year = {2020},
 Language = {English},
 DOI = {10.1007/s10957-020-01721-x},
 zbMATH = {7246149},
 Zbl = {1450.90033}
}

@Article{PapaQuiro2013,
 Author = {Papa Quiroz, E. A.},
 Title = {An extension of the proximal point algorithm with {Bregman} distances on {Hadamard} manifolds},
 FJournal = {Journal of Global Optimization},
 Journal = {J. Glob. Optim.},
 ISSN = {0925-5001},
 Volume = {56},
 Number = {1},
 Pages = {43--59},
 Year = {2013},
 Language = {English},
 DOI = {10.1007/s10898-012-9996-y},
 zbMATH = {6176118},
 Zbl = {1295.90103}
}

@Article{PapaQuiro2009,
 Author = {Papa Quiroz, Erik A. and Oliveira, Paolo Roberto},
 Title = {Proximal point methods for quasiconvex and convex functions with {Bregman} distances on {Hadamard} manifolds},
 FJournal = {Journal of Convex Analysis},
 Journal = {J. Convex Anal.},
 ISSN = {0944-6532},
 Volume = {16},
 Number = {1},
 Pages = {49--69},
 Year = {2009},
 Language = {English},
 URL = {www.heldermann.de/JCA/JCA16/JCA161/jca16003.htm},
 zbMATH = {5586177},
 Zbl = {1176.90361}
}

@Article{Bregma1967,
 Author = {Bregman, L. M.},
 Title = {The relaxation method of finding the common point of convex sets and its application to the solution of problems in convex programming},
 FJournal = {Zhurnal Vychislitel'no{\u{\i}} Matematiki i Matematichesko{\u{\i}} Fiziki},
 Journal = {Zh. Vychisl. Mat. Mat. Fiz.},
 ISSN = {0044-4669},
 Volume = {7},
 Pages = {620--631},
 Year = {1967},
 Language = {Russian},
 zbMATH = {3296905},
 Zbl = {0186.23807}
}

@Article{Kiwiel1997,
 Author = {Kiwiel, Krzysztof C.},
 Title = {Proximal minimization methods with generalized {Bregman} functions},
 FJournal = {SIAM Journal on Control and Optimization},
 Journal = {SIAM J. Control Optim.},
 ISSN = {0363-0129},
 Volume = {35},
 Number = {4},
 Pages = {1142--1168},
 Year = {1997},
 Language = {English},
 DOI = {10.1137/S0363012995281742},
 zbMATH = {1116127},
 Zbl = {0890.65061}
}

@Article{Bauschke1997,
 Author = {Bauschke, Heinz H. and Borwein, Jonathan M.},
 Title = {Legendre functions and the method of random {Bregman} projections},
 FJournal = {Journal of Convex Analysis},
 Journal = {J. Convex Anal.},
 ISSN = {0944-6532},
 Volume = {4},
 Number = {1},
 Pages = {27--67},
 Year = {1997},
 Language = {English},
 zbMATH = {1046019},
 Zbl = {0894.49019}
}

@article{Bergmann2024,
    AUTHOR = {Bergmann, R. and  Ferreira, O.P. and  Santos, E. M. and Souza, J. C. O.},
     TITLE = {The Difference of Convex Algorithm on {H}adamard Manifolds},
   JOURNAL = {J. Optim. Theory Appl.},
  FJOURNAL = {Journal of Optimization Theory and Applications},
    VOLUME = {201},
      YEAR = {2024},
    NUMBER = {1},
     PAGES = {221--251},
      ISSN = {0022-3239},
     CODEN = {JOTABN},
   MRCLASS = {90C48 (49M30 58E25)},
  MRNUMBER = {1622188 (99f:90180)},
       DOI = {10.1007/s10957-024-02392-8},

}

@Article{Bento2015,
 Author = {Bento, G. C. and Ferreira, O. P. and Oliveira, P. R.},
 Title = {Proximal point method for a special class of nonconvex functions on {Hadamard} manifolds},
 FJournal = {Optimization},
 Journal = {Optimization},
 ISSN = {0233-1934},
 Volume = {64},
 Number = {2},
 Pages = {289--319},
 Year = {2015},
 Language = {English},
 DOI = {10.1080/02331934.2012.745531},
 zbMATH = {6417950},
 Zbl = {1310.49006}
}

@Article{Bento2010,
 Author = {Bento, G. C. and Ferreira, O. P. and Oliveira, P. R.},
 Title = {Local convergence of the proximal point method for a special class of nonconvex functions on {Hadamard} manifolds},
 FJournal = {Nonlinear Analysis. Theory, Methods \& Applications. Series A: Theory and Methods},
 Journal = {Nonlinear Anal., Theory Methods Appl., Ser. A, Theory Methods},
 ISSN = {0362-546X},
 Volume = {73},
 Number = {2},
 Pages = {564--572},
 Year = {2010},
 Language = {English},
 DOI = {10.1016/j.na.2010.03.057},
 zbMATH = {5720185},
 Zbl = {1282.49023}
}

@article{FerreiraLouzeiroPrudente2018subgrad,
    AUTHOR = {Ferreira, O. P. and Louzeiro, M. S. and Prudente, L. F.},
     TITLE = {Iteration-complexity of the subgradient method on {R}iemannian
              manifolds with lower bounded curvature},
   JOURNAL = {Optimization},
  FJOURNAL = {Optimization. A Journal of Mathematical Programming and
              Operations Research},
    VOLUME = {68},
      YEAR = {2019},
    NUMBER = {4},
     PAGES = {713--729},
      ISSN = {0233-1934},
   MRCLASS = {90C25 (90C60)},
  MRNUMBER = {3937053},
       DOI = {10.1080/02331934.2018.1542532},
      URL = {https://doi.org/10.1080/02331934.2018.1542532},
}

@Misc{Pham1986,
 Author = {Pham Dinh Tao and El Bernoussi, Souad},
 Title = {Algorithms for solving a class of nonconvex optimization problems. {Methods} of subgradients},
 Year = {1986},
 Language = {English},
 HowPublished = {Fermat days 85: {Mathematics} for optimization, {Toulouse}/{France} 1985, {North}-{Holland} {Math}. {Stud}. 129, 249-271 (1986).},
 zbMATH = {4041643},
 Zbl = {0638.90087}
}

@article{lewis2024,
  title={Horoballs and the subgradient method},
  author={Lewis, Adrian S and Lopez-Acedo, Genaro and Nicolae, Adriana},
  journal={arXiv preprint arXiv:2403.15749},
  year={2024}
}

@article{ghadimi2021,
  title={Hyperbolic {B}usemann learning with ideal prototypes},
  author={Ghadimi Atigh, Mina and Keller-Ressel, Martin and Mettes, Pascal},
  journal={Advances in Neural Information Processing Systems},
  volume={34},
  pages={103--115},
  year={2021}
}

@inproceedings{bonet2023-2,
  title={{S}liced-{W}asserstein on symmetric positive definite matrices for M/EEG signals},
  author={Bonet, Cl{\'e}ment and Mal{\'e}zieux, Beno{\i}t and Rakotomamonjy, Alain and Drumetz, Lucas and Moreau, Thomas and Kowalski, Matthieu and Courty, Nicolas},
  booktitle={International Conference on Machine Learning},
  pages={2777--2805},
  year={2023},
  organization={PMLR}
}

@inproceedings{bonet2023,
  title={Hyperbolic sliced-{W}asserstein via geodesic and horospherical projections},
  author={Bonet, Cl{\'e}ment and Chapel, Laetitia and Drumetz, Lucas and Courty, Nicolas},
  booktitle={Topological, Algebraic and Geometric Learning Workshops 2023},
  pages={334--370},
  year={2023},
  organization={PMLR}
  }

@article{hirai2023,
  title={Convex analysis on {H}adamard spaces and scaling problems},
  author={Hirai, Hiroshi},
  journal={Foundations of Computational Mathematics},
  pages={1--38},
  year={2023},
  publisher={Springer}
}

@Article{Bento2022,
 Author = {Bento, Glaydston de Carvalho and Cruz Neto, Jo{\~a}o Xavier and Melo, {\'I}talo Dowell Lira},
 Title = {Combinatorial convexity in {Hadamard} manifolds: existence for equilibrium problems},
 FJournal = {Journal of Optimization Theory and Applications},
 Journal = {J. Optim. Theory Appl.},
 ISSN = {0022-3239},
 Volume = {195},
 Number = {3},
 Pages = {1087--1105},
 Year = {2022},
 Language = {English},
 DOI = {10.1007/s10957-022-02112-0},
 zbMATH = {7619003}
}

@article {Bento2023,
    AUTHOR = {Bento, Glaydston de C. and Cruz Neto, Jo\~{a}o and Melo,
              \'{I}talo Dowell L.},
     TITLE = {Fenchel conjugate via {B}usemann function on {H}adamard
              manifolds},
   JOURNAL = {Appl. Math. Optim.},
  FJOURNAL = {Applied Mathematics and Optimization},
    VOLUME = {88},
      YEAR = {2023},
    NUMBER = {3},
     PAGES = {Paper No. 83, 29},
      ISSN = {0095-4616,1432-0606},
   MRCLASS = {90C25 (53C18 53C20 53C80 90C26)},
  MRNUMBER = {4646400},
       DOI = {10.1007/s00245-023-10060-y},
       URL = {https://doi.org/10.1007/s00245-023-10060-y},
}

@Article{BentoNeto2024,
 Author = {Bento, Glaydston de C. and Neto, Jo{\~a}o X. Cruz and Lopes, Jurandir O. and Melo, {\'I}talo D. L. and da Silva Filho, Pedro},
 Title = {A new approach about equilibrium problems via {Busemann} functions},
 FJournal = {Journal of Optimization Theory and Applications},
 Journal = {J. Optim. Theory Appl.},
 ISSN = {0022-3239},
 Volume = {200},
 Number = {1},
 Pages = {428--436},
 Year = {2024},
 Language = {English},
 DOI = {10.1007/s10957-023-02356-4},
 zbMATH = {7794721}
}

@book{Ballmann1995,
  author    = {Ballmann, Werner},
  title     = {Lectures on Spaces of Nonpositive Curvature},
  series    = {DMV Seminar},
  volume    = {25},
  publisher = {Birkh{\"a}user},
  address   = {Basel},
  year      = {1995},
}

@Book{Ballmann1985,
 Author = {Ballmann, Werner and Gromov, Mikhael and Schroeder, Viktor},
 Title = {Manifolds of nonpositive curvature},
 FSeries = {Progress in Mathematics},
 Series = {Prog. Math.},
 ISSN = {0743-1643},
 Volume = {61},
 Year = {1985},
 Publisher = {Birkh{\"a}user, Cham},
 Language = {English},
 zbMATH = {3949191},
 Zbl = {0591.53001}
}

@Book{Busemann1955,
 Author = {Busemann, H.},
 Title = {The geometry of geodesics},
 FSeries = {Pure and Applied Mathematics (Academic Press)},
 Series = {Pure Appl. Math., Academic Press},
 ISSN = {0079-8169},
 Volume = {6},
 Year = {1955},
 Publisher = {Academic Press, New York, NY},
 Language = {English},
 zbMATH = {3182580},
 Zbl = {0112.37002}
}

@Book{Bourbaki1995,
 Author = {Bourbaki, N.},
 Title = {General Topology},
 Year = {1955},
 Publisher = {Springer Berlin Heidelberg},
 Language = {English},
 doi = {10.1007/978-3-642-61701-0},
}

@Article{NetoFerreiraPerez2002,
 Author = {da Cruz Neto, J. X. and Ferreira, O. P. and Lucambio P{\'e}rez, L. R.},
 Title = {Contributions to the study of monotone vector fields},
 FJournal = {Acta Mathematica Hungarica},
 Journal = {Acta Math. Hung.},
 ISSN = {0236-5294},
 Volume = {94},
 Number = {4},
 Pages = {307--320},
 Year = {2002},
 Language = {English},
 DOI = {10.1023/A:1015643612729},
 zbMATH = {1822370},
 Zbl = {0997.53012}
}

@Article{ChenTeboulle1993,
 Author = {Chen, Gong and Teboulle, Marc},
 Title = {Convergence analysis of a proximal-like minimization algorithm using {Bregman} functions},
 FJournal = {SIAM Journal on Optimization},
 Journal = {SIAM J. Optim.},
 ISSN = {1052-6234},
 Volume = {3},
 Number = {3},
 Pages = {538--543},
 Year = {1993},
 Language = {English},
 DOI = {10.1137/0803026},
 zbMATH = {436624},
 Zbl = {0808.90103}
}

@Article{dePierroIusem1996,
 Author = {de Pierro, A. R. and Iusem, A. N.},
 Title = {A relaxed version of {Bregman}'s method for convex programming},
 FJournal = {Journal of Optimization Theory and Applications},
 Journal = {J. Optim. Theory Appl.},
 ISSN = {0022-3239},
 Volume = {51},
 Pages = {421--440},
 Year = {1986},
 Language = {English},
 DOI = {10.1007/BF00940283},
 zbMATH = {3930732},
 Zbl = {0581.90069}
}

@Article{NetoFerreiraLucambio2000,
 Author = {da Cruz Neto, J. X. and Ferreira, O. P. and Lucambio P{\'e}rez, L. R.},
 Title = {Monotone point-to-set vector fields},
 FJournal = {Balkan Journal of Geometry and its Applications (BJGA)},
 Journal = {Balkan J. Geom. Appl.},
 ISSN = {1224-2780},
 Volume = {5},
 Number = {1},
 Pages = {69--79},
 Year = {2000},
 Language = {English},
 zbMATH = {1509511},
 Zbl = {0978.90076}
}

@article {WangLiWangYao2015,
    AUTHOR = {Wang, Xiangmei and Li, Chong and Wang, Jinhua and Yao,
              Jen-Chih},
     TITLE = {Linear convergence of subgradient algorithm for convex
              feasibility on {R}iemannian manifolds},
   JOURNAL = {SIAM J. Optim.},
  FJOURNAL = {SIAM Journal on Optimization},
    VOLUME = {25},
      YEAR = {2015},
    NUMBER = {4},
     PAGES = {2334--2358},
      ISSN = {1052-6234},
   MRCLASS = {90C25 (53C21 65K05)},
  MRNUMBER = {3425379},
MRREVIEWER = {Wim van Ackooij},
       DOI = {10.1137/14099961X},
       URL = {http://dx.doi.org/10.1137/14099961X},
}

@article {LiMordukhovich2011,
    AUTHOR = {Li, Chong and Mordukhovich, Boris S. and Wang, Jinhua and Yao,
              Jen-Chih},
     TITLE = {Weak sharp minima on {R}iemannian manifolds},
   JOURNAL = {SIAM J. Optim.},
  FJOURNAL = {SIAM Journal on Optimization},
    VOLUME = {21},
      YEAR = {2011},
    NUMBER = {4},
     PAGES = {1523--1560},
      ISSN = {1052-6234,1095-7189},
   MRCLASS = {49J52 (90C31)},
  MRNUMBER = {2869507},
MRREVIEWER = {Dariusz\ Zagrodny},
       DOI = {10.1137/09075367X},
       URL = {https://doi.org/10.1137/09075367X},
}

@article {Todd2002,
 Author = {Nesterov, Yu. E. and Todd, M. J.},
 Title = {On the {Riemannian} geometry defined by self-concordant barriers and interior-point methods.},
 FJournal = {Foundations of Computational Mathematics},
 Journal = {Found. Comput. Math.},
 ISSN = {1615-3375},
 Volume = {2},
 Number = {4},
 Pages = {333--361},
 Year = {2002},
 Language = {English},
 DOI = {10.1007/s102080010032},
 URL = {hdl.handle.net/1813/9170},
 zbMATH = {1890491},
 Zbl = {1049.90127}
}

@article {Ferreira2002,
    AUTHOR = {Ferreira, O. P. and Oliveira, P. R.},
     TITLE = {Proximal point algorithm on {R}iemannian manifolds},
   JOURNAL = {Optimization},
  FJOURNAL = {Optimization. A Journal of Mathematical Programming and
              Operations Research},
    VOLUME = {51},
      YEAR = {2002},
    NUMBER = {2},
     PAGES = {257--270},
      ISSN = {0233-1934,1029-4945},
   MRCLASS = {49M30 (90C26 90C48)},
  MRNUMBER = {1928039},
MRREVIEWER = {R.\ Tichatschke},
       DOI = {10.1080/02331930290019413},
       URL = {https://doi.org/10.1080/02331930290019413},
}

@book {BridsonHaefliger1999,
    AUTHOR = {Bridson, Martin R. and Haefliger, Andr\'{e}},
     TITLE = {Metric spaces of non-positive curvature},
    SERIES = {Grundlehren der mathematischen Wissenschaften [Fundamental
              Principles of Mathematical Sciences]},
    VOLUME = {319},
 PUBLISHER = {Springer-Verlag, Berlin},
      YEAR = {1999},
     PAGES = {xxii+643},
      ISBN = {3-540-64324-9},
   MRCLASS = {53C23 (20F65 53C70 57M07)},
  MRNUMBER = {1744486},
MRREVIEWER = {Athanase\ Papadopoulos},
       DOI = {10.1007/978-3-662-12494-9},
       URL = {https://doi.org/10.1007/978-3-662-12494-9},
}

@book {Absil2008,
    AUTHOR = {Absil, P.-A. and Mahony, R. and Sepulchre, R.},
     TITLE = {Optimization algorithms on matrix manifolds},
      NOTE = {With a foreword by Paul Van Dooren},
 PUBLISHER = {Princeton University Press, Princeton, NJ},
      YEAR = {2008},
     PAGES = {xvi+224},
      ISBN = {978-0-691-13298-3},
   MRCLASS = {90-02 (58E17 90C30 90C52)},
  MRNUMBER = {2364186},
MRREVIEWER = {Anders\ Linn\'{e}r},
       DOI = {10.1515/9781400830244},
       URL = {https://doi.org/10.1515/9781400830244},
}

@book {Sakai1996,
    AUTHOR = {Sakai, Takashi},
     TITLE = {Riemannian geometry},
    SERIES = {Translations of Mathematical Monographs},
    VOLUME = {149},
      NOTE = {Translated from the 1992 Japanese original by the author},
 PUBLISHER = {American Mathematical Society, Providence, RI},
      YEAR = {1996},
     PAGES = {xiv+358},
      ISBN = {0-8218-0284-4},
   MRCLASS = {53-01 (53-02)},
  MRNUMBER = {1390760},
MRREVIEWER = {Conrad Plaut},
       DOI = {10.1090/mmono/149},
       URL = {https://doi.org/10.1090/mmono/149},
}

@book {Rapcsak1997,
AUTHOR = {Rapcs\'{a}k, Tam\'{a}s},
TITLE = {Smooth nonlinear optimization in {$\bold R^n$}},
SERIES = {Nonconvex Optimization and its Applications},
VOLUME = {19},
PUBLISHER = {Kluwer Academic Publishers, Dordrecht},
YEAR = {1997},
PAGES = {xiv+374},
ISBN = {0-7923-4680-7},
MRCLASS = {90C30 (65K05 90-02)},
MRNUMBER = {1480415},
MRREVIEWER = {Hubertus Th. Jongen},
DOI = {10.1007/978-1-4615-6357-0},
URL = {https://doi.org/10.1007/978-1-4615-6357-0},
}

@article {Kristaly2016,
    AUTHOR = {Krist\'{a}ly, Alexandru and Li, Chong and L\'{o}pez-Acedo, Genaro and
              Nicolae, Adriana},
     TITLE = {What do `convexities' imply on {H}adamard manifolds?},
   JOURNAL = {J. Optim. Theory Appl.},
  FJOURNAL = {Journal of Optimization Theory and Applications},
    VOLUME = {170},
      YEAR = {2016},
    NUMBER = {3},
     PAGES = {1068--1074},
      ISSN = {0022-3239},
   MRCLASS = {53C23 (53C21 53C24)},
  MRNUMBER = {3531813},
MRREVIEWER = {Jen-Chih Yao},
       DOI = {10.1007/s10957-015-0780-2},
       URL = {https://doi.org/10.1007/s10957-015-0780-2},
}

@book {Udriste1994,
    AUTHOR = {Udri\c{s}te, Constantin},
     TITLE = {Convex functions and optimization methods on {R}iemannian
              manifolds},
    SERIES = {Mathematics and its Applications},
    VOLUME = {297},
 PUBLISHER = {Kluwer Academic Publishers Group, Dordrecht},
      YEAR = {1994},
     PAGES = {xviii+348},
      ISBN = {0-7923-3002-1},
   MRCLASS = {49K27 (52A41 58C05 65K10 90C48)},
  MRNUMBER = {1326607},
MRREVIEWER = {Robert L. Foote},
       DOI = {10.1007/978-94-015-8390-9},
       URL = {https://doi.org/10.1007/978-94-015-8390-9},
}

@book {Boumal2020,
    AUTHOR = {Boumal, Nicolas},
     TITLE = {An introduction to optimization on smooth manifolds},
 PUBLISHER = {Cambridge University Press, Cambridge},
      YEAR = {2023},
     PAGES = {xviii+338},
      ISBN = {978-1-009-16617-1; 978-1-009-16615-7},
   MRCLASS = {90-01 (49J27 53B21 58Exx 90Cxx)},
  MRNUMBER = {4533407},
}

@book {Ratcliffe2019,
    AUTHOR = {Ratcliffe, John G.},
     TITLE = {Foundations of hyperbolic manifolds},
    SERIES = {Graduate Texts in Mathematics},
    VOLUME = {149},
   EDITION = {Third},
 PUBLISHER = {Springer, Cham},
      YEAR = {2019},
     PAGES = {800},
      ISBN = {978-3-030-31597-9; 978-3-030-31596-2},
   MRCLASS = {57M50 (20H10 30F40 57K32)},
  MRNUMBER = {4221225},
       DOI = {10.1007/978-3-030-31597-9},
       URL = {https://doi.org/10.1007/978-3-030-31597-9},
}

@book {BenedettiPetronio1992,
    AUTHOR = {Benedetti, Riccardo and Petronio, Carlo},
     TITLE = {Lectures on hyperbolic geometry},
    SERIES = {Universitext},
 PUBLISHER = {Springer-Verlag, Berlin},
      YEAR = {1992},
     PAGES = {xiv+330},
      ISBN = {3-540-55534-X},
   MRCLASS = {57M50 (30F40 30F60 51M10 57N10)},
  MRNUMBER = {1219310},
MRREVIEWER = {Colin C. Adams},
       DOI = {10.1007/978-3-642-58158-8},
       URL = {https://doi.org/10.1007/978-3-642-58158-8},
}

@book {Manfredo1992,
    AUTHOR = {do Carmo, Manfredo Perdig{\~a}o},
     TITLE = {Riemannian geometry},
    SERIES = {Mathematics: Theory \& Applications},
      NOTE = {Translated from the second Portuguese edition by Francis
              Flaherty},
 PUBLISHER = {Birkh\"auser Boston Inc.},
   ADDRESS = {Boston, MA},
      YEAR = {1992},
     PAGES = {xiv+300},
      ISBN = {0-8176-3490-8},
   MRCLASS = {53-01},
  MRNUMBER = {MR1138207 (92i:53001)},
MRREVIEWER = {Bang-yen Chen},
}

@misc{bao2024symcl,
title={Sym{CL}: Riemannian Contrastive Learning on the Symmetric Positive Definite Manifold for Visual Classification},
author={Yusheng Bao and Rui Wang and Tianyang Xu and Xiaojun Wu and Josef Kittler},
year={2024},
url={https://openreview.net/forum?id=SLUr06QUuw}
}

@INPROCEEDINGS{Tibermacine2024,
  author={Tibermacine, Ahmed and Tibermacine, Imad Eddine and Zouai, Meftah and Rabehi, Abdelaziz},
  booktitle={2024 International Conference on Telecommunications and Intelligent Systems (ICTIS)}, 
  title={{EEG} Classification Using Contrastive Learning and {R}iemannian Tangent Space Representations}, 
  year={2024},
  volume={},
  number={},
  pages={1-7},
  doi={10.1109/ICTIS62692.2024.10894645}
}

@article{manopt,
    author = {Boumal, N. and Mishra, B. and Absil, P.-A. and Sepulchre, R.},
    journal = {Journal of Machine Learning Research},
    title = {{M}anopt, a {M}atlab Toolbox for Optimization on Manifolds},
    year = {2014},
    number = {42},
    pages = {1455--1459},
    volume = {15},
    url = {https://www.manopt.org},
}
\end{document}